\newlength{\halfbls}\setlength{\halfbls}{.5\baselineskip}
\DeclareRobustCommand{\SkipTocEntry}[9]{}
\newtheorem{Defi}{Definition}[section]   \newtheorem{definition}[Defi]{Definition}
   \newtheorem{Prop}[Defi]{Proposition} \newtheorem{proposition}[Defi]{Proposition}
\newtheorem{Lemma}[Defi]{Lemma}     
\newtheorem{Thm}[Defi]{Theorem}  \newtheorem{theorem}[Defi]{Theorem}
   \newtheorem{mainthm}{Theorem} 
  \newtheorem{Ex}[Defi]{Example}
\theoremstyle{definition}
 \newtheorem{remark}[Defi]{Remark} 
\newcommand{\CC}{\mathbb{C}}
\newcommand{\PP}{\mathbb{P}} 
\newcommand{\QQ}{\mathbb{Q}} 
\newcommand{\RR}{\mathbb{R}} 
\newcommand{\R}{\mathbb{R}}
\newcommand{\ZZ}{\mathbb{Z}}
\def\Z{\ZZ} 
\newcommand{\N}{\mathbb{N}}
\newcommand{\frako}{\mathfrak{o}}
\newcommand{\frakS}{\mathfrak{S}}
\newcommand{\frakM}{\mathfrak{M}}
 \newcommand{\cXX}{{\mathcal X}}
 \newcommand{\cOO}{{\mathcal O}}
 \newcommand{\cRR}{{\mathcal R}}
\newcommand{\calA}{\mathcal{A}}
\newcommand{\calD}{\mathcal{D}}
\newcommand{\calO}{\mathcal{O}}
\newcommand{\calM}{\mathcal{M}}
\newcommand{\calMbar}{\overline{\mathcal{M}}}
\newcommand{\Mbar}{\overline{M}}
\newcommand{\calI}{\mathcal{I}}
\newcommand{\calX}{\mathcal{X}}
\newcommand{\calY}{\mathcal{Y}}
\newcommand{\calXbar}{\overline{\mathcal{X}}}
\newcommand{\calH}{\mathcal{H}}
\newcommand{\calR}{\mathcal{R}}
\newcommand{\bfp}{{\bf p}}
\newcommand{\bfs}{{\bf s}}
\newcommand{\bfx}{{\bf x}}
\newcommand{\Aut}{{\rm Aut}}
\newcommand{\Rat}{{\rm Rat}}
\newcommand{\Div}{{\rm Div}}
\newcommand{\PDiv}{{\rm PDiv}}
\newcommand{\Hom}{{\rm Hom}}
\newcommand{\ord}{{\rm ord}}
\newcommand{\an}{{\rm an}}
\newcommand{\val}{{\rm val}}
\newcommand{\trop}{{\rm trop}}
\renewcommand{\div}{{\rm div}}
\DeclareMathOperator{\Res}{Res}
\DeclareMathOperator{\Sym}{Sym}
\newcommand{\calDiv}{\mathcal{D}iv}
\newcommand{\calDivbar}{\overline{\mathcal{D}iv}}
\DeclareMathOperator{\Spec}{Spec}
\DeclareMathOperator{\characteristic}{char}
\newcommand{\Divbar}{\overline{Div}}
\newcommand{\PGL}{PGL} 
\DeclareMathOperator{\mdeg}{mdeg}
\newcommand{\twd}{twisted differential\xspace}
\def\={\;=\;}  \def\+{\,+\,}
\newcommand{\ol}{\overline}
\newcommand{\aG}{\Gamma}                  
\newcommand{\oG}{{\overline{\Gamma}}}
\newcommand{\eG}{{{\Gamma}^+}}            
\newcommand{\lG}{{\overline{\Gamma}}}     
\def\be{\begin{equation}}   \def\ee{\end{equation}}     \def\bes{\begin{equation*}}    \def\ees{\end{equation*}}
\def\ba{\be\begin{aligned}} \def\ea{\end{aligned}\ee}   \def\bas{\bes\begin{aligned}}  \def\eas{\end{aligned}\ees}
\newcommand{\proj}{{\mathbb P}}
\newcommand{\moduli}[1][g]{{\mathcal M}_{#1}}
\newcommand{\omoduli}[1][g]{{\Omega \mathcal M}_{#1}}
\newcounter{savedtocdepth}
\newcommand*{\SaveTocDepth}[1]{%
  \addtocontents{toc}{%
    \protect\setcounter{savedtocdepth}{\protect\value{tocdepth}}%
    \protect\setcounter{tocdepth}{#1}%
  }%
}
\title{Realizability of tropical canonical divisors}
\begin{document}

\author{Martin M\"oller}
\address{
Institut f\"ur Mathematik, Goethe--Universit\"at Frankfurt,
Robert-Mayer-Str. 6--8,
60325 Frankfurt am Main, Germany
}
\email{moeller@math.uni-frankfurt.de}
  
\author{Martin Ulirsch}
\address{Department of Mathematics\\ 
University of Michigan\\
530 Church Street\\
Ann Arbor, MI 48109\\
USA}
\email{ulirsch@umich.edu}

\author{Annette Werner}
\address{
Institut f\"ur Mathematik, Goethe--Universit\"at Frankfurt,
Robert-Mayer-Str. 6--8,
60325 Frankfurt am Main, Germany
}
\email{werner@math.uni-frankfurt.de}
\begin{abstract}
We use recent results by Bainbridge-Chen-Gendron-Gru\-shevsky-M\"oller on compactifications of strata of abelian differentials to give a comprehensive solution to the realizability problem for effective tropical canonical divisors in equicharacteristic zero. Given a pair $(\Gamma, D)$ consisting of a stable tropical curve $\Gamma$ and a divisor $D$ in the canonical linear system on $\Gamma$, we give a purely combinatorial condition to decide whether there is a smooth curve $X$ over a non-Archimedean field whose stable reduction has $\Gamma$ as its dual tropical curve together with a effective canonical divisor $K_X$ that specializes to $D$. Along the way, we develop a moduli-theoretic framework to understand Baker's specialization of divisors from algebraic to tropical curves as a natural toroidal tropicalization map in the sense of Abramovich-Caporaso-Payne.
\end{abstract}

\maketitle
\tableofcontents
\noindent
\SaveTocDepth{1} 

\section*{Introduction}

The \emph{realizability problem} in tropical geometry is a metaproblem that underlies many of the successful applications of tropical geometry to other areas of mathematics. It asks whether for a given  synthetically defined tropical object, there exists an analogous algebraic geometric object whose tropicalization is precisely the given tropical object. 

The realizability problem for divisors is, in general, notoriously difficult, see e.g.
\cite[Section~10]{BakerJensen}. In this article we solve it for effective
canonical divisors using recent results on the compactification of strata of abelian differentials in~\cite{bcggm}. 

\subsection*{Realizability of tropical canonical divisors} 
Let $(\Gamma, D)$ be a tuple  consisting of an (abstract) stable tropical
curve~$\Gamma$ and a divisor $D$ in the canonical linear system~$|K_\Gamma|$
on~$\Gamma$.
Does there exist a smooth curve $X$ together with a stable degeneration $\calX$ 
as well as an effective canonical divisor $K_X$ on~$X$ such that the following two conditions hold: 
\begin{itemize}
\item the tropical curve given by the metrized weighted dual graph 
of the irreducible components in the special fiber of $\calX$ 
is~$\Gamma$; and
\item the specialization of~$K_X$, i.e.\ the multidegree of the
special fiber of the closure of $K_X$ in a suitably chosen semistable model of~$\calX$,
is equal to~$D$. 
\end{itemize}
If that is the case, we say the pair $(\Gamma, D)$
is \emph{realizable}. 
\par
Our main theorem gives an exhaustive answer to this question 
over an algebraically closed field of characteristic~$0$. To state it,
recall that an
element in the tropical canonical series differs from the distinguished
element~$K_\Gamma$ by the divisor of a piecewise affine function~$f$ on~$\Gamma$
with integral slopes. We declare the support of $\div(f)$ to be vertices
of $\Gamma$ and add to the graph~$\Gamma$ legs at the support of  $\div(f)$,
according to the local multiplicity. Now we simply use the value of such a
function~$f$ to define an order among the vertices of~$\Gamma$ (making it 
into a \emph{level graph}). Finally, we provide each half-edge of~$\Gamma$
with an enhancement consisting of the (outgoing) slope of~$f$. The
resulting object is called the \emph{enhanced level graph} $\Gamma^+(f)$
associated with~$f$. 
\par
Section~\ref{sec:twd} explains the algebro-geometric origin of this notion.
The correspondence between rational functions and decorations on~$\Gamma$
is explained in Section~\ref{sec:descR}. In particular, we introduce the notion
of an \emph{inconvenient vertex}~$v$. A vertex $v \in \Gamma^+(f)$
of genus $0$ is inconvenient if it has, roughly speaking, an edge with a 'large'
positive decoration. For example, trivalent vertices where two edges
have decoration less than~$-1$ are always inconvenient.
\par
\begin{mainthm}\label{thm_realizabilitylocus}
Given a tropical curve $\Gamma$ and an element $D = K_\Gamma + \div(f)$
in the tropical canonical linear series in $\Gamma$, the pair $(\Gamma, D)$ is realizable if and only if the following two
conditions hold: 
\begin{itemize}
\item[i)] For every inconvenient vertex~$v$ of $\Gamma^+(f)$
there is a simple cycle $\gamma \subset \Gamma$ based at~$v$ 
that does not pass through any node at a level smaller than~ $f(v)$.
\item[ii)] For every horizontal edge~$e$ there is a simple cycle 
$\gamma \subset \Gamma$ passing though~$e$ which
does not pass through any node at a level smaller than $f(e)$.
\end{itemize}
\end{mainthm}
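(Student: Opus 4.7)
The plan is to reduce the realizability of $(\Gamma, D)$ to the existence of a twisted differential on a stable curve with dual graph $\Gamma$ whose order data is encoded by $\Gamma^+(f)$ and which satisfies the global residue condition (GRC) of~\cite{bcggm}. Given such a twisted differential, the smoothing theorem of~\cite{bcggm} will produce a one-parameter family of holomorphic differentials on smooth curves degenerating to it; its generic fiber lives over a non-Archimedean field and, via the toroidal tropicalization framework developed earlier in the paper, realizes $(\Gamma,D)$. Conversely, any realization $(X,K_X)$ yields, by taking stable reduction and the limit of the differential defining $K_X$, such a twisted differential. After this reduction the theorem becomes a combinatorial equivalence: the existence of residues on the components of $\Gamma^+(f)$ satisfying GRC is equivalent to conditions~(i) and~(ii).

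For the necessity direction I would fix a realization and its associated twisted differential $\{\omega_v\}$. At an inconvenient vertex $v$ of genus~$0$, the large positive enhancement on one edge corresponds to a node where $\omega_v$ has a zero of high order; combined with the residue theorem on $\mathbb{P}^1$, this forces the residues of $\omega_v$ at the remaining nodes to vanish. The GRC, applied at a suitable cut at level $f(v)$, then demands that residue contributions from other components balance out, which is possible only if there is a cycle through $v$ staying weakly above level~$f(v)$; this is exactly condition~(i). The analogous analysis at a horizontal edge $e$, where the two residues on the two sides of the node are opposite, yields condition~(ii).

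For the sufficiency direction I would build a candidate twisted differential vertex by vertex: on each $v$ I choose a meromorphic differential with zero/pole orders at the marked half-edges prescribed by $\Gamma^+(f)$; such differentials exist because the order profile sums to $2g_v-2$ by construction. One then has to choose residues at the nodes so as to satisfy the GRC. At non-inconvenient vertices the residues can be chosen with some freedom (modulo the residue theorem), whereas at inconvenient vertices they are forced to vanish; condition~(i) provides, for each inconvenient vertex, a cycle along which a compensating residue flow can be distributed, and condition~(ii) supplies analogous freedom across horizontal edges. A linear-algebra argument on the space of residue assignments, parametrized by the cycle space of $\Gamma$ and graded by levels, should show that the GRC constraints can be simultaneously satisfied precisely when the two cycle conditions hold. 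Applying the BCGGM smoothing theorem then produces the desired family.

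The hardest part, I expect, will be the combinatorial translation between GRC and conditions~(i) and~(ii): one has to reinterpret GRC, which is phrased in terms of level-wise residue sums on connected components of sublevel graphs, in terms of the cycle space of $\Gamma$, while keeping careful track of the joint behavior of inconvenient vertices and horizontal edges, since both force residue vanishings that must be compensated through cycles. A secondary technical step is verifying, via the moduli-theoretic framework developed in earlier sections, that the specialization of $K_X$ along the constructed family coincides with $D$ on~$\Gamma$; this amounts to identifying slopes of $f$ with orders of the twisted differential at the nodes, which is built into the definition of $\Gamma^+(f)$.
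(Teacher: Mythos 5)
Your high-level strategy --- reduce realizability of $(\Gamma,D)$ to the existence of a twisted differential compatible with $\Gamma^+(f)$ via the two directions of the BCGGM theorem, and then translate the residue constraints into the cycle conditions --- is the same as the paper's. But the proposal inverts the key fact on which the whole translation rests. At an inconvenient vertex $v$ (genus $0$, no simple poles, an index satisfying \eqref{eq:micond}) the residues of $\eta_v$ are \emph{not} forced to vanish; on the contrary, by the Gendron--Tahar description of the image of the residue map (Proposition~\ref{prop:ressurj}, part (iii)) the all-zero residue tuple is \emph{excluded}, so some residue at a node of $v$ is necessarily non-zero; similarly the two matching residues at a horizontal edge are automatically non-zero, being residues at simple poles. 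This forced non-vanishing is precisely why conditions i) and ii) appear: the cycles are used to prescribe non-zero residues at these places that cancel in pairs inside each connected component of the graph above the relevant level, so that the GRC still holds. If, as you claim, residues at inconvenient vertices were forced to vanish, the zero assignment would satisfy the GRC trivially and condition i) would be superfluous, so your argument cannot isolate the stated criterion. Relatedly, your sufficiency sketch never invokes the statement that a differential with the prescribed zero/pole orders \emph{and} prescribed residues exists on each component: this is exactly Proposition~\ref{prop:ressurj}, it is where ``inconvenient'' enters, and at vertices carrying only simple poles it forces the prescribed residues to be $\RR$-linearly independent --- hence the paper's generic choice of the cycle weights $\alpha_i,\beta_j$, which your ``linear-algebra argument on residue assignments'' would have to reproduce.

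There is a second gap in the necessity direction. Applying the GRC at the single level $f(v)$ only shows that the connected component of $C_{>f(v)}$ receiving the non-zero residue meets level $f(v)$ at some \emph{other} node, which may lie on a different vertex; it does not by itself yield a simple cycle based at $v$ (resp.\ through the horizontal edge) avoiding levels below $f(v)$. The paper does not argue combinatorially from the GRC here: it uses density of discretely valued points together with continuity and closedness of $\trop_\Omega$, takes the scaling-limit twisted differential of the family, proves the slope identification by the local plumbing computation at a node $xy=t^a$ (this is not ``built into the definition of $\Gamma^+(f)$'', contrary to your last remark), and then runs a vanishing-cycle/Stokes argument on nearby smooth fibers --- essentially revisiting the proof of the GRC --- to produce a non-separating cycle through $v$, resp.\ through the horizontal node, at levels $\geq f(v)$. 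Finally, your sufficiency argument also needs the reduction to rational (after rescaling, integral) edge lengths, since the smoothing theorem produces families over $\CC[[t]]$ with integer scaling parameters and the choice $n_e=|e|$; the paper handles this, and the passage from $\CC$ to an arbitrary algebraically closed field of characteristic zero, by separate (short but necessary) arguments.
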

\par
This theorem implies in particular that the canonical divisor~$K_\Gamma$
on~$\Gamma$  is in general not realizable (see Example~\ref{ex:dumbbell} below).
Note that $K_\Gamma$ is always the tropicalization of some (non-effective)
canonical divisor by \cite[Remark~4.21]{bakerSpec}. 

\subsection*{The realizability locus in the tropical Hodge bundle}
In \cite{LinU} Lin and the second author of this article synthetically constructed a tropical analogue $\PP\omoduli[g]^{trop}$ of the projective Hodge bundle. Set-theoretically it parametrizes isomorphism classes of pairs $(\Gamma, D)$ where $\Gamma$ is a stable tropical curve of genus $g$ and $D$ is an element of the canonical linear system on $\Gamma$. By \cite[Theorem~1.2]{LinU} it canonically carries the structure of a generalized (rational polyhedral) cone complex. We denote by $\PP\omoduli[g]^{an}$ the Berkovich analytic space
associated to $\PP\omoduli[g]$ in the sense of \cite{Ulirsch_tropisquot}.
 There is a natural tropicalization map 
\begin{equation*}
\trop_{\Omega}\,\colon\,\PP\omoduli[g]^{an}\longrightarrow \PP\omoduli[g]^{trop}
\end{equation*}
that sends an element in $\PP\omoduli[g]^{an}$, represented by a pair
$(X,K_X)$
consisting of a smooth algebraic curve $X$ over a non-Archimedean extension of
the base field and a canonical divisor $K_X$ on $X$, to the point
consisting of the dual tropical curve $\Gamma$ of a stable reduction~$\calX$
of~$X$ together with specialization of~$K_X$ to~$\Gamma$ (see
Section~\ref{section_tropHodge} below for details). 
\par
Theorem \ref{thm_realizabilitylocus} thus gives a complete characterization of the elements in the so-called \emph{realizability locus}, the image of $\trop_\Omega$ in $\PP\omoduli[g]^{trop}$.
\par
\medskip
In general, by the classical Bieri-Groves Theorem (see \cite[Theorem~A]{BieriGroves} and \cite[Theorem~2.2.3]{EinsiedlerKapranovLind}) the tropicalization of a
subvariety of a split algebraic torus is a rational polyhedral complex of the same
dimension. The Hodge bundle does not embed into an algebraic torus but rather in
a suitably defined toroidal compactification 
(in the sense of \cite{KKMSD_toroidal}).
Consequently, we know in this case a priori only that the dimension of the
realizability
locus is bounded above by $4g-4$ by \cite[Theorem~1.1]{Ulirsch_tropcomp}.
Our methods allow us to prove the following stronger result.  

\begin{mainthm}\label{thm_dimension}
The realizability locus in $\PP\omoduli[g]^{trop}$ naturally has the structure of a generalized rational polyhedral cone complex of pure dimension $4g-4$. 
\end{mainthm}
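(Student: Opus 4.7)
The plan is to realize the realizability locus as the set-theoretic image of a natural morphism of generalized cone complexes coming from the BCGGM compactification of the projectivized Hodge bundle, and to control its dimension via the stratification of that compactification. The upper bound $\dim \leq 4g-4$ is already supplied by \cite[Theorem~1.1]{Ulirsch_tropcomp} once we know that $\mathbb{P}\Omega\mathcal{M}_g$ embeds into a suitable toroidal compactification of dimension $4g-4$.

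The first step is to observe that the BCGGM incidence variety compactification $\mathbb{P}\overline{\Omega\mathcal{M}}_g$ carries a canonical stratification indexed by equivalence classes of enhanced level graphs $\Gamma^+$; the stratum corresponding to $\Gamma^+$ has codimension equal to the number of edges plus the number of non-zero levels of $\Gamma^+$. Since this compactification is toroidal, Abramovich-Caporaso-Payne together with the analytification/tropicalization framework of \cite{Ulirsch_tropisquot} identify the tropicalization of $\mathbb{P}\Omega\mathcal{M}_g^{an}$ with a generalized rational polyhedral cone complex whose cones are indexed by $\Gamma^+$, each of dimension equal to the codimension of the corresponding stratum. In particular the deepest strata (dimension zero, corresponding to maximally degenerate twisted differentials on trivalent graphs with the maximal number of levels) give cones of dimension exactly $4g-4$.

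Next I would construct a forgetful morphism of generalized cone complexes from this tropicalization of $\mathbb{P}\overline{\Omega\mathcal{M}}_g$ to the tropical Hodge bundle $\mathbb{P}\Omega\mathcal{M}_g^{trop}$ of \cite{LinU}. It sends a point $(\Gamma^+, \ell)$, consisting of an enhanced level graph with edge lengths and level spacings, to the pair $(\Gamma, D)$ where $\Gamma$ is the underlying metric graph and $D = K_\Gamma + \div(f)$ for the piecewise affine function $f$ reconstructed from the levels and outgoing slopes (as developed in Sections~\ref{sec:twd} and~\ref{sec:descR}). Combining this with the construction of $\trop_\Omega$ in Section~\ref{section_tropHodge} shows that the realizability locus is exactly the image of this morphism, hence inherits a generalized rational polyhedral cone complex structure whose cones are the images of the cones $\sigma_{\Gamma^+}$.

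Finally, for the pure dimension statement I need to check that every realizable $(\Gamma, D)$ lies in the image of a $4g-4$-dimensional cone $\sigma_{\Gamma^+}$. Given a realizable pair, Theorem~\ref{thm_realizabilitylocus} produces an enhanced level graph $\Gamma^+$ whose image is $(\Gamma,D)$; one then refines $\Gamma^+$ by subdividing edges, introducing additional intermediate levels, and splitting high-genus vertices into trivalent configurations in such a way that the inconvenient-vertex and horizontal-edge cycle conditions of Theorem~\ref{thm_realizabilitylocus} are preserved. The resulting maximally degenerate $\Gamma'^+$ specializes back to $\Gamma^+$ and corresponds to a $0$-dimensional, hence codimension-$(4g-4)$, boundary stratum of $\mathbb{P}\overline{\Omega\mathcal{M}}_g$. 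I expect the main obstacle to be precisely this purity step: checking combinatorially that every realizable $(\Gamma,D)$ admits such a top-dimensional refinement, which amounts to verifying that the cycle conditions from Theorem~\ref{thm_realizabilitylocus} are robust under edge subdivision and vertex-splitting, and that the smoothability statements from \cite{bcggm} guarantee the resulting principal strata are non-empty on each required component.
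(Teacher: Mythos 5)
There is a genuine gap, in two places. First, your construction of the cone complex structure leans on the assertion that the BCGGM incidence variety compactification $\PP\obarmoduli[g]$ is toroidal with strata of codimension equal to the number of edges plus the number of non-zero levels of $\Gamma^+$. Neither is available: \cite{bcggm} only provides the characterization of limits (used here as Theorem~\ref{thm:bcggm}), not a toroidal or normal crossings boundary structure, and the stated codimension formula is wrong in any case --- the relevant count on the tropical side is $|{\rm Lev}(\Gamma^+_0)|-1+|E_H(\Gamma^+_0)|$ (Lemma~\ref{le:dimcompute}), i.e.\ levels and \emph{horizontal} edges only, with trees of marked points contracted; a two-level graph with many vertical edges contributes a cone of dimension $1$, not ``number of edges plus one''. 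The paper avoids any appeal to a toroidal structure on $\PP\obarmoduli[g]$: the cone structure on $\PP\cRR_\Omega$ is obtained directly from the combinatorial description by enhanced level graphs satisfying conditions i) and ii) of Theorem~\ref{thm:realizecrit}, inside (a subdivision of) $\PP\omoduli[g]^{trop}$, with the cone over $(G,h,k,\ell)$ parametrized by level depths and horizontal edge lengths as in Section~\ref{sec:algo}. Your upper bound via \cite[Theorem~1.1]{Ulirsch_tropcomp} is acceptable in spirit (the paper mentions it, modulo the self-intersecting boundary caveat), though what is $4g-4$ is the dimension of the subvariety $\PP\omoduli[g]$, not of the ambient toroidal compactification $\calDivbar_{g,2g-2}$; the paper instead deduces the bound from $\dim\frakM^{\rm ab}(\oG)$ and \cite[Theorem~6.1]{kdiff} together with Theorem~\ref{thm:dim}.

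Second, and more seriously, the purity statement --- that every cone of the realizability locus is a face of a realizable cone of dimension exactly $4g-4$ --- is precisely the content you leave as ``the main obstacle''. Your proposed fix (subdivide edges, insert levels, split vertices, and check the cycle conditions survive) is the assertion to be proved, not a proof; in particular it is not clear a priori that one can always increase the number of levels or horizontal edges while keeping a \emph{compatible twisted differential} (the GRC must still be satisfiable) and while not losing realizability. The paper's argument for this step is genuinely geometric: whenever the differentials at a fixed level move in a positive-dimensional projective family, the absence of complete curves in projectivized strata of strictly meromorphic differentials (Theorem~\ref{thm:nocomplete}, \cite{chencomplete}) forces a further degeneration, which increases the number of levels or horizontal edges and hence the cone dimension (Lemma~\ref{le:dimcompute}); iterating, and handling the contracted trees of marked points by the rearrangement of Figure~\ref{cap:deg2} and the genus-splitting of Figure~\ref{cap:deg1} so that conditions i) and ii) of Theorem~\ref{thm:realizecrit} persist, yields the characterization of maximal cones in Proposition~\ref{prop:maxvert} and with it the pure dimension $4g-4$. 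Without an input of this kind (or a complete combinatorial substitute, which the paper alludes to but does not carry out), your argument establishes only the upper bound.
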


The main ingredient in the proofs of both Theorem \ref{thm_realizabilitylocus} and \ref{thm_dimension} is the  description of compactifications of strata of abelian differentials in \cite{bcggm}, which is achieved using the method of plumbing and glueing. So our proof only works in equicharacteristic zero. It would be highly interesting to find an purely algebraic-geometric proof of these results (and the ones in \cite{bcggm}) that generalizes to all characteristics. 
\par
\subsection*{Realizability locus for strata}
The Hodge bundle has a natural stratification by locally closed subsets 
\begin{equation*}
\PP\omoduli[g] \,= \, \bigcup_{m_1,\ldots, m_n}
\PP\omoduli[g](m_1,\ldots, m_n)
\end{equation*}
where the strata parametrize canonical divisors whose support has multiplicity profile $(m_1, \dots, m_n)$ for non-negative integers $m_i$ with $m_1+\cdots + m_n=2g-2$. In our proof we construct a realization of a tropical canonical divisor by an element in the open stratum $\PP\omoduli[g](1,\ldots, 1)^{an}$
of $\PP\omoduli[g]^{an}$. However, our criterion works exactly the
same way for the realizability 
by an element in a fixed stratum $\PP\omoduli[g](m_1,\ldots, m_n)^{an}$.
In Section~\ref{sec:Rstrata} below we discuss this criterion in detail and show, in an example, how it can be applied to study the realizability problem for Weierstrass points in genus $2$. 

\subsection*{Tropicalizing the moduli space of divisors} We denote by
$\calDiv_{g,d}$ the moduli space parametrizing pairs $(X,\widetilde{D})$
consisting of a smooth algebraic curve~$X$  of genus~$g$ and an effective
divisor~$\widetilde{D}$ on~$X$ of degree~$d$, i.e. the symmetric product of
the universal curve over $\calM_g$. In Section~\ref{section_modulitropdivisors},
we define a natural tropical analogue $\Div_{g,d}^{trop}$ which is a
set-theoretic moduli space of pairs $(\Gamma, D)$ consisting of a tropical
curve $\Gamma$ together with an effective divisor $D$ of degree~$d$
on~$\Gamma$. Let $\calDiv_{g,d}^{an}$ be the Berkovich analytic space
associated to $\calDiv_{g,d}$. There is a natural tropicalization map 
\begin{equation*}
\trop_{g,d}\,\colon\, \calDiv_{g,d}^{an}\longrightarrow \Div_{g,d}^{trop}
\end{equation*}
that takes a point in $\calDiv_{g,d}^{an}$ represented by a tuple
$(X,\widetilde{D})$, consisting of a curve~$X$ over a non-Archimedean
extension $K$ of $k$ and a divisor $\widetilde{D}$ of degree $d$, to the dual tropical
curve $\Gamma$ of a stable reduction $\calX$ of $X$ together with the
specialization of $\widetilde{D}$ to~$\Gamma$
(see Section~\ref{section_tropdivisor} for details). 
\par
In Section \ref{section_modulidivisors} below we give a reinterpretation of this procedure by constructing a modular toroidal compactification $\calDivbar_{g,d}$ of $\calDiv_{g,d}$, using a variant of Hassett's \cite{Hassett_weightedstablemoduli} moduli spaces of weighted stable curves and inspired by Caporaso's treatment of the universal Picard variety in \cite{Caporaso_universalPic}. By \cite{Thuillier_toroidal, acp},
associated with the toroidal coordinates at the boundary, there is a natural
strong deformation retraction $\bfp_{g,d}$ onto a \emph{non-Archimedean skeleton} $\frakS_{g,d}$ of $\calDiv_{g,d}^{an}$ that is functorial with respect to toroidal morphism and therefore, in particular, commutes with the forgetful map to $\calM_{g}$. Expanding on the main result of \cite{acp}, we prove in Section~\ref{section_tropdivisor} below the following theorem.
\par
\begin{mainthm}\label{mainthm_tropDiv}
The tropicalization map $\trop_{g,d}\colon \calDiv_{g,d}^{an}\rightarrow \Div_{g,d}^{trop}$ has a natural continuous section $J_{g,d}$ that induces an isomorphism between $\Div_{g,d}^{trop}$ and the non-Archimedean skeleton $\frakS_{g,d}$ of $\calDiv_{g,d}$ that makes the diagram 
\begin{center}
\begin{tikzcd}
\calDiv_{g,d}^{an} \arrow[rd, "\bfp_{g,d}"'] \arrow[rr, "\trop_{g,d}"] &  & \Div_{g,d}^{trop} \arrow[ld,"J_{g,d}", "\cong"'] \\
& \frakS_{g,d} &
\end{tikzcd}
\end{center}
commute. 
\end{mainthm}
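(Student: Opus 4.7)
The plan is to imitate the strategy of Abramovich--Caporaso--Payne \cite{acp} for $\overline{\calM}_g$, now applied to the modular toroidal compactification $\calDivbar_{g,d}$ of $\calDiv_{g,d}$ constructed in Section~\ref{section_modulidivisors}. By Thuillier's machinery, the generalized cone complex structure on the skeleton $\frakS_{g,d}$ and the continuous retraction $\bfp_{g,d}\colon\calDiv_{g,d}^{an}\to\frakS_{g,d}$ are furnished automatically once $(\calDivbar_{g,d},\partial\calDivbar_{g,d})$ is recognized as a toroidal embedding of Deligne--Mumford stacks. Hence the essential content of the theorem is to identify $\frakS_{g,d}$ with $\Div_{g,d}^{trop}$ as generalized cone complexes, and to verify compatibility with $\trop_{g,d}$.

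First I would analyze the boundary stratification of $\calDivbar_{g,d}$. Each open boundary stratum corresponds to a combinatorial type: a pair $(\Gamma,\bfm)$ consisting of a weighted stable dual graph $\Gamma$ together with a distribution $\bfm=(m_v)_{v\in V(\Gamma)}$ of divisor multiplicities among its vertices, augmented where applicable by marked points on the smooth components representing divisor supports. The \'etale-local toric model of $\calDivbar_{g,d}$ at the generic point of such a stratum is governed by the smoothing parameters $t_e$ attached to each node $e\in E(\Gamma)$, together with Hassett-type position coordinates for divisor supports that, in a nearby deformation, move along components across nodes. These match bijectively the generators of the cone parametrizing $(\Gamma,\bfm)$ in $\Div_{g,d}^{trop}$ from Section~\ref{section_modulitropdivisors}: the edge lengths $\ell_e$ of $\Gamma$ and, where necessary, the relative positions of the support of $D$ along edges. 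This produces an isomorphism of cones indexed by $(\Gamma,\bfm)$.

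Next I would assemble these local identifications into a global isomorphism of generalized cone complexes $J_{g,d}\colon\Div_{g,d}^{trop}\xrightarrow{\sim}\frakS_{g,d}$. The face maps of $\Div_{g,d}^{trop}$, induced by edge contractions in $\Gamma$ and by specializations of divisor supports into nodes, correspond bijectively to the closure relations among boundary strata of $\calDivbar_{g,d}$; both sets of gluing data are dictated by the same modular description, so the local identifications are compatible. Automorphisms of combinatorial types act compatibly on both sides, so the isomorphism descends to one of generalized cone complexes. The diagram commutes on the dense open subset parametrizing smooth pairs with good reduction and extends by continuity of all three maps; equivalently, for $x\in\calDiv_{g,d}^{an}$ represented by $(X,\widetilde D)$ over a non-Archimedean extension $K/k$, Thuillier's retraction assigns to $x$ the skeleton point whose coordinates are $\val(t_e)$ along each smoothing direction, and these are precisely the edge lengths of the dual tropical curve and the positions of the points of $D$ that appear in $\trop_{g,d}(x)$.

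The main obstacle is the initial step: establishing that $\calDivbar_{g,d}$ is genuinely toroidal with boundary strata indexed as described. Producing a suitable Hassett-style weighting that keeps the boundary combinatorics in bijection with the combinatorial types $(\Gamma,\bfm)$, and controlling the local charts around strata where divisor supports coincide with nodes, is the chief technical task; once this modular input from Section~\ref{section_modulidivisors} is in place, the remainder of the argument is essentially combinatorial bookkeeping patterned on~\cite{acp}.
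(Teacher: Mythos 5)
Your proposal follows essentially the same route as the paper: its proof of Theorem~\ref{mainthm_tropDiv} (via Theorem~\ref{thm_tropDivgd}) likewise takes the toroidal embedding $\calDiv_{g,d}\hookrightarrow\calDivbar_{g,d}$ of Theorem~\ref{thm_Divbar} as given, indexes the boundary strata by the stable pairs $(G',D)$ with characteristic monoids $\Lambda^+_{(G',D)}\cong\N^{E(G')}$, checks compatibility with weighted edge contractions and the $\Aut(G',D)$-action, identifies $\frakS_{g,d}$ with the colimit $\Div_{g,d}^{trop}$ via \cite[Proposition 6.2.6]{acp}, and verifies commutativity exactly as in your final step by comparing valuations of the node-smoothing parameters with the tropical edge lengths. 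The only cosmetic difference is that in the paper all toroidal coordinates are smoothing parameters of nodes of the semistable model (positions of divisor points along edges being encoded by the subdivided graph $G'$ rather than by separate Hassett-type position coordinates), which does not affect the argument.
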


Theorem \ref{mainthm_tropDiv}, in particular, implies that $\trop_{g,d}$, as defined above, is well-defined and continuous. Moreover, it gives a moduli-theoretic perspective on Baker's specialization map from \cite[Section~2C]{bakerSpec} in the spirit of \cite{acp}. It has often been suggested that the semicontinuity of the Baker-Norine rank for divisors on curves \cite[Corollary~2.11]{bakerSpec} should be connected to the continuity (or flatness) of the process of tropicalization. Theorem~\ref{mainthm_tropDiv} provides one place where we can make the notion of continuity precise. It would be highly interesting to further investigate this connection in more detail.

A moduli-theoretic approach, very similar to the one above, has implicitly appeared in the proof of \cite[Theorem 4.6]{JensenRanganathan_BrillNoetherfixedgonality}, where the authors work with the stacky symmetric product of the universal curve over $\calMbar_g$ (see Remark \ref{remark_JensenRanganathan} for details).

\subsection*{Instances of the realizability problem} 
The realizability problem for divisors (or divisor classes) of a certain fixed rank on tropical curves is central to the many applications of the tropical approach to limit linear series and has recently received a significant amount of attention (see \cite[Section 10]{BakerJensen} and the references therein). It is a crucial element 
in the tropical approach to the maximal rank conjecture for quadrics \cite{JensenPayne_tropicalindependenceII}, which is based on a realizability result coming from \cite{CartwrightJensenPayne_lifting}, as well as in the recents works on the Brill-Noether varieties for curves of fixed gonality \cite{Pflueger_BrillNoetherkgonalcurves,JensenRanganathan_BrillNoetherfixedgonality}. We also highlight \cite{Cartwright_Mnevuniversality}, in which the author shows that this realizability problem fulfills a version of Murphy's Law in the sense of Mn\"ev universality, and \cite{He_smoothinglimitlinearseries}, which connects the classical smoothing problem for limit linear series with the divisor theory on metrized curve complexes of Amini and Baker in \cite{AminiBaker_metrizedcurvecomplexes}.
 
\subsection*{Acknowledgements}  M.M.\ and A.W.\ acknowledge 
support from the LOEWE-Schwerpunkt ``Uniformisierte Strukturen
in Arithmetik und Geometrie''. M.U.\ would like to thank the Max Planck Institute for Mathematics in the Sciences in Leipzig and, in particular, the Non-Linear Algebra Group (with its director Bernd Sturmfels) for hosting him, while the majority of work on this project was undertaken. The authors thank Matt Bainbridge, Matt Baker, Dawei Chen,
Renzo Cavalieri, Quentin Gendron, Sam Grushevsky, Bo Lin, Diane Maclagan, and Dhruv Ranganathan for useful comments and suggestions. We particularly thank Dave Jensen for pointing out missing cases in an earlier version of Example \ref{example_K4}.

\section{Compactifying the moduli space of algebraic divisors}\label{section_modulidivisors}

Fix $k$ to be an algebraically closed field. Let $X$ be a scheme over a $k$. Recall that a Cartier divisor $D$ on $X$ is \emph{effective} if it admits a representation $(U_i, f_i)$ such that $f_i\in\Gamma(U_i, \calO_X)$ and $f_i$ is a non-zero divisor. We may think of $D$ as the closed subscheme of $X$ whose ideal sheaf $I(D)=\calO(-D)$ is invertible and  generated by $f_i$ on $U_i$. 

Given a morphism $X\rightarrow S$ of schemes over $k$, we say that an effective divisor $D$ on $X$ is a \emph{relative effective Cartier divisor} if it is flat over $S$ when regarded as a subscheme of $X$. If $S$ is connected, the function $s\mapsto \deg D_s$ is constant and will be referred to as the \emph{degree of $D$}.

Inspired by both \cite{Caporaso_universalPic} and \cite{Hassett_weightedstablemoduli}, we define the following moduli space.

\begin{definition}
Let $g\geq 2$ and $d\geq 0$. Define $\calDivbar_{g,d}$ to be the fibered category over $\calMbar_g$ whose fiber over a family $\pi\colon X\rightarrow S$ of stable curves is the set of pairs $(X', D)$ consisting of a semistable model $X'$ of $X$, i.e. a semistable curve over $S$ with stabilization $X$,
 and a relative effective Cartier divisor $D$ on $X'$ such that 
\begin{enumerate}[(i)]
\item the support of $D$ does not meet the nodes in each fiber $X'_s$ of $\pi'\colon X'\rightarrow S$; and 
\item the twisted canonical divisor $K_{X'}+D$ is relatively ample.
\end{enumerate}
\end{definition}

Denote by $\calDiv_{g,d}$ the preimage of the locus $\calM_g$ of smooth curves. The goal of this section is to prove  the following Theorem \ref{thm_Divbar}.

\begin{theorem}\label{thm_Divbar}
The fibered category $\calDivbar_{g,d}$ is a Deligne-Mumford stack of dimension $N=3g-3+d$ that is smooth and proper over $k$. Its coarse moduli space $\Divbar_{g,d}$ is projective. The complement of $\calDiv_{g,d}$ in $\calDivbar_{g,d}$ is a divisor with (stack-theoretically) normal crossings and the forgetful morphism $\calDivbar_{g,d}\rightarrow \calMbar_g$
is toroidal.
\end{theorem}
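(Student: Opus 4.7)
My plan is to realize $\calDivbar_{g,d}$ as a natural unordered variant of Hassett's moduli space $\overline{\mathcal{M}}_{g,A}$ of weighted pointed stable curves, and then to transfer each of the assertions (Deligne-Mumford, dimension $3g-3+d$, smoothness, properness, projectivity of the coarse moduli, and normal-crossings/toroidal boundary structure) by combining Hassett's framework with standard properties of relative symmetric products. The comparison runs as follows: over the multiplicity-free open locus, a pair $(X',D)$ is equivalent to an unordered $d$-tuple of distinct smooth points on a semistable curve satisfying Hassett's stability criterion with unit weights, so $\calDivbar_{g,d}$ is the $S_d$-quotient of $\overline{\mathcal{M}}_{g,1^d}$. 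More generally, in an \'etale neighborhood of an arbitrary pair $(X', D = \sum m_i p_i)$, I would construct a chart as the product of the versal deformation space of the semistable model $X'$ (with combinatorial type fixed) and the relative Hilbert scheme of $d$ points supported on the smooth locus of the universal semistable curve.

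From this local description the Deligne-Mumford property, smoothness, and dimension follow at once: deformations of a nodal curve are unobstructed, and the relative Hilbert scheme of $d$ points on a smooth family is smooth of relative dimension $d$, so the local chart is smooth of total dimension $3g-3+d$. Finite automorphisms of the pair $(X',D)$ follow from the ampleness condition (ii), exactly as in the stable-curve or Hassett setting, giving the Deligne-Mumford property.

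Properness via the valuative criterion is the technical heart. Given a family over $\Spec K$ for the fraction field of a discrete valuation ring $R$, I would first take the stable limit $X_0$ of the underlying curves, obtaining a stable model after a finite base change. The task is then to construct a canonical semistable enhancement $X_0'$ of $X_0$ together with an extension of $D$ to a relative effective Cartier divisor on $X_0'$ satisfying both (i) and (ii). I would achieve this by an iterative blow-up procedure: at each node of the central fiber towards which the limit of $D$ would otherwise accumulate, insert a chain of $\PP^1$s whose length is dictated by the local intersection multiplicities, and then use condition (ii) to select a unique minimal such enhancement. I expect this to be the main obstacle of the proof: existence of some semistable enhancement extending $D$ is routine, but the uniqueness needed for separatedness requires a careful local analysis at each accumulation node, and is where the analogy with Caporaso's treatment of the universal Picard variety \cite{Caporaso_universalPic} is essential.

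Projectivity of the coarse moduli $\Divbar_{g,d}$ then follows from the existence of a suitable ample line bundle, for instance a positive combination of the pushforward of $\omega_{X'/S}^{\otimes a}(bD)$ for $a,b\gg 0$, in parallel with Hassett's and Caporaso's projectivity proofs. Finally, for the toroidal and normal-crossings statements, the \'etale-local chart at a boundary pair $(X',D)$ decomposes as a product of one smoothing parameter $t_e$ for each node $e$ of $X'$, together with the smooth divisor-deformation parameters; the boundary is the union of the loci $\{t_e=0\}$, which meet transversally, so the boundary is stack-theoretically normal crossings. The forgetful map to $\calMbar_g$ is given locally by projecting onto those $t_e$ that correspond to nodes of the stabilization, which is a monomial map between toric charts and hence toroidal in the sense of \cite{KKMSD_toroidal}.
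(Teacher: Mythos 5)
There is a genuine gap, and it sits at the foundation of your argument: the global identification of $\calDivbar_{g,d}$ with the $S_d$-quotient of $\calMbar_{g,1^d}$ is false. With unit weights, colliding marked points are forbidden and limits sprout rational bubbles, so already over the interior $\calM_g$ the fiber of $\calMbar_{g,1^d}/S_d$ over a curve $X$ is a Fulton--MacPherson-type blow-up of $\Sym^d X$ along the diagonals, not $\Sym^d X$ itself; moreover its boundary contains rational tails carrying collided markings, which are excluded from $\calDivbar_{g,d}$ by the requirement that $X'$ be a \emph{semistable} model (every rational component has at least two nodes). The paper instead takes Hassett weights $\calA=\epsilon^d$ with $d\epsilon\le 1$, precisely so that up to $d$ sections may coincide without bubbling, and proves (Theorem~\ref{thm_Divgd=Hassett}) that $\calDivbar_{g,d}$ is the \emph{relative coarse moduli space} of $\big[\calMbar_{g,\epsilon^d}/S_d\big]$ over $\calMbar_g$ --- the relative coarsification, not merely the quotient, is needed to kill the residual inertia at coincident sections so that the result is representable over $\calMbar_g$. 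Since every property you want to ``transfer'' rests on your identification, the argument does not get started. Your local chart has a related defect: taking the versal deformation space of $X'$ ``with combinatorial type fixed'' times a relative Hilbert scheme only parametrizes the boundary stratum (no node smoothings), so it is not an \'etale neighborhood and the claims ``DM, smooth, dimension $3g-3+d$ follow at once'' do not follow from it; even dropping the fixed type, such a product ignores the $\GG_m$-automorphisms of the exceptional components of $X'$ (overcounting dimension by their number) and the ampleness condition (ii) couples the two factors, so $\calDivbar_{g,d}$ is not locally such a product.

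The second gap is properness: you explicitly leave the uniqueness/separatedness analysis of the semistable enhancement in the valuative criterion as the ``main obstacle'' without carrying it out, and this is exactly the step the paper never has to face --- properness (and irreducibility) is inherited from $\calMbar_{g,\epsilon^d}$ through \cite[Theorem~3.1]{AbramovichOlssonVistoli_twistedstablemaps} for relative coarse moduli spaces, projectivity of $\Divbar_{g,d}$ comes for free as the finite quotient $\Mbar_{g,\epsilon^d}/S_d$ of a projective scheme, smoothness and the Deligne--Mumford property are proved via the explicit Hilbert-scheme presentation $\widetilde{U}\rightarrow\calDivbar_{g,d}$ whose fibers are open in $\Sym^d X$ away from the nodes, and the normal crossings boundary and toroidality of $\calDivbar_{g,d}\rightarrow\calMbar_g$ come from the computation of complete local rings $\frako_k\llbracket t_1,\ldots,t_N\rrbracket$ with the forgetful map given monomially by $\widetilde{t}_i=t_{i_1}\cdots t_{i_l}$. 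Your toroidal endgame is the same in spirit as this last step, but it presupposes the local structure that your chart does not establish; to repair the proposal you would need to replace the unit-weight identification by the $\epsilon$-weighted one together with the relative coarse moduli construction, at which point you would essentially be reproducing the paper's route.
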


Given a smooth curve $X$ over the field $k$, it is well-known that the restriction of $\calDiv_{g,d}$ to the point $[X]$ in $\calM_g$ is representable by the $d$-th symmetric product $\Sym^d X$ of $X$ (see e.g.\ \cite[Theorem 3.1.3]{Milne_Jacobians}). In this section we generalize this result to all of $\calDivbar_{g,d}$ using a variant of Hassett's moduli spaces of weighted stable curves \cite{Hassett_weightedstablemoduli} that automatically provides us with a compactification of $\calDiv_{g,d}$ with favorable properties. Over $\calM_g$ this specializes to an isomorphism between $\calDiv_{g,d}$ and the relative symmetric product 
\begin{equation*}
\Sym^d \calX_g=\calX_g\times_{\calM_g} \cdots \times_{\calM_g}\calX_g/S_d
\end{equation*}
of the universal curve $\calX_g$ over $\calM_g$. 

\begin{remark}\label{remark_JensenRanganathan}
In the proof of \cite[Theorem 4.6]{JensenRanganathan_BrillNoetherfixedgonality} Jensen and Ranganathan work with the symmetric product $\Sym^d\calXbar_g$ of the compactified universal curve $\calXbar_g$ over $\calMbar_g$ as a compactification of $\calDiv_{g,d}$. This compactification is not smooth and not even toroidal, in general; the authors get around this restriction by working with the stacky symmetric product 
\begin{equation*}
\big[\Sym^d\calXbar_g\big] \= \big[\calX_g^d/S_d\big]
\end{equation*}
whose boundary admits (stack-theoretically) toroidal singularities. Our compactification is a relative coarse moduli space of a toroidal resolution of the singularities of $\big[\Sym^d\calXbar_g\big]$.
\end{remark}

\subsection{Moduli of stable curves with unordered points of small weight}

Let $\calA=(a_1,\ldots, a_d)\in (0,1]\cap \QQ^d$ and $g\geq 0$ such that 
\begin{equation*}
2g-2+a_1+\ldots + a_d>0 \ . 
\end{equation*}
In \cite{Hassett_weightedstablemoduli} Hassett studies the moduli stack $\calMbar_{g,\calA}$ of weighted stable $d$-marked curves of genus $g$. The fiber of the stack $\calMbar_{g,\calA}$ over a scheme $S$ is the groupoid of proper flat morphisms $\pi\colon X\rightarrow S$ such that each geometric fiber is a nodal curve of genus $g$ together with $n$ (marked) sections $s_1,\ldots, s_d$ that fulfill the following two axioms:
\begin{itemize}
\item the sections $s_1,\ldots, s_d$ lie in the smooth locus of $\pi$ and for all $s_{i_1}, \ldots, s_{i_k}$ with non-empty intersections we have $a_{i_1}+\ldots+a_{i_k}\leq 1$; and
\item the $\QQ$-divisor $K_X + a_1 s_1 +\ldots + a_ns_n$ is relatively ample.
\end{itemize}

If $\calA=(1,\ldots,1)$ this is the classical Deligne-Knudsen-Mumford moduli stack $\calMbar_{g,n}$ of $n$-marked stable curves of genus $g$ (see \cite{delignemumford,Knudsen_projectivityII}). By \cite[Theorem~2.1]{Hassett_weightedstablemoduli} the stack $\calMbar_{g,\calA}$ is an irreducible Deligne-Mumford stack that is smooth and proper over $K$ and its coarse moduli space $\Mbar_{g,\calA}$ is projective. Moreover, it is well-known that the complement of the locus of smooth curves $\calM_{g,\calA}$ in $\calMbar_{g,\calA}$ has (stack-theoretically) normal crossings (see e.g. \cite[Theorem~1.1]{Ulirsch_tropHassett}). 

Let $g\geq 2$, $d\geq 0$, and $\calA=(\epsilon,\ldots, \epsilon)=\epsilon^d$ for $\epsilon >0$ such that $d\cdot \epsilon\leq 1$, e.g. $\epsilon=\frac{1}{d}$.
In other words, the sections $s_1,\ldots,s_d$ of an object in $\calMbar_{g,\epsilon^d}$ need not be disjoint, and we require each rational component to have at least two nodes and to contain a marked point if is has precisely two nodes (so that the underlying nodal curve is semistable). 

There is a natural functor $\calMbar_{g,\epsilon^d}\rightarrow \calMbar_g$ that forgets about all sections and contracts all rational components that only have two nodes. Moreover, there is a natural operation of $S_d$ on $\calMbar_{g,\epsilon^d}$ that permutes the sections and the forgetful map $\calMbar_{g,\epsilon^d}\rightarrow \calMbar_g$ factors through the stack quotient $\big[\calMbar_{g,\epsilon^d}/S_d\big]\rightarrow \calMbar_g$. 

\subsection{Relative coarse moduli spaces}

Let  $f\colon\calX\rightarrow\calY$ be a morphism of algebraic stacks that is locally of finite presentation and whose relative inertia stack $\calI(\calX/\calY)=\calX\times_{\calX\times_\calY\calX}\calX$ is finite. By \cite[Theorem~3.1]{AbramovichOlssonVistoli_twistedstablemaps} there is an algebraic stack $X$ together with a factorization
\begin{equation*}
f = \overline{f} \circ \pi\,:\, 
\calX\xlongrightarrow{\pi}X\xlongrightarrow{\overline{f}}\calY
\end{equation*}
with $\overline{f}$ representable that is initial among such factorizations. The algebraic stack~$X$ is called the \emph{relative coarse moduli space} of $\calX$ over $\calY$. We refer the reader to \cite[Section~3]{AbramovichOlssonVistoli_twistedstablemaps} for the construction and basic properties of relative coarse moduli spaces.

\begin{theorem}\label{thm_Divgd=Hassett}
The fibered category $\calDivbar_{g,d}$ is equivalent to the relative coarse moduli space of the stack quotient $\big[\calMbar_{g,\epsilon^d}/S_d\big]$ over $\calMbar_g$.
\end{theorem}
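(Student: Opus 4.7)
The plan is to construct an explicit morphism $\Phi\colon [\calMbar_{g,\epsilon^d}/S_d] \to \calDivbar_{g,d}$ over $\calMbar_g$ and verify that it realizes $\calDivbar_{g,d}$ as the relative coarse moduli space in the sense of \cite[Theorem~3.1]{AbramovichOlssonVistoli_twistedstablemaps}. By that criterion it suffices to show (i) that $\calDivbar_{g,d}\to \calMbar_g$ is representable and (ii) that $\Phi$ induces the coarse moduli space map on every geometric fiber of the forgetful map to $\calMbar_g$.

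To build $\Phi$, first define $\widetilde{\Phi}\colon \calMbar_{g,\epsilon^d}\to \calDivbar_{g,d}$ by sending a weighted stable $d$-pointed family $(\pi\colon X'\to S, s_1,\dots,s_d)$ to the pair $(X', D=[s_1]+\cdots +[s_d])$, where each $[s_i]$ is the relative Cartier divisor cut out by the section. The support of $D$ is disjoint from the nodes by Hassett's smoothness axiom on sections. Relative ampleness of $K_{X'/S}+D$ is a component-by-component check: on an irreducible component $C$ of a geometric fiber of genus $g_C$ with $k$ incident nodes and total marked-point multiplicity $m$ we compute $\deg_C(K_{X'/S}+D)=2g_C-2+k+m$, and semistability of $X'$ combined with Hassett's stability for the weight datum $\epsilon^d$---which forces every rational component to have $k\geq 2$ nodes and $m\geq 1$ when $k=2$---ensures positivity. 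Since $\widetilde{\Phi}$ manifestly sends any permutation of the sections to the same pair $(X',D)$, it is $S_d$-invariant and descends to the desired functor $\Phi$.

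For (i), take any object $(X',D)$ over $S$ with stabilization $X\to S$, and let $\phi\in \Aut(X'/X)$ satisfy $\phi_{\ast} D=D$. Such a $\phi$ must restrict to the identity on every component that survives under stabilization, so it can act nontrivially only on inserted rational bridges $P\simeq \PP^1$, where it lies in the $\GG_m$ stabilizing the two nodes of $P$. Relative ampleness of $K_{X'/S}+D$ forces $D|_P\neq 0$, and since $\mathrm{supp}(D|_P)$ is disjoint from the nodes of $P$, the $\GG_m$-action $\phi|_P$ must fix a smooth point of $P$ and is therefore trivial. Hence $\phi=\mathrm{id}_{X'}$, confirming that $\calDivbar_{g,d}\to \calMbar_g$ is representable.

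For (ii), fix a geometric point $[X]\colon\Spec k\to \calMbar_g$. The two fibers at $[X]$ parametrize the same underlying data, namely a semistable model $X'$ of $X$ together with an unordered multiset of $d$ smooth points satisfying the common ampleness condition, and $\Phi$ restricted to this fiber is a bijection on isomorphism classes. The automorphisms it coarsens are exactly the residual $S_d$-stabilizers permuting sections that share an image, together with the $\GG_m$-automorphisms of rational bridges---both trivialized on the target by~(i). The main technical obstacle is carrying out this fiberwise comparison uniformly in families; we handle it by realizing $\calDivbar_{g,d}$ \'etale-locally on $\calMbar_g$ as the $d$-th relative symmetric product of the smooth locus of a fixed semistable modification of the universal curve, cut out by the open ampleness condition, and then invoking the standard identification of symmetric products with $S_d$-quotients to conclude.
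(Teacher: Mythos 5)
Your overall strategy differs from the paper's, but as written it has genuine gaps. The main one is how you propose to identify $\calDivbar_{g,d}$ with the relative coarse moduli space. Checking that $\Phi$ induces the coarse moduli map on every geometric fiber over $\calMbar_g$ is not a valid criterion: by \cite[Theorem~3.1]{AbramovichOlssonVistoli_twistedstablemaps} the formation of the relative coarse space commutes only with \emph{flat} base change, and the inclusion of a geometric point of the boundary of $\calMbar_g$ is not flat, so a fiberwise comparison does not identify the relative coarse space in families. You acknowledge this and propose to repair it by describing $\calDivbar_{g,d}$ \'etale-locally on $\calMbar_g$ as an open ampleness locus in the $d$-th relative symmetric product of the smooth locus of a \emph{fixed} semistable modification of the universal curve; but this description is false. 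The semistable model $X'$ is part of the data of an object of $\calDivbar_{g,d}$ and varies with the divisor (which nodes acquire rational bridges, and how many, depends on $D$), so no single modification over an \'etale chart captures all objects; moreover the locus you describe is not proper over the chart, whereas the fibers of $\calDivbar_{g,d}$, which contain all the bubbled configurations, are. The paper avoids any such presentation: it shows that an arbitrary $(X',D)$ over $S$ becomes \emph{split} after a finite flat base change $T\to S$, hence lifts to $\calMbar_{g,\epsilon^d}(T)$, and then descends the induced $T$-point of the relative coarse space (which, being representable over $\calMbar_g$, satisfies descent along $T\to S$) to an $S$-point mapping to $(X',D)$.

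Your step (i) also rests on a false claim: an automorphism $\phi$ of $X'$ over its stabilization with $\phi_\ast D=D$ need not fix a smooth point of a bridge, since it only has to preserve the finite multiset $\mathrm{supp}(D|_P)$. For example, if $P\simeq\PP^1$ is a bridge with nodes at $0,\infty$ and $D|_P$ is supported at $1$ and $-1$, then $z\mapsto -z$ is a nontrivial automorphism preserving $(X',D)$. Such automorphisms are exactly the relative inertia of $\big[\calMbar_{g,\epsilon^d}/S_d\big]$ over $\calMbar_g$ that the coarse space is there to kill; if your claim were true, the stack quotient itself would already be representable over $\calMbar_g$, which it is not. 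Since the paper defines $\calDivbar_{g,d}$ with set-valued fibers, the existence of these automorphisms does not contradict the theorem, but it does invalidate your argument for representability and the step in (ii) where these automorphisms are ``trivialized on the target by (i)''; in the paper representability of $\calDivbar_{g,d}\to\calMbar_g$ is a consequence of the identification with the AOV relative coarse space, not an input established by an absence of automorphisms.
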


In other words, the natural map
\begin{equation*}
\big[\calMbar_{g,\epsilon^d}/S_d\big]\longrightarrow \calDivbar_{g,d}
\end{equation*}
makes $\calDivbar_{g,d}$ into the initial object among all factorizations of $\big[\calMbar_{g,\epsilon^d}/S_n\big]\rightarrow \calX\rightarrow \calMbar_g$ such that $\calX\rightarrow \calMbar_g$ is representable. Theorem \ref{thm_Divgd=Hassett} is a global version of the fact that the $d$-fold symmetric product of a smooth curve $X$ represents the space of effective divisors of degree $d$ on $X$. Our proof is an adaption of \cite[Theorem~3.13]{Milne_Jacobians} to this situation. 

\begin{proof}[Proof of Theorem \ref{thm_Divgd=Hassett}]
An object in $\calMbar_{g,\epsilon^d}$ is a family $\pi\colon X'\rightarrow S$ of semistable curves of genus $g$ with $d$ sections $s_1,\ldots, s_d$ (not necessarily disjoint) such that the twisted canonical divisor $K_{X'}+\epsilon s_1+\ldots+\epsilon s_d$ is relatively ample. Then the sum $D=\sum_{i=1}^d s_i$ defines a relative effective divisor of degree $d$ on $X$ whose support does not meet the nodes in each fiber $X'_s$. We call such a relative effective Cartier divisor \emph{split}. The pair $(X',D)$ is stable in the sense that the twisted canonical divisor $K_{X'}+D$ is ample. This defines a functor $\calMbar_{g,\epsilon^d}\rightarrow \calDivbar_{g,d}$ that is invariant under the natural operation of $S_d$ on $\calMbar_{g,\epsilon^d}$ and so we have a natural induced morphism $\big[\calMbar_{g,\epsilon^d}/S_d\big]\rightarrow \calDivbar_{g,d}$ of categories fibered in groupoids over $\calMbar_g$.

Conversely, let $D$ be a relative effective Cartier divisor on a family $\pi\colon X'\rightarrow S$ of semistable curves. Assume first that $D$ is split. Then we may write $D=\sum_{i=1}^d s_i$ for sections $s_1,\ldots, s_d$ (not necessarily disjoint) that do not meet the nodes in each fiber and for which the twisted canonical divisor $K_{X'}+D=K_{X'}+ s_1+\ldots + s_d$ is relatively ample. This implies that the divisor $K_{X'}+\epsilon s_1+\ldots+\epsilon s_d$ is also relatively ample and therefore the tuple $(\pi,s_1,\ldots, s_d)$ defines an object in $\calMbar_{g,\epsilon^d}$. 

In general, when $D$ is not split, we may perform a finite and flat base change $T\rightarrow S$ so that the pullback $D_T$ of $D$ to $\pi'\colon X'_T\rightarrow T$ is split. In this case, both pullbacks of $D'$ to the family $ X'_{T'}\rightarrow T'=T\times_ST$ are split and naturally isomorphic. This induces a (representable) morphism $T\rightarrow\big[\calMbar_{g,\epsilon^d}/S_d\big]$, both of whose restrictions to~$T'$ are naturally equivalent. 

By finite flat descent \cite[Section I, Theorem 2.17]{Milne_etalecohomology}, every faithfully flat morphism of finite type is a strict epimorphism. This means that for every algebraic stack $Z$ with a representable morphism to $\calMbar_g$ the sequence
\begin{equation*}
\Hom_{\calMbar_g}(S,Z)\longrightarrow \Hom_{\calMbar_g}(T,Z)\rightrightarrows\Hom_{\calMbar_g}(T',Z)
\end{equation*}
is exact. If we take~$Z$ to be the relative coarse moduli space of
$\big[\calM_{g,\epsilon^d}/S_d\big]$  we find an object in the relative coarse
moduli space of $\big[\calM_{g,\epsilon^d}/S_d\big]$ that maps to $(X', D)$.
This shows that  $\calDivbar_{g,d}$ is equivalent to the relative coarse moduli
space of $\big[\calM_{g,\epsilon^d}/S_d\big]$. 
\end{proof}

\subsection{Proof of Theorem \ref{thm_Divbar}}

Theorem \ref{thm_Divgd=Hassett} together with \cite[Theorem 3.1]{AbramovichOlssonVistoli_twistedstablemaps} implies that $\calDivbar_{g,d}$ is an irreducible algebraic stack that is proper over $k$.
The universal properties immediately show that the coarse moduli space $\Divbar_{g,d}$ is isomorphic to the geometric invariant theory quotient $\Mbar_{g,\epsilon^d}/S_d$ (i.e. to the coarse moduli space of the stack quotient $\big[\calMbar_{g,\epsilon^d}/S_d\big]$) and therefore it is projective, since $\Mbar_{g,\epsilon^d}$ is.  

We choose an "exhausting" family for $\calMbar_{g,\epsilon^d}$ as in \cite[Section 3.4]{Hassett_weightedstablemoduli}: Set 
\begin{equation*}
L\=\omega_\pi^\nu(\nu\epsilon s_1+\cdots +\nu\epsilon s_d)
\end{equation*}
where $\nu >0$ is the smallest integer such that $\nu\epsilon$ is integral, i.e.
$\nu=d$ if we choose $\epsilon=\frac{1}{d}$. Set $q =\deg(L^{\otimes 3}) \= 3(2g-2+d\epsilon)$ as well as $r=3g-3$. 
Denote by $H_0$ the Hilbert scheme of tuples $(s_1,\ldots s_d)$ in $\PP^r$ and by $H_1$ the Hilbert scheme parametrizing curves~$X$ of genus~$g$ and degree $q$ 
 in~$\PP^r$. 
 Consider the locally closed locus $U\subseteq H_1\times H_0$ 
 of points $\big([X],s_1,\ldots, s_d\big)$ such that 
\begin{itemize}
\item the curve $X$ is reduced and nodal;
\item the sections $s_1,\ldots, s_d$ are contained in the smooth locus of $C$; and
\item $\calO_X(1)=L^{\otimes 3}$. 
\end{itemize}
Notice that there is a natural surjective morphism $U\rightarrow \calMbar_{g,\epsilon^d}$
that is given by forgetting the embedding into $\PP^r$. 

The symmetric group $S_d$ acts on $H_0$ by permuting the entries. The induced
action on $H_1\times H_0$ stabilizes $U$ and we denote the quotient $U/S_d$ 
by~$\widetilde{U}$. The fiber $\widetilde{U}_{[X]}$ over a point~$[X]$ in~$H_1$ is precisely the open subscheme of the Hilbert scheme of points on $X$ parametrizing 
closed subschemes $D$ of $X$ of length~$d$  whose support 
does not meet the nodes of $X$, or equivalently the open subset of the symmetric 
product $\Sym^d X$ whose points do not meet the nodes of~$X$. 

Consider the product $H_1\times \widetilde{H}_0$ where $\widetilde{H}_0$ is the Hilbert scheme of zero-dimensional subscheme of $\PP^r$ of length $d$. The quotient $\widetilde{U}$ is precisely the locus of tuples $(X,D)$ in $H_1\times\widetilde{H}_0$ such that 
\begin{itemize}
\item the curve $X$ is reduced and nodal;
\item the support of $D$ is contained in the smooth locus of $X$; and
\item $\calO_X(1)=L^{\otimes 3}$, where $L=\omega_X(D)$. 
\end{itemize}
The natural map $\widetilde{U}\rightarrow \calDivbar_{g,d}$ is representable and surjective by \cite[Proposition~3.3]{Hassett_weightedstablemoduli}. It is smooth, since every automorphism of $(X,D)$ is induced from a projective automorphisms of $\PP^r$. In fact, we may realize $\calDivbar_{g,d}$ naturally a quotient of $H_1\times \widetilde{H}_0$ by  the natural operation of $\PGL_r$. Since at all geometric points this operation has reduced and finite stabilizers, the diagonal morphism of $\calDivbar_{g,d}$ is unramified and therefore it is a Deligne-Mumford stack. 
\par
Since the symmetric product of a nodal curve is smooth away from the nodes (see e.g. the argument in  \cite[Proposition 3.2]{Milne_Jacobians}) each $\widetilde{U}_{[X]}$ is smooth. This implies that $\widetilde{U}$ is smooth and so is $\calDivbar_{g,d}$. 

Let $\frako_k$ be the field $k$ if $\characteristic k=0$ and otherwise the unique complete regular local with residue field $k$ and maximal ideal generated by $p$ where $\characteristic k=p>0$ (using Cohen's structure theorem). A standard deformation-theoretic argument (in the framework of \cite[Section 3.3]{Hassett_weightedstablemoduli}) shows that the complete local ring of $\calDivbar_{g,d}$ at a closed point $[X',D]$ (with nodes $x_1,\ldots, x_k$) is given by
\begin{equation*}
\widehat{\calO}_{\calDivbar_{g,d}, [X',D]}
\=\frako_k\llbracket t_1,\ldots, t_N\rrbracket
\end{equation*}
where $N=3g-3+n$ and the locus where $x_i$ stays nodal is given by $t_i=0$ for $1\leq i\leq k$. This shows that the complement of the smooth locus $\calDiv_{g,d}$ in $\calDivbar_{g,d}$ has (stack-theoretically) normal crossings.

Denote by $[X]$ in $\calMbar_g$ the stabilization of the curve $[X']$. The stability condition in place can be interpreted as saying that $X'$ has no rational tails and that the nodes of $X'$ have to contract to nodes of $X$. By \cite{delignemumford} the complete local ring of $\calMbar_g$ at the point $[X]$ is given by 
\begin{equation*}
\widehat{\calO}_{\calMbar_{g}, [X]} \=\frako_k\llbracket\widetilde{t}_1,\ldots, \widetilde{t}_{3g-3}\rrbracket
\end{equation*}
where the locus where the singularity $\widetilde{x}_i$ of stays nodal is given by $\widetilde{t}_i=0$ for $1\leq i\leq r$. Suppose now that the nodes $x_{i_1},\ldots, x_{i_l}$ on $X'$ are being contracted to $\widetilde{x}_i$ in $X$. Then the forgetful map $\calDivbar_{g,d}\rightarrow \calMbar_g$ is given by $\widetilde{t}_i=t_{i_1}\cdots t_{i_l}$ on the complete local rings and so it is a
toroidal morphism.

\section{Tropical divisors and their moduli}\label{section_modulitropdivisors}

Let us first introduce tropical curves (see e.g. \cite{MikhICM}). A \emph{metric graph} is an equivalence class of tuples $(G,\vert\cdot\vert)$
consisting of a connected finite graph $G=(V,E)$ together with an edge length function
$\vert\cdot\vert: E(G)\rightarrow \RR_{>0}$. Two such tuples $(G,\vert\cdot\vert)$ and $(G',\vert\cdot\vert')$ are \emph{equivalent}, if there is a common length preserving refinement. We implicitly identify a metric graph, represented by $(G,\vert\cdot\vert)$, with its realization as a metric space, by glueing an interval of length $\vert e\vert$ for every edge $e$ according to the incidences in $G$. 

A \emph{tropical curve} $\Gamma$ is a metric graph together with a  function $h: \Gamma\rightarrow \Z_{\geq 0}$ with finite support. We refer to a tuple
$(G,\vert\cdot \vert)$ as a \emph{model} of $\Gamma$ if it represents $\Gamma$ as a metric graph, and if $h$ is supported on the vertices of $G$. The \emph{genus} of a tropical curve is defined to be 
\begin{equation} \label{eq:genus}
g(\Gamma) \= b_1(\Gamma)+\sum_{p\in \Gamma}h(p) \ .
\end{equation}
A model $G$ of a tropical curve is said to be \emph{semistable}, if for every vertex $v$ of $G$ we have $2h(v)-2+\vert v\vert \geq 0$, where $\vert v\vert$ denotes the valency of the vertex $v$. It is called \emph{stable}, if the above inequality is strict, i.e. if we have $2h(v)-2+\vert v\vert > 0$ for all vertices $v$ of $G$. Notice that, when a tropical curve $\Gamma$ admits a semistable model, its minimal model is necessarily stable. In this case, we call $\Gamma$ \emph{stable}. 

Later we will also use the notion of a \emph{tropical curve~$\Gamma$ with legs} for a tropical curve $\Gamma$ decorated with a collection~$L$ of infinite half-edges, i.e. of legs, emanating from the vertices of $G$. In this case, we modify the definition of \emph{stability}  by also counting the legs, when determining the valency of a vertex. Whenever it is clear from the context, we  refer to a tropical curve with legs simply as a \emph{tropical curve}. 

Let $g\geq 2$. The moduli space $M_g^{trop}$ of stable tropical curves is defined to be the set of isomorphism classes of stable tropical curves (without legs) of genus $g$. By \cite{acp} it has the structure of a \emph{generalized (rational polyhedral) cone complex}, i.e. it arises as a colimit of a diagram of (not necessarily proper) face morphisms of rational polyhedral cones. 

The goal of this section is to construct a moduli space $\Div_{g,d}^{trop}$ 
of tropical divisors of degree~$d$ over $M_g^{trop}$. 

\subsection{Divisors on tropical curves}
A {\em divisor} on a tropical curve~$\Gamma$ is a finite formal sum $D = \sum a_i p_i$ of points $p_i \in \Gamma$ with integral coefficients~$a_i$. We let $\deg(D) = \sum a_i$ be the degree of the divisor and write $D(p)=\sum_{p_i=p}a_i$ for a point $p\in\Gamma$. A divisor~$D$ is said to be {\em effective}, denoted by $D \geq 0$, if $D(p)\geq 0$ for all points $p$ of $\Gamma$. Given a tropical curve $\Gamma$, we denote by $\Div(\Gamma)$ the group of divisors on $\Gamma$. 

A {\em rational function} on $\Gamma$ is a continuous function $f : \Gamma \to \RR$ whose restriction to every edge $\Gamma$ (thought of as an interval $[0,\vert e\vert]$) is a piece-wise linear function whose slopes are integral.
Given a rational function~$f$ on $\Gamma$ and $P \in \Gamma$, we define the order $\ord_P(f)$ of~$f$ at~$P$ to be the sum of the outgoing slopes of~$f$ over all edges emanating from~$p$. This defines a map 
\begin{equation*}\begin{split}
{\rm div}: \Rat(\Gamma) &\longrightarrow \Div(\Gamma)\\
f&\longmapsto \sum_{p\in\Gamma}\ord_p(f) \cdot p
\end{split}\end{equation*} that assigns to any rational function its divisor. The image of the map ${\rm div}$ is called the subgroup $\PDiv(\Gamma) \subset \Div(\Gamma)$ of {\em principal divisors}. The divisors~$D$ and~$D'$ are called {\em equivalent} (denoted by $D \sim D'$) if $D-D' \in \PDiv(\Gamma)$.

We can now define the linear system of a divisor $D$ to be
\begin{equation*}
|D| \= \{D' \in \Div(\Gamma): D\geq 0 \textrm{ and }D \sim D' \} \ .
\end{equation*}
It is convenient to also introduce the tropical analogue
\begin{equation*}
 R(D) \= \{ f \in \Rat(\Gamma): D + {\rm div}(f) \geq 0\}
\end{equation*}
of the global sections of $\cOO(D)$. Note that we can shift any element in $R(D)$ by adding a real number and that $|D| = R(D)/\RR$.
\par
For any divisor~$D$ the space $R(D)$ has the structure of a polyhedral complex
(see e.g.\ \cite[Lemma 1.9]{GaKe}, \cite{MiZh}, and \cite[Proposition~3.2]{LinU}). However,
this polyhedral complex is not equidimensional in general, as we will see
in the case of the canonical linear system in Section~\ref{sec:descR}.

\subsection{Moduli of tropical divisors}

\begin{definition}
Let $g\geq 2$. The moduli space $\Div_{g,d}^{trop}$ is the set of isomorphism classes of tuples $(\Gamma, D)$ consisting of a stable tropical curve of genus $g$ and an effective divisor $D$ on $\Gamma$ of degree $d$. 
\end{definition}

\begin{proposition}\label{prop_Divgd=generalizedconecomplex}
The moduli space $\Div_{g,d}^{trop}$ naturally has the structure of a generalized cone complex of dimension $3g-3+d$.
\end{proposition}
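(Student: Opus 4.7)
The plan is to construct the generalized cone complex structure on $\Div_{g,d}^{trop}$ explicitly as a colimit of rational polyhedral cones indexed by the combinatorial types of pairs $(\Gamma, D)$, in the same spirit as the construction of $M_g^{trop}$ in \cite{acp}.

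First I would enumerate the \emph{combinatorial types}. A combinatorial type is a tuple $\theta = (G, P, \iota, m)$ where $G = (V, E, h)$ is a stable weighted graph of genus $g$, $P$ is a finite set of marks, $\iota$ places each mark either on a vertex of $G$ or inside an edge of $G$ (together with a total order on the marks placed in the interior of the same edge), and $m \colon P \to \ZZ_{>0}$ is a multiplicity function with $\sum_{p\in P} m(p) = d$. Two combinatorial types are equivalent if they differ by a weighted-graph isomorphism of $G$ that preserves the decoration. For fixed $g$ and $d$ there are only finitely many such types up to equivalence, because stability bounds $|E(G)| \le 3g-3$ and $|P| \le d$.

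Second, I would associate to every combinatorial type $\theta$ the cone $\sigma_\theta = \RR_{\geq 0}^{E_\theta}$, where $E_\theta$ denotes the set of edges of the refined graph $\widetilde{G}_\theta$ obtained by subdividing each edge of $G$ at the marks placed in its interior (in the prescribed order). A point in the relative interior of $\sigma_\theta$ encodes a tropical curve $\Gamma$ by assigning edge lengths to $\widetilde{G}_\theta$, together with the effective divisor $D = \sum_{p\in P} m(p) \cdot \iota(p)$ of degree $d$; the finite group $\Aut(\theta)$ acts linearly on $\sigma_\theta$ and I would use the quotient $\sigma_\theta / \Aut(\theta)$ as the cone representing the type $\theta$.

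Third, I would describe the face morphisms. Setting a subset of the coordinates of $\sigma_\theta$ to zero corresponds to a degeneration: collapsing a subdivision piece of an edge moves a mark onto a vertex or merges two adjacent marks into one whose multiplicity is the sum, while collapsing an original edge of $G$ either contracts a loop (increasing the genus function $h$ at the resulting vertex by $1$) or contracts a non-loop edge (combining the two endpoints and adding their genera). In every case the result is a new stable combinatorial type $\theta'$ together with a canonical inclusion $\sigma_{\theta'} \hookrightarrow \sigma_\theta$ as a face, compatible with the $\Aut$-actions. The generalized cone complex $\Div_{g,d}^{trop}$ is then defined to be the colimit of this diagram, and its set of points is tautologically in bijection with isomorphism classes of pairs $(\Gamma, D)$.

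For the dimension, observe that $\dim \sigma_\theta = |E_\theta| = |E(G)| + k_\theta$, where $k_\theta$ is the number of marks placed in edge interiors. Stability forces $|E(G)| \le 3g-3$, with equality when $G$ is trivalent and all $h(v) = 0$, and $k_\theta \le |P| \le d$, with equality when every mark has multiplicity one and lies in the interior of its own edge. Hence $\dim \sigma_\theta \le 3g-3+d$, and the bound is attained, giving the cone complex the asserted dimension. The main obstacle is the bookkeeping of Step three: one must verify that the face relations fit into a coherent diagram after passing to the $\Aut(\theta)$-quotients and that iterated degenerations commute up to canonical identification, so that the colimit is well-defined. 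An alternative, shorter route (to be carried out once Theorem \ref{thm_Divbar} is available) is to identify $\Div_{g,d}^{trop}$ with the non-Archimedean skeleton of the toroidal pair $(\calDivbar_{g,d}, \calDivbar_{g,d} \setminus \calDiv_{g,d})$ via the Abramovich-Caporaso-Payne machinery, in which case the dimension statement follows directly from $\dim \calDivbar_{g,d} = 3g-3+d$ in Theorem \ref{thm_Divbar}.
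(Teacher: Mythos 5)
Your proof is correct and follows essentially the same route as the paper: the paper indexes its colimit by stable pairs $(G',D)$ with $D$ supported on the vertices of a semistable graph $G'$ (which is exactly your subdivided graph $\widetilde{G}_\theta$), assigns the cone $\RR_{\geq 0}^{E(G')}$ with its $\Aut(G',D)$-action and weighted edge contractions as face morphisms, and gets the dimension from the maximally degenerate types (trivalent, all weights zero, $d$ simple marks in edge interiors) having $3g-3+d$ edges. Your alternative skeleton route is essentially the content of the paper's later Theorem~\ref{thm_tropDivgd} rather than of this proposition.
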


Consider a pair $(G',D)$ consisting of a finite semistable vertex-weighted graph $G'$ of genus $g$ 
 and an effective divisor $D$ on $G'$ of degree $d$ supported on the vertices of $G'$. We say that the pair $(G',D)$ is \emph{stable} if for every vertex $v$ of $G'$ we have $2h(v)-2+\vert v\vert +D(v)>0$. 

\begin{proof}[Proof of Proposition \ref{prop_Divgd=generalizedconecomplex}]
Denote by $J_{g,d}$ the category of stable pairs $(G',D)$ where $G'$ is of genus $g$ and $D$ has degree $d$. 
The morphisms in $J_{g,d}$ are generated by 
\begin{itemize}
\item automorphisms $\phi$ of the weighted graph $G'$ such that $\phi^\ast D=D$; and
\item weighted edge contractions $\pi: (G_1',D_1)\rightarrow (G_2',D_2)$ (i.e. edge contractions for which $g\big(\pi^{-1}(v)\big)=h(v)$ for all vertices $v$ of $G_2'$) that fulfill $\pi_\ast D_1=D_2$. 
\end{itemize}

There is a natural functor from $J_{g,d}$ to the category $\mathbf{RPC}^{face}$ of (rational polyhedral) cones with (not-necessarily proper) face morphisms, given by $(G',D)\mapsto \sigma_{G'}=\R_{\geq 0}^{E(G')}$. Recall that a \emph{face morphism} is a morphism of rational polyhedral cones $\sigma\rightarrow \sigma'$ that induces an isomorphism between $\sigma$ and a (not necessarily proper) face of $\sigma'$; the class of face morphisms, in particular, includes all automorphisms. 

For a fixed $(G',D)$, the open cone $\mathring{\sigma}_{G'}=\R_{>0}^{E(G')}$ parametrizes the space of triples consisting of
\begin{itemize}
\item a tropical curve $\Gamma$ in $M_g^{trop}$,
\item an effective divisor $D$ on $\Gamma$ of degree $d$, and
\item an isomorphism between $G'$ and the unique minimal semistable model of $\Gamma$ whose vertices contain the support of $D$.
\end{itemize}
The automorphism group $\Aut(G',D)$ acts on $\sigma_{G'}=\R_{\geq 0}^{E(G')}$ by permuting the entries of the vectors accordingly and the natural map $\mathring{\sigma}_{G'}\rightarrow \Div_{g,d}^{trop}$ factors through the injection $\mathring{\sigma}_{G'}/\Aut(G',D)\hookrightarrow \Div_{g,d}^{trop}$. Thus the set $\Div_{g,d}^{trop}$ arises as a colimit of the diagram $J_{g,d}\rightarrow \mathbf{RPC}^{face}$ and therefore carries the structure of a generalized cone complex. 

Finally, a maximally degenerate object $(G',D)$ in $J_{g,d}$ with all vertex weights equal to zero, has precisely $3g-3+d$ finite edges. Therefore the dimension of every maximal cone in $\Div_{g,d}^{trop}$ is $3g-3+d$. 
\end{proof}

\begin{remark}\label{remark_DivGammad=polyhedral}
Let $\Gamma$ be a stable tropical curve. The set $\Div_d^+(\Gamma)$ of effective divisors of degree $d$ on $\Gamma$ admits a natural rational polyhedral subdivision that arises by subdividing $\Sym^d\Gamma$ along the folds.

Let $G$ be the unique minimal stable model of $\Gamma$ and denote by $J_{g,d}/G$ the category of triples consisting of a stable pair $(G',D)$ in $J_{g,d}$ and an isomorphism between the stabilization of $G'$ with $G$. This datum induces a functor $J_{g,d}/G\rightarrow \mathbf{RPC}^{face}$ given by $G'\mapsto \sigma_{G'}$ together with a natural transformation to the constant functor $G\mapsto \sigma_G$. The set $\Div_d^+(\Gamma)$ is the preimage of the point $[\Gamma]\in\sigma_G$ in the colimit of all~$\sigma_{G'}$.
\end{remark}
\par
\begin{remark}\label{remark_stackybutok}
Using the language of tropical moduli stacks developed in \cite{CavalieriChanUlirschWise_tropstack}, we may consider the natural moduli functor $\calDiv_{g,d}^{trop}$ that associates to a rational polyhedral cone $\sigma$ the groupoid of semistable pairs $(G',D)$ together with non-zero edge length on $G'$ in the dual monoid $S_\sigma$. The proof of Proposition \ref{prop_Divgd=generalizedconecomplex} actually shows that $\calDiv_{g,d}^{trop}$ is representable by a \emph{cone stack} in the sense of \cite{CavalieriChanUlirschWise_tropstack} (see \cite[Section 3.4]{CavalieriChanUlirschWise_tropstack} for an analogous argument for the moduli stack $\calM_{g,n}^{trop}$ of $n$-marked stable tropical curves of genus $g$).

A stable tropical curve $\Gamma$ of genus $g$ (with real edge lengths) then corresponds to a morphism $\R_{\geq 0}\rightarrow \calM_g^{trop}$. Expanding on \cite[Section 4]{CavalieriChanUlirschWise_tropstack}, one can show that pullback $\R_{\geq 0}\times_{\calM_g^{trop}}\calDiv_{g,d}^{trop}$ is representable by a \emph{cone space} and its fiber over $1\in \R_{\geq 0}$ is exactly the polyhedral decomposition of $\Div_d^+(\Gamma)$ we have considered in Remark  \ref{remark_DivGammad=polyhedral}.
\end{remark}

\begin{remark}\label{remark_tropHassett} Given a tropical curve $\Gamma$ with $n$ legs, a \emph{marking} is an ordering $l_1<\cdots<l_n$ of the legs of $\Gamma$. Let $g\geq 0$ and $\calA=(a_1,\ldots, a_n)\in (0,1]\cap\QQ^n$ such that $2g-2+a_1+\cdots +a_n>0$. Following \cite{CavalieriHampeMarkwigRanganathan_tropHassett, Ulirsch_tropHassett}, we say that a marked tropical curve $\Gamma$ is \emph{stable of weight~$\calA$} if we have 
\begin{equation*}
2h(v)-2+\vert v\vert_E+\vert v\vert_\calA > 0
\end{equation*}
for every vertex $v\in V(G)$. Here $\vert v\vert_E$ denotes the \emph{inner valency} of the vertex $v$, i.e. the number finite edges emanating from $v$ (counting loops twice), and $\vert v\vert_\calA$ denotes the sum $a_{i_1}+\ldots a_{i_k}$, where $l_{i_1},\ldots,l_{i_k}$ are the legs emanating from $v$. 

Denote by $M_{g,\calA}^{trop}$ the space of tropical curves of genus $g$ with $n$ marked legs that are stable of type $\calA$. It has the structure of a generalized cone complex by \cite[Section 3]{Ulirsch_tropHassett}. If $\calA=(\epsilon,\ldots, \epsilon)$ with $\epsilon\leq\frac{1}{n}$, there is a natural morphism
\begin{equation*}\begin{split}
M_{g,\epsilon^d}^{trop}&\longrightarrow \Div_{g,d}^{trop}\\
(\Gamma, l_1<\ldots< l_n)& \longmapsto \big( \Gamma, \sum_{i=1}^n l_i\big)
\end{split}\end{equation*}
of generalized cone complexes, where $\sum_{i=1}^n l_i$ denotes the sum over the vertices the legs are emanating from. 
This map induces a homeomorphism
\begin{equation*}
M_{g,\epsilon^d}^{trop}/S_d\xrightarrow{\sim} \Div_{g,d}^{trop}
\end{equation*}
of the underlying topological spaces. 
\end{remark}

\section{Specialization versus tropicalization}
\label{section_tropdivisor}

Let $k$ be an algebraically closed field~$k$ endowed with the trivial absolute value. 
Denote by $\calDiv_{g,d}^\an$ the non-Archimedean analytification of the moduli space of smooth curves together with an effective divisor of degree $d$. In this section we define a natural tropicalization map 
\begin{equation*}
\trop_{g,d}\,:\, \calDiv_{g,d}^{an}\longrightarrow \Div_{g,d}^{trop}
\end{equation*}
from the Berkovich analytic space $\calDiv_{g,d}^{an}$ to the tropical moduli space $\Div_{g,d}^{trop}$ and show that this map can be identified with a natural strong deformation retraction onto the non-Archimedean skeleton of $\calDiv_{g,d}^{an}$ in the sense of \cite{Thuillier_toroidal}. 

Note that for every algebraic stack $\mathcal{X}$ which is locally of finite type over $k$, there is an associated analytic stack $\mathcal{X}^{\mathrm an}$ defined by pullback with respect to the usual analytification functor on $k$-schemes, see \cite[Definition 2.18]{Ulirsch_tropisquot}. We usually abuse notation and denote by $\mathcal{X}^{\mathrm an}$ the associated topological space as defined in \cite[Definition 3.3]{Ulirsch_tropisquot}. Hence, if $\mathcal{X}$ is a separated algebraic Deligne-Mumford stack, by \cite[Proposition~3.8]{Ulirsch_tropisquot}, the space $\mathcal{X}^{\mathrm an}$ can be identified with  the Berkovich analytification of the coarse moduli space associated to $\mathcal{X}$. 

\subsection{Tropicalization of $\calDiv_{g,d}$} \label{sec:specialization}
We begin by defining the tropicalization map 
\begin{equation*}
\trop_{g,d}\,:\, \calDiv_{g,d}^{an}\longrightarrow \Div_{g,d}^{trop} \ .
\end{equation*}
A point in $\calDiv_{g,d}^\an$ is represented by a proper, smooth algebraic curve~$X$ of genus~$g$ over a field~$K$ that is a non-Archimedean extension of~$k$ together with an effective divisor $D$ on $X$ of degree $d$. Possibly after replacing~$K$ by a finite extension, there is a semistable model $\cXX/S$ of~$X$ over the spectrum~$S$ of the  valuation ring~$R$ of~$K$ together with a relative effective divisor $\calD$ on $\calX$ that does not meet the singularities in the special fiber $\calX_s$ and makes the divisor $K_\calX+\calD$ relatively ample.
Here we use that the moduli stack $\calDivbar_{g,d}$ is proper.
Its special fiber $(\calX_s, \calD_s)$ (as a Cartier divisor) is an element in $\calDivbar_{g,d}(\widetilde{K})$, where $\widetilde{K}$ denotes the residue field of $R$. On the level of points, the tropicalization map 
\begin{equation*}
\trop_{g,d}: \calDiv_{g,d}^\an \longrightarrow \calDiv_{g,d}^\trop 
\end{equation*}
associates to the pair $(X,D)$ the \emph{dual tropical curve}~$\Gamma$ of
 $\calX_s$ together with the \emph{specialization} of $D$ to $\Gamma$, an effective divisor of degree $d$ on $\Gamma$. More precisely, 
the weighted dual graph $G'$ is the incidence graph of $\calX_s$ 
together with the vertex weights $h(v)$ given by
the genus of (the normalization of) the corresponding irreducible component  of~$\calX_s$. The \emph{dual tropical curve} of $\calX$ is the tropical curve with semistable model $G'$ for which the {\em length} $|e|$ of the edge $e \in E(\Gamma)$ is defined to be $\val_R(f)$, where $xy=f$ is the local equation of the node corresponding to~$e$, and where~$\val_R$
denotes the valuation. For $v \in V(\Gamma)$ we denote the normalization of the component $C_v$ of the special fiber $\calX_s$ by $\widetilde{C}_v$. The {\em specialization of $D$ to $\Gamma$} is then defined as the multidegree 
\be
\mdeg(\calD_s) \= \sum_{v \in V(\Gamma)} \deg(\calD_s|_{\widetilde{C}_v}) \cdot [v]
\ee
of the special fiber $\calD_s$ of $\calD$, thought of as a divisor on $\Gamma$ (with support contained in the vertices of $G'$).
\par
The independence of the choices made in this construction is checked 
in~\cite{viv} for the moduli space of stable curves. It also follows 
a posteriori from Theorem~\ref{thm_tropDivgd} below.
\par
\begin{remark}
Instead of working with $(\cXX,\calD)$ as above, we can also work (at least in the case that~$S$ is the spectrum of a discrete
valuation ring) with any semistable model $\widetilde{\cXX}/S$ in which $\calD$ does not meet the singularities of $\calX_s$. The dual graph of the special fiber, metrized as above, is {\em equivalent} to that of~$\cXX/S$ in that it results from a subdivision of edges by a finite number of $2$-valent genus zero vertices. 
\end{remark}
\par
\subsection{Baker's specialization map}\label{section_Bakerspecialization}
 Let $X$ be a smooth curve over a non-Archi\-medean extension $K$ of $k$, and let $D$ be an effective divisor on $X$. Then there is a minimal skeleton $\Gamma$ associated to $X^{\an}$, see  \cite[Section 4.3]{Berkovich_book}, which is a deformation retract of $X^{\an}$, that is, there exists a continuous retraction map $\tau: X^{an}\rightarrow \Gamma$. Denote by $\tau: X(\overline{K}) \rightarrow \Gamma$ its restriction to $X(\overline{K})\subseteq X^{an}$. By linear extension, and using that $\Div_d(X_{\overline{K}}) = \Div_d(X(\overline{K}))$, we may define a homomorphism
\begin{equation*}\begin{split}
\tau_\ast: \Div_d(X_{\overline{K}})&\longrightarrow\Div_d(\Gamma) \\
\sum_ia_i p_i&\longmapsto \sum_i a_i \tau(p_i) \ .
\end{split}\end{equation*}
We also refer the reader to \cite[Section~2C]{bakerSpec} and, in particular, to \cite[Section~6.3]{BakerJensen} for details on this construction.
\par
Suppose now that $D$ is effective. Since $\calDivbar_{g,d}$ is proper, we may find a semistable model~$\calX$ of~$X$ over~$R$ (possibly after replacing $R$ by a finite extension and~$X$ and~$D$ by their base changes) such that the closure $\calD$ of $D$ on $\calX$ does not meet the singularities of the special fiber $\calX_s$ of~$\calX$ (and so that $K_\calX+\calD$ is relatively ample). If the support of $D$ is $K$-rational and $R$ is a discrete valuation ring, then 
the image $\tau_\ast (D)$  coincides with the multidegree of~$\calD_s$ on~$\calX_s$, thought of as a divisor on~$\Gamma$. In other words, 
\begin{equation*}
\trop_{g,d}\big([X],D\big)=\big(\trop_g([X]),\tau_\ast (D)\big) \ .
\end{equation*}
This observation has originally appeared in \cite[Remark 2.12]{bakerSpec} (also see \cite[Section 6.3]{BakerJensen}).

\subsection{The retraction to  the skeleton}\label{sec:retraction}

Let $X_0\hookrightarrow X$ be a \emph{toroidal embedding}, i.e. an open immersion of normal schemes locally of finite type over~$k$ that \'etale locally on $X$ admits an \'etale morphism $\gamma: X\rightarrow Z$ into a $T$-toric variety~$Z$ such that $\gamma^{-1}(T)=X_0$. Moreover, suppose for notational simplicity that $X$ is proper over $k$. 

In \cite{Thuillier_toroidal} Thuillier has constructed a strong deformation retraction 
\begin{equation*}
\bfp_{X_0\hookrightarrow X}: X_0^{an}\longrightarrow X_0^{an}
\end{equation*}
onto a closed subset $\frakS(X_0\hookrightarrow X)$ of~$X_0^{an}$ with the structure of a generalized cone complex, the \emph{non-Archimedean skeleton} of~$X_0$ (defined with respect to the toroidal compactification~$X_0\hookrightarrow X$). We refer the reader to \cite[Section 6]{acp} for a generalization of this construction to separated toroidal Deligne-Mumford stacks. 

In fact, Thuillier's construction in \cite{Thuillier_toroidal} extends to the analytification of the toroidal compactification $X^{an}$ and we obtain a strong deformation retraction to a compactified skeleton $\overline{\frakS}_{X_0\hookrightarrow X}$ of $X^{an}$. The resulting strong deformation retraction is proper and closed as a map $X^{an}\rightarrow \overline{\frakS}_{X_0\hookrightarrow X}$, since $\overline{\frakS}_{X_0\hookrightarrow X}$ is compact. Therefore $\bfp_{X_0\hookrightarrow X}$ is proper and closed as well.

By Theorem \ref{thm_Divbar}, the open immersion $\calDiv_{g,d}\hookrightarrow \calDivbar_{g,d}$ is toroidal and hence there is a natural strong deformation retraction 
\begin{equation*}
\bfp_{g,d}: \calDiv_{g,d}^{an}\longrightarrow\calDiv_{g,d}^{an}
\end{equation*}
onto the skeleton $\frakS_{g,d}$ of $\calDiv_{g,d}^{an}$. Together with the following Theorem \ref{thm_tropDivgd}, this implies Theorem \ref{mainthm_tropDiv} from the introduction.

\begin{theorem}\label{thm_tropDivgd}
There is a natural isomorphism $\Phi_{g,d}: \Div_{g,d}^{trop}\xrightarrow{\sim}\frakS_{g,d}$ that makes the diagram 
\begin{center}
\begin{tikzcd}
\calDiv_{g,d}^{an} \arrow[rd, "\bfp_{g,d}"'] \arrow[rr, "\trop_{g,d}"] &  & \Div_{g,d}^{trop} \arrow[ld,"\Phi_{g,d}", "\cong"'] \\
& \frakS_{g,d} &
\end{tikzcd}
\end{center}
commute. 
\end{theorem}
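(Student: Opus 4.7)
The plan is to deduce Theorem~\ref{thm_tropDivgd} from the general theory of skeletons of toroidal Deligne-Mumford stacks developed by Abramovich-Caporaso-Payne in \cite[Section~6]{acp} (extending Thuillier's construction \cite{Thuillier_toroidal}), applied to the toroidal pair $\calDiv_{g,d}\hookrightarrow \calDivbar_{g,d}$ provided by Theorem~\ref{thm_Divbar}, combined with the local structure of $\calDivbar_{g,d}$ computed there. For any toroidal Deligne-Mumford stack, the skeleton of its non-Archimedean analytification carries the structure of a generalized cone complex whose cones are canonically indexed by the toroidal strata and whose face morphisms correspond to specialization of strata; each cone is the dual cone of the stalk of the characteristic monoid at the corresponding stratum.

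The first substantive step is to identify the strata of $\calDivbar_{g,d}$ combinatorially. Using the local computation $\widehat{\calO}_{\calDivbar_{g,d},[X',D]}=\frako_k\llbracket t_1,\ldots,t_N\rrbracket$ from the proof of Theorem~\ref{thm_Divbar}, in which $t_1,\ldots,t_k$ are the node-smoothing coordinates, the closed stratum through $[X',D]$ is cut out by $t_1=\cdots=t_k=0$ and its associated cone is $\sigma_{G'}=\R_{\geq 0}^{E(G')}$, where $G'$ is the dual weighted graph of~$X'$ with the support of $D$ considered as decoration on the vertices. This puts the strata of $\calDivbar_{g,d}$ into bijection with isomorphism classes of stable pairs $(G',D)$ in the category $J_{g,d}$ of Section~\ref{section_modulitropdivisors}; I would verify that specialization of strata, together with automorphisms, corresponds to the weighted edge contractions, together with automorphisms, in $J_{g,d}$ by unpacking one-parameter families $\calX\to S$ that smooth subsets of nodes. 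This identifies $\frakS_{g,d}$ with the colimit of $J_{g,d}\to \mathbf{RPC}^{face}$, which is exactly $\Div_{g,d}^{trop}$ by Proposition~\ref{prop_Divgd=generalizedconecomplex}, and yields the desired isomorphism $\Phi_{g,d}$.

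To check commutativity of the diagram, I would take a point of $\calDiv_{g,d}^{an}$ represented by $(X,D)$ over a valued extension $K/k$ and, after a finite extension, pass to the semistable model $(\calX,\calD)$ used in the definition of $\trop_{g,d}$ in Section~\ref{sec:specialization}. Thuillier's explicit formula for $\bfp_{g,d}$ restricted to a toroidal stratum, written in the coordinates $t_1,\ldots,t_k$, places $\bfp_{g,d}(X,D)$ in the cone $\sigma_{G'}$ with coordinate $\val_R(f_i)$ along the edge $e_i$, where $xy=f_i$ is the local equation of the node corresponding to~$e_i$. This coincides with the edge length used to build the dual tropical curve in Section~\ref{sec:specialization}, while the combinatorial type $(G',D)$ of the stratum encodes the specialization $\mdeg(\calD_s)$ of~$D$ to the dual graph, so $\Phi_{g,d}(\trop_{g,d}(X,D))=\bfp_{g,d}(X,D)$ as required.

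The main obstacle will be matching the toroidal strata of $\calDivbar_{g,d}$ with $J_{g,d}$ uniformly and verifying that the relative coarse moduli construction of Theorem~\ref{thm_Divgd=Hassett} does not alter the skeleton. The cleanest way around this is to first apply the skeleton construction to $\calMbar_{g,\epsilon^d}$, where the identification of strata with weighted marked graphs and of the skeleton with $M_{g,\epsilon^d}^{trop}$ is essentially the content of \cite[Section~3]{Ulirsch_tropHassett}, and then descend successively along the $S_d$-quotient and the map to the relative coarse moduli space. Both operations commute with taking skeletons, since each only modifies stabilizer groups of geometric points while preserving the local toroidal characteristic monoid, and the resulting identification matches the homeomorphism $M_{g,\epsilon^d}^{trop}/S_d\xrightarrow{\sim}\Div_{g,d}^{trop}$ of Remark~\ref{remark_tropHassett}.
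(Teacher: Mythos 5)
Your proposal follows essentially the same route as the paper: it identifies the toroidal strata of $\calDivbar_{g,d}$ with the stable pairs $(G',D)$ in $J_{g,d}$, applies the Abramovich--Caporaso--Payne/Thuillier skeleton machinery to realize $\frakS_{g,d}$ as the colimit of the cones $\R_{\geq 0}^{E(G')}$ (i.e.\ as $\Div_{g,d}^{trop}$), and checks commutativity by matching the combinatorial type of the stratum containing the reduction with the valuations of the node-smoothing parameters, exactly as in the paper's proof. Even your fallback route via the skeleton of $\calMbar_{g,\epsilon^d}$ and descent along the $S_d$-quotient and relative coarse moduli map is recorded in the paper as a remark immediately after the theorem, so nothing essential is missing.
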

\par
Theorem \ref{thm_tropDivgd} in particular shows that the tropicalization map $\trop_{g,d}$ defined above is well-defined, continuous, proper, and closed.
\par
\begin{proof}[Proof of Theorem \ref{thm_tropDivgd}]
This is yet another version of the main result of \cite{acp}. We begin
by showing that the skeleton $\frakS_{g,d}$ is naturally isomorphic
to~$\Div_{g,d}^{trop}$. 
\par
Since $\calDiv_{g,d}\hookrightarrow \calDivbar_{g,d}$ is a toroidal embedding
by Theorem~\ref{thm_Divbar}, the stack $\calDivbar_{g,d}$ admits a natural
stratification by locally closed substacks (as in \cite[Section~3.1]{Thuillier_toroidal}). The locally closed strata are precisely the locally closed substacks parametrizing pairs $(X',D)$ for which the pair consisting of its weighted dual graph and the multidegree of $D$ is constant. Consequently, there is
a stratum $\calM_{(G',D)}$ for every object $(G',D)$ of $J_{g,d}$
(as in the proof of Proposition~\ref{prop_Divgd=generalizedconecomplex}). 
\par
For a pair $(G',D)$ in $J_{g,d}$ set 
\begin{equation*}
\widetilde{\calDiv}_{(G',D)}\=\prod_{v\in V(G')}\calDiv_{h(v),\vert v\vert, d_v}
\end{equation*}
where $\vert v\vert$ denotes the valency of $v$ in $G'$ and $d_v=D(v)$ denotes the degree of $D$ at the vertex $v$. Here we make use of the stack $\calDiv_{g,n,d}$ of effective divisors over $\calM_{g,n}$, which is constructed as the $d$-fold symmetric product of the universal curve over $\calM_{g,n}$, i.e.\ 
as the relative coarse moduli space of the morphism 
$\big[\calM_{g,1^n,\epsilon^d}/S_d\big]\rightarrow \calM_g$. 

We denote by $\Lambda^+_{(G',D)}$ the monoid of effective Cartier divisors on $\calDivbar_{g,d}$ whose support is contained in the closure of $\calDiv_{(G',D)}$. In other words, $\Lambda^+_{(G',D)}$ is the characteristic monoid of the natural divisorial logarithmic structure on $\calDivbar_{g,d}$ at the generic point of $\calDiv_{(G',D)}$. Notice that $\Lambda_{(G',D)}^+\simeq\N^{E(G')}$, since $E(G')$ is precisely the set of nodes of an element in $\calDiv_{(G',D)}$.

We observe:

\begin{itemize}
\item A stratum $\calM_{(G_1',D_1)}$ is in the closure of another stratum $\calM_{(G_2',D_2)}$ if and only if there is a weighted edge contraction $\pi: G_1'\rightarrow G_2'$ such that $\pi_\ast D_1=D_2$. In this case, the \'etale specialization $\Lambda_{(G_1',D_1)}^+\rightarrow \Lambda_{(G_2',D_2)}$ is given by the projection $\N^{E(G_1')}\rightarrow \N^{E(G_2')}$ and the induced map 
\begin{equation*}
\R_{\geq 0}^{E(G_2')}=\Hom\big(\Lambda_{(G_2',D_2)}^+,\R_{\geq 0}\big)\longrightarrow \Hom\big(\Lambda_{(G_1',D_1)},\R_{\geq 0}\big)=\R_{\geq 0}^{E(G_1')}
\end{equation*}
is precisely the face morphism induced from $\pi: G_1'\rightarrow G_2'$.

\item As in \cite[Proposition 3.4.1]{acp}, we have an equivalence  
\begin{equation*}
\calDiv_{(G',D)}\,\simeq\,\big[\widetilde{\calDiv}_{(G',D)}/\Aut(G',D)\big]
\end{equation*}
and therefore the action of the \'etale fundamental group of $\calDiv_{(G',D)}$ on $\R_{\geq 0}^{E(G')}=\Hom(\Lambda_{(G',D)}^+,\R_{\geq 0})$ surjectively factors through the action of $\Aut(G',D)$ that permutes the entries. 
\end{itemize}

The above observations together with \cite[Proposition 6.2.6]{acp} show that the non-Archimedean skeleton $\frakS_{g,d}$ arises as the colimit 
\begin{equation*}
\varinjlim_{(G',D)\in J_{g,d}}\Hom(\Lambda_{(G',D)}^+,\R_{\geq 0})=\varinjlim_{(G',D)\in J_{g,d}}\R_{\geq 0}^{E(G')}=\varinjlim_{(G',D)\in J_{g,d}} \sigma_{(G',D)}
\end{equation*}
and therefore $\frakS_{g,d}$ is naturally isomorphic to $Div_{g,d}^{trop}$ (as a generalized cone complex). 

Let us now show that above diagram commutes, i.e. that $\bfp_{g,d}=\Phi_{g,d}\circ \trop_{g,d}$. A point $x$ in $\calDiv_{g,d}^{an}$ gives rise to a morphism $S=\Spec R\rightarrow \calDivbar_{g,d}$ and the image of the closed point lies exactly in a stratum $\calDiv_{(G',D)}$. Thus the stable pair $(G',D)$ is the underlying combinatorial type of both $\bfp_{g,d}(x)$ and $\trop_{g,d}(x)$ and so they are in the interior of the same cone $\sigma_{(G',D)}=\R_{\geq 0}^{E(G')}$ in $\frakS_{g,d}\simeq \Div_{g,d}^{trop}$.
 
The edge lengths (of the edges in $G'$) of both of the resulting tropical curves agree, since $\bfp_{g,d}$ is given by taking valuations of the elements in $\Lambda_{(G',D)}^+$ and these precisely correspond to the deformation parameters in the family $\calX$ over $S$, as $\calX$ is the pullback of the universal curve over $\calDivbar_{g,d}$ to $S$. 
\end{proof}

\begin{remark}
Let $0<\epsilon\leq\frac{1}{d}$. On the level of underlying topological spaces, the two analytic stacks $\big[\calM_{g,\epsilon^d}^{an}/S_d\big]$ and $\calDiv_{g,d}^{an}$ are homeomorphic, since they have the same coarse moduli space (using \cite[Proposition 3.9]{Ulirsch_tropisquot}), and in Remark \ref{remark_tropHassett} we have seen that $\Div_{g,d}^{trop}$ is naturally homeomorphic to the quotient $M_{g,\epsilon^d}^{\trop}/S_d$. By \cite[Theorem 1.2]{Ulirsch_tropHassett} we have a natural identification of the non-Archimedean skeleton of $\calM_{g,\epsilon^d}^{an}$ with the tropical moduli space $M_{g,\epsilon^d}^{trop}$ of weighted stable tropical curves. Therefore, since this identification is invariant under the $S_d$-operations on both sides, we may also reduce Theorem \ref{thm_tropDivgd} to this earlier result. 
\end{remark}

\begin{proposition}
The tropicalization naturally commutes with the forgetful map, i.e. we have a commutative diagram
\begin{equation*}\begin{CD}
\calDiv_{g,d}^{an}@>\trop_{g,d}>> \Div_{g,d}^{trop}\\
@VVV @VVV\\
\calM_{g}^{an} @>\trop_g >> M_{g}^{trop} \ .
\end{CD}\end{equation*}
\end{proposition}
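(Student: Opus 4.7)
The plan is to reduce the commutativity to the naturality of the non-Archimedean skeleton construction with respect to toroidal morphisms, using Theorem~\ref{thm_tropDivgd} and its analogue for $\calM_g$ from \cite{acp}.

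First, I would invoke Theorem~\ref{thm_tropDivgd} to rewrite $\trop_{g,d} = \Phi_{g,d}^{-1}\circ \bfp_{g,d}$, where $\bfp_{g,d}$ is Thuillier's strong deformation retraction to the skeleton $\frakS_{g,d}$. By the main result of \cite{acp}, the analogous statement holds for the tropicalization $\trop_g = \Phi_g^{-1} \circ \bfp_g$, where $\bfp_g\colon \calM_g^{an}\to \frakS_g$ is the retraction to the skeleton of $\calM_g^{an}$, and $\Phi_g\colon M_g^{trop}\xrightarrow{\sim} \frakS_g$ is the canonical identification. Thus it suffices to show that the diagram
\begin{equation*}
\begin{CD}
\calDiv_{g,d}^{an} @>\bfp_{g,d}>> \frakS_{g,d} \\
@VVV @VVV \\
\calM_g^{an} @>\bfp_g >> \frakS_g
\end{CD}
\end{equation*}
commutes and that, under the identifications $\Phi_{g,d}$ and $\Phi_g$, the induced map on skeletons agrees with the natural forgetful map $\Div_{g,d}^{trop}\to M_g^{trop}$ sending $(\Gamma, D)$ to the stabilization of the weighted metric graph underlying $\Gamma$.

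For the first claim, I would appeal to the functoriality of Thuillier's retraction with respect to toroidal morphisms of Deligne--Mumford stacks, as developed in \cite[Section~6]{acp}. This applies because the forgetful morphism $\calDivbar_{g,d}\to \calMbar_g$ is toroidal by Theorem~\ref{thm_Divbar}; hence it takes the skeleton of $\calDiv_{g,d}^{an}$ into the skeleton of $\calM_g^{an}$, and intertwines the two retractions.

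For the second claim, I would unwind the identifications $\Phi_{g,d}$ and $\Phi_g$ via the colimit descriptions of the two skeletons. On a given stratum $\calDiv_{(G',D)}$ the toroidal morphism is locally modeled, on characteristic monoids, by the projection $\N^{E(G')}\twoheadrightarrow \N^{E(G)}$ induced by collapsing precisely those edges of $G'$ that are contracted under the stabilization $G'\to G$ that forgets the divisor decoration. Dually, the induced face morphism $\sigma_G = \R_{\geq 0}^{E(G)}\hookrightarrow \R_{\geq 0}^{E(G')} = \sigma_{(G',D)}$ coincides with the inclusion of cones dictated by the combinatorial forgetful functor $J_{g,d}\to J_g$, $(G',D)\mapsto G$ (after stabilization). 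Passing to colimits gives precisely the forgetful map $\Div_{g,d}^{trop}\to M_g^{trop}$, so the identifications $\Phi_{g,d}$ and $\Phi_g$ are compatible with the vertical arrows, as desired.

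The only place where something could go wrong is the matching in the colimit: one has to check that the stabilization step (contracting unstable rational components that appear in $G'$ but not in $G$) is the right combinatorial operation dualizing the toroidal forgetful morphism on characteristic monoids, and that the automorphism groups $\Aut(G',D)\to \Aut(G)$ behave compatibly. This is a direct consequence of the construction of $\calDivbar_{g,d}$ via $\big[\calMbar_{g,\epsilon^d}/S_d\big]$ in Theorem~\ref{thm_Divgd=Hassett}, together with the analysis of the complete local rings carried out in the proof of Theorem~\ref{thm_Divbar}, so no new ingredient is needed.
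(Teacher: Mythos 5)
Your overall strategy (rewrite $\trop_{g,d}=\Phi_{g,d}^{-1}\circ\bfp_{g,d}$ and $\trop_g=\Phi_g^{-1}\circ\bfp_g$, use functoriality of the skeleton under the toroidal morphism $\calDivbar_{g,d}\to\calMbar_g$ from Theorem~\ref{thm_Divbar}, then identify the induced map on skeletons with the combinatorial forgetful map) is a legitimate alternative to the paper's argument, which instead checks commutativity directly on points represented by semistable families over a discrete valuation ring via the local node equations and then concludes by density of such points and continuity of $\trop_g$ and $\trop_{g,d}$. However, your execution of the identification step contains a genuine error. Since the divisor is required to avoid the nodes, the semistable model $X'$ differs from its stabilization $X$ by inserting $2$-valent rational components at nodes; hence $G'$ is a \emph{subdivision} of $G$, no edge of $G'$ is ``collapsed'' to a vertex, and each edge $\widetilde{e}$ of $G$ corresponds to a chain $e_{i_1},\ldots,e_{i_l}$ of edges of $G'$. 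The map on characteristic monoids induced by $\calDivbar_{g,d}\to\calMbar_g$ therefore goes from $\N^{E(G)}$ to $\N^{E(G')}$ and sends the generator of $\widetilde{e}$ to $e_{i_1}+\cdots+e_{i_l}$ (this is exactly the relation $\widetilde{t}_i=t_{i_1}\cdots t_{i_l}$ on complete local rings established in the proof of Theorem~\ref{thm_Divbar}); it is neither a projection $\N^{E(G')}\twoheadrightarrow\N^{E(G)}$ nor dual to a face inclusion $\sigma_G\hookrightarrow\sigma_{(G',D)}$, as you assert. Dualizing the correct monoid map gives a linear \emph{surjection} $\R_{\geq 0}^{E(G')}\to\R_{\geq 0}^{E(G)}$ that sums the lengths of the edges in each chain, and it is precisely this length-summing map that matches the tropical forgetful map $(\Gamma,D)\mapsto\Gamma$. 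With your stated local model the induced map on skeletons would come out wrong (a face inclusion even points in the wrong direction to be the map $\frakS_{g,d}\to\frakS_g$), so the compatibility of $\Phi_{g,d}$ and $\Phi_g$ with the vertical arrows is not established as written.

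The gap is repairable within your framework: replace the projection/face-inclusion claim by the computation of the characteristic monoid map from the local equations $\widetilde{t}_i=t_{i_1}\cdots t_{i_l}$, observe that the dual map on cones adds edge lengths along subdivided edges, and then pass to the colimits over $J_{g,d}$ and the corresponding category for $M_g^{\trop}$. Note that this corrected computation is, in substance, the same local calculation the paper performs directly on the level of valuations ($\val(f)=\val(f_1)+\cdots+\val(f_k)$), so the two routes ultimately rest on the same input; the paper's version avoids invoking skeleton functoriality by combining the pointwise statement with density of discretely valued points and continuity of the tropicalization maps.
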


\begin{proof}
This is in essence the argument in the proof of~\cite[Theorem 1.2.2]{acp}. Let $\calX$ be a stable degeneration of a smooth curve $X$ over a discretely valued non-Archimedean field $K$ and let $\calX'$ be a semistable degeneration of $X$ whose stabilization is equal to $\calX$. Let $xy=f$ be the local equation of a node corresponding to an edge $e$ of the dual tropical curve $\Gamma$ of $\calX$ and let $x_iy_i =f_i$ (for $i=1,\ldots, k$) be equations of the nodes of $\calX$ that lie above the node $xy=f$. Then we have $f=f_1\cdots f_k$ and thus $\val(f)=\val(f_1)+\cdots \val(f_k)$ and so the edges $e_i$ in the dual tropical curve $\Gamma'$ corresponding to $x_iy_i=f_i$ form a subdivision of $e$. 

This proves the commutativity of the above diagram for points that can be represented by a semistable family over a discrete valuation ring. Since these points are dense in $\calDiv_{g,d}^{an}$ and $\calM_{g}^{an}$ respectively, and both $\trop_g$ and $\trop_{g,d}$ are continuous maps, the commutativity of the above diagram follows.
\end{proof}

\begin{remark} Assume that $X$ is a smooth curve over a non-Archimedean and algebraically closed extension $K$ of $k$. Expanding on Section \ref{section_Bakerspecialization}, we may define a specialization map $\tau_\ast: \Div^+_d(X^{an})\rightarrow \Div_d^+(\Gamma)$ and, using an argument analogous to the one in the proof of Theorem \ref{thm_tropDivgd} one can show that $\tau_\ast$ has a natural section $\Phi_{X,d}: \Div_d^+(\Gamma)\rightarrow \Div^+_d(X^{an})$ making the composition
\begin{equation*}
\bfp_{X,d}=\Phi_{X,d}\circ \tau_\ast
\end{equation*}
into a strong deformation retraction onto a closed subset $\frakS_{d}(X)$ of $\Div^+_d(X^{an})$. If $\calX$ is a regular stable model of $X$ over $R$, corresponding to a morphism $S\rightarrow \calMbar_g$ with $S=\Spec R$,  the closed subset $\frakS_{d}(\calX)$ is the non-Archimedean skeleton of $\Div^+_d(X)^{an}$ in the sense of Berkovich \cite[Section 5]{Berkovich_contractibilityI} associated to the regular semistable model $S\times_{\calMbar_g}\calDivbar_{g,d}$ of $\Div_{d}^+(X)$. 
\end{remark}

\section{Tropicalizing the Hodge bundle } \label{sec:troptheHodge}\label{section_tropHodge}

From now one we specialize from general divisors to canonical divisors
and the Hodge bundle. Contrary to the case of algebraic curves, the
canonical linear system on a tropical curve~$\Gamma$ without legs comes with a
distinguished element
\begin{equation*}
K_\Gamma \= \sum_{v \in V(\Gamma)} (2h(v) + |v| -2)\cdot v
\end{equation*}
with support at the vertices of~$\Gamma$. We denote by $|K_\Gamma|$ the
canonical linear series. 
\par
In \cite{LinU} Lin and the second author introduce a tropical analogue of the Hodge bundle $\omoduli[g]$ and of its projectivization $\PP\omoduli[g]$ on the moduli space $M_{g}$. As a set, the \emph{tropical Hodge bundle} $\omoduli[g]^\trop$ is defined to be the set of isomorphism classes of pairs $(\Gamma,f)$ consisting of a stable tropical curve $\Gamma$ of genus $g$ and a rational function $ f\in \Rat(\Gamma)$ with $K_\Gamma + {\rm div}(f) \geq 0$. Its projectivization\footnote{This space was called 
$\calH_g^{trop}$ in \cite{LinU}, and $\omoduli[g]^\trop$ was called  $\Lambda_g^{trop}$ there. Here we mainly follow the notation conventions of \cite{bcggm}.} $\proj\omoduli[g]^\trop$
parametrizes pairs $(\Gamma,D)$ consisting of a stable tropical curve $\Gamma$ of genus $g$ and an effective divisor $D=K_\Gamma+\div(f)$ in $\vert K_\Gamma\vert$.
Both spaces come with a natural forgetful map to $\moduli[g]^\trop$.
\par
\begin{proposition}[\cite{LinU} Theorem~1]\label{prop_Hodgebundle} The tropical Hodge bundle $\PP\omoduli[g]^{trop}$ is a closed subset of $\Div_{g,2g-2}^{trop}$ that canonically carries the structure of a generalized cone complex of (maximal) dimension $5g-5$.
\end{proposition}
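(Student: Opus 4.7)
This proposition is essentially \cite[Theorem~1.2]{LinU}; the new content here is its identification as a closed subcomplex of $\Div_{g,2g-2}^{trop}$. The strategy has three parts. First, I would realize the inclusion $\PP\omoduli[g]^{trop}\hookrightarrow\Div_{g,2g-2}^{trop}$ via the well-defined injection $(\Gamma,[f])\mapsto(\Gamma,K_\Gamma+\div(f))$, whose image is the locus of pairs $(\Gamma,D)$ with $D\sim K_\Gamma$. The generalized cone complex structure on $\PP\omoduli[g]^{trop}$ itself is furnished by \cite[Theorem~1.2]{LinU}, and I would check that its indexing category refines $J_{g,2g-2}$ by recording an integer outgoing slope on each half-edge of a given combinatorial type $(G',D_0)\in J_{g,2g-2}$; forgetting this decoration realizes the inclusion on cones as face morphisms of the two colimit descriptions in Proposition~\ref{prop_Divgd=generalizedconecomplex} and \cite{LinU}.

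The main content is closedness, which I would prove stratum by stratum. Fix $(G',D_0)\in J_{g,2g-2}$ with open cone $\mathring\sigma_{G'}=\RR_{>0}^{E(G')}$. Since $\div(f)=D_0-K_{G'}$ is supported only at vertices, any witness rational function $f$ is affine on each edge of $G'$ with integer slope. Enumerating by the assignment $s$ of an integer outgoing slope to each half-edge of $G'$, the admissible $s$ are characterized by the vertex balance $\sum_{e\text{ outgoing at }v}s_e=D_0(v)-K_{G'}(v)$, and for each such $s$ the existence of a compatible $f$ on the tropical curve with edge lengths $\ell$ reduces to the rational linear cycle condition $\sum_{e\in\gamma}s_e\ell_e=0$ for every cycle $\gamma$ of $G'$. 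Thus $\PP\omoduli[g]^{trop}\cap\sigma_{G'}$ is a countable union of rational linear subspaces intersected with $\sigma_{G'}$, and in particular it is closed inside $\sigma_{G'}$. The principal obstacle is closedness across strata: when $\ell_n\in\mathring\sigma_{G'}$ lies in $\PP\omoduli[g]^{trop}$ and degenerates along an edge contraction $\pi\colon G'\to G''$, integrality combined with the vertex-balance equations bounds the admissible slopes on non-contracted edges uniformly in $n$, so one can pass to a subsequence and produce a limit slope assignment on $G''$ whose cycle conditions are exactly the limits of those on $G'$ after the contracted coordinates vanish.

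The dimension claim has upper bound $\dim\PP\omoduli[g]^{trop}\le 3g-3+(2g-2)=5g-5$ from the inclusion and Proposition~\ref{prop_Divgd=generalizedconecomplex}. The matching lower bound follows from \cite[Theorem~1.2]{LinU}; one may also confirm it directly by exhibiting a top-dimensional cone coming from a trivalent stable graph $G$ of genus $g$ together with $2g-2$ two-valent weight-zero subdivision vertices that support a generic effective representative of $K_\Gamma$. As the edge lengths vary, such a configuration sweeps out a $(5g-5)$-dimensional polyhedral piece of $R(K_\Gamma)/\RR\subset\Sym^{2g-2}\Gamma$, realizing the stated maximal dimension and completing the proof.
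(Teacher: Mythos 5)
Your overall strategy (fix a combinatorial type $(G',D_0)$, enumerate integer slope assignments on the edges subject to vertex balance, and cut out the locus inside $\sigma_{G'}$ by the linear cycle conditions on the edge lengths) is the same as the paper's. But there is a genuine gap at the decisive step: you conclude that $\PP\omoduli[g]^{trop}\cap\sigma_{G'}$ is a \emph{countable} union of rational linear subspaces ``and in particular closed.'' A countable union of linear subspaces is not closed in general (e.g.\ the union of the lines $y=x/n$ in $\RR^2$), so closedness -- and for that matter the asserted generalized cone complex structure and the dimension count -- does not follow from what you have proved. The point the paper leans on is that for a fixed $(G',D_0)$ there are only \emph{finitely many} admissible slope assignments, which is \cite[Lemma~1.8]{GaKe}; with finiteness the intersection with each closed cone $\sigma_{G'}$ is a finite union of rational linear subspaces, hence closed, and closedness in the colimit $\Div_{g,2g-2}^{trop}$ follows because the topology is determined by preimages in the closed cones (so no separate cross-strata limit argument is needed).

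Your attempted repair across strata has the same weakness: you claim the slopes along a degenerating sequence are bounded ``by integrality combined with the vertex-balance equations,'' but the balance condition only pins down the \emph{sum} of outgoing slopes at each vertex, so large positive and negative slopes can coexist; individual slopes are bounded only by using effectivity $K_\Gamma+\div(f)\geq 0$ globally together with a maximum-principle type argument (or by invoking \cite[Lemma~1.8]{GaKe} directly, as the paper does). Once finiteness is in place, your inclusion into $\Div_{g,2g-2}^{trop}$, the identification of the cones, and the dimension bound $3g-3+(2g-2)=5g-5$ with the matching lower bound are fine and essentially agree with the paper's rephrasing of \cite[Theorem~1]{LinU}.
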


The Hodge bundle is not equidimensional, see Example~\ref{ex:dim} below.
Proposition~\ref{prop_Hodgebundle} shows in particular that $\PP\omoduli[g]^{trop}$ is a closed subset of $\Div_{g,2g-2}^{trop}$ that is a subcomplex of a subdivision of $\Div_{g,2g-2}^{trop}$. 

\begin{proof}[Proof of Proposition \ref{prop_Hodgebundle}]
Proposition \ref{prop_Hodgebundle} has already been proved as part of \cite[Theorem~1]{LinU} (and building upon the polyhedral description of tropical linear systems from \cite{GaKe, MiZh}). We rephrase the main insights of this proof using the language developed in Section \ref{section_modulitropdivisors}.

Let $G'$ be a semistable finite vertex-weighted graph of genus $g$ and an effective divisor $D\in\Div_{g,2g-2}(G')$ making $(G',D)$ into a stable pair. Write $G$ for the stabilization of $G'$. We will show that the pullback of the tropical Hodge bundle to $\sigma_{G'}=\R_{\geq 0}^{E(G')}$ is given by a finite union of linear subspaces of $\sigma_{G'}$. 

For simplicity choose an orientation on every edge of the graph $G$; the resulting structure will not depend on this choice. Consider a tropical curve $\Gamma$ whose underlying graph is $G'$. In order to specify a rational function $f$ on $\Gamma$ such that $D=K_\Gamma +\div(f)$ (up to a global additive $\R$-operation) we need to specify a collection of integers $(m_e)\in\Z^{E(G)}$ (one for each edge of the stabilization $G$ of $G'$), the initial slopes of $f$ at the origin of the edge $e$, subject to the condition
\begin{equation*}
2h(v)-2+\vert v\vert = \sum_{\textrm{outward edges at }v}m_e + \sum_{\textrm{inward edges at }v} -(\deg D\vert_e +m_e) \ .
\end{equation*}
Notice that by \cite[Lemma 1.8]{GaKe} there are, in fact, only finitely many choices for the initial slopes $m_e$.

In each of the finitely many cases that such an $f\in\Rat(\Gamma)$ exists, the continuity on $f$ imposes a collection of linear conditions on the coordinates of $\sigma_{G'}=\R_{\geq}^{E(G')}$ (i.e. the edge lengths of $G'$). The intersection of $\sigma_{G'}$ with such a linear subspace is a cone in the generalized cone complex structure on $\PP\omoduli[g]^{trop}$. 
\end{proof}

Let  $\PP\omoduli[g]^\an$ be the analytification of the projective Hodge bundle over an algebraically closed field $k$ endowed with the trivial absolute value. In this section we recall in detail the construction of the tropicalization map on the Hodge bundle from~\cite[Proposition~6]{LinU} and elaborate on its properties.

The moduli space $\PP\omoduli[g]^\an$ parametrizes pairs $(X/K,K_X)$ consisting of a point $X/K \in \moduli[g]^\an$ as recalled above, together with a divisor~$K_X$ that is equivalent to the canonical bundle $\omega_{X/K}$. We define a natural tropicalization map
\begin{equation*}
 \trop_\Omega: \PP\omoduli[g]^\an \to \proj\omoduli[g]^\trop 
\end{equation*}
by setting
\begin{equation} \label{eq:defTropOmega}
\trop_\Omega(X/K,K_X) \= \trop_{g,2g-2}(X/K,K_X) \ ,
\end{equation}
where $\trop_{g,2g-2}$ is the tropicalization map introduced in 
Section~\ref{sec:specialization}.
\par
\begin{Prop} \label{prop:tropomega}
The tropicalization map $\trop_\Omega$ is well-defined, continuous, proper, and closed.
\end{Prop}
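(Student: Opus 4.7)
The plan is to deduce the Proposition directly from Theorem~\ref{thm_tropDivgd} by exhibiting $\trop_\Omega$ as the restriction of $\trop_{g,2g-2}$. By the defining identity~\eqref{eq:defTropOmega}, we have a commutative diagram
\begin{equation*}
\begin{tikzcd}
\PP\omoduli[g]^{an}\arrow[r,hook]\arrow[d,"\trop_\Omega"'] & \calDiv_{g,2g-2}^{an}\arrow[d,"\trop_{g,2g-2}"]\\
\PP\omoduli[g]^{trop}\arrow[r,hook] & \Div_{g,2g-2}^{trop},
\end{tikzcd}
\end{equation*}
and the strategy is to show, first, that this diagram is well-defined and, second, that both horizontal inclusions are closed subspace embeddings. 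Then continuity, properness and closedness of $\trop_\Omega$ are immediate consequences of the corresponding properties of $\trop_{g,2g-2}$ established in Theorem~\ref{thm_tropDivgd}.

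For well-definedness, I would argue as follows. For a pair $(X/K,K_X)\in\PP\omoduli[g]^{an}$ with $K_X$ linearly equivalent to $\omega_{X/K}$, the specialization discussion in Section~\ref{section_Bakerspecialization} shows that $\trop_{g,2g-2}(X/K,K_X)=(\Gamma,\tau_\ast(K_X))$. Since Baker's specialization homomorphism commutes with linear equivalence (see \cite[Lemma~2.1]{bakerSpec}), and since $\tau_\ast(\omega_{X/K})\sim K_\Gamma$ by \cite[Remark~4.21]{bakerSpec} recalled after Theorem~\ref{thm_realizabilitylocus}, we obtain $\tau_\ast(K_X)\sim K_\Gamma$. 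As $\tau_\ast$ preserves effectivity, $\tau_\ast(K_X)\in |K_\Gamma|$ and hence $(\Gamma,\tau_\ast(K_X))\in\PP\omoduli[g]^{trop}$.

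For the two horizontal inclusions: the tropical side is handled by Proposition~\ref{prop_Hodgebundle}, which says that $\PP\omoduli[g]^{trop}$ is a closed subset of $\Div_{g,2g-2}^{trop}$. On the algebraic side, fibrewise over a smooth curve $X$ the subset $|K_X|\cong\PP^{g-1}$ is cut out inside the symmetric product $\Sym^{2g-2}X$ as the preimage of the canonical class under the Abel-Jacobi map, which is a closed condition; hence $\PP\omoduli[g]$ is a closed substack of $\calDiv_{g,2g-2}$, and analytification preserves closed immersions, so $\PP\omoduli[g]^{an}\hookrightarrow\calDiv_{g,2g-2}^{an}$ is a closed subspace embedding.

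With these two ingredients in hand, the remaining step is a purely topological observation. Continuity of $\trop_\Omega$ follows because it is the restriction of a continuous map to subspaces. For properness, the preimage under $\trop_\Omega$ of a compact set $K\subseteq \PP\omoduli[g]^{trop}$ equals $\trop_{g,2g-2}^{-1}(K)\cap\PP\omoduli[g]^{an}$, which is the intersection of a compact subset (using Theorem~\ref{thm_tropDivgd}) with a closed subset, and hence is compact. For closedness, if $C\subseteq\PP\omoduli[g]^{an}$ is closed, then $C$ is also closed in $\calDiv_{g,2g-2}^{an}$, so $\trop_\Omega(C)=\trop_{g,2g-2}(C)$ is closed in $\Div_{g,2g-2}^{trop}$ and therefore in $\PP\omoduli[g]^{trop}$ equipped with the subspace topology. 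The main delicate point is the well-definedness step, and in particular citing the correct compatibility of specialization with linear equivalence; all other steps are formal consequences of Theorem~\ref{thm_tropDivgd} and Proposition~\ref{prop_Hodgebundle}.
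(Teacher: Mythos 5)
Your overall strategy coincides with the paper's: by the defining equation~\eqref{eq:defTropOmega}, $\trop_\Omega$ is the restriction of $\trop_{g,2g-2}$, and continuity, properness and closedness are then deduced from Theorem~\ref{thm_tropDivgd} together with the closedness of $\PP\omoduli[g]^{trop}$ in $\Div_{g,2g-2}^{trop}$ from Proposition~\ref{prop_Hodgebundle}. Your explicit verification that $\PP\omoduli[g]^{an}\hookrightarrow\calDiv_{g,2g-2}^{an}$ is a closed embedding (via the Abel--Jacobi map) is a detail the paper leaves implicit, and it is indeed needed for your topological bookkeeping, so including it is welcome.

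The genuine issue is in the well-definedness step. You apply the identification $\trop_{g,2g-2}(X/K,K_X)=(\Gamma,\tau_\ast(K_X))$ of Section~\ref{section_Bakerspecialization} at an \emph{arbitrary} point of $\PP\omoduli[g]^{an}$, and you invoke the facts from \cite{bakerSpec} (compatibility of $\tau_\ast$ with linear equivalence, and the existence of a canonical divisor specializing to $K_\Gamma$) at such a point. But Section~\ref{section_Bakerspecialization} establishes that identification only when $R$ is a discrete valuation ring and the support of the divisor is $K$-rational, and the cited statements of Baker rely on a \emph{regular} semistable model over a DVR; for a general point of the Berkovich space the valuation need not be discrete and such models are not available. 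So, as written, your argument only shows that the discretely valued points (with rational support) are mapped into $\PP\omoduli[g]^{trop}$. This is precisely where the paper inserts a density argument: such points are dense in $\PP\omoduli[g]^{an}$, $\trop_{g,2g-2}$ is continuous, and $\PP\omoduli[g]^{trop}$ is closed in $\Div_{g,2g-2}^{trop}$, hence every point of $\PP\omoduli[g]^{an}$ lands in $\PP\omoduli[g]^{trop}$. You already have both ingredients in hand (continuity from Theorem~\ref{thm_tropDivgd}, closedness from Proposition~\ref{prop_Hodgebundle}), so the repair is short, but without it the well-definedness claim for non-discretely-valued points is not justified; the paper's own well-definedness argument for discretely valued points (a moving lemma on a regular model producing a possibly non-effective canonical divisor of multidegree $K_\Gamma$) is otherwise essentially the same as yours, only with a different reference.
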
 
\par
\begin{proof} The fact that  $\trop_\Omega$ is well-defined can be shown using a moving lemma
as in \cite[Lemma 4.20]{bakerSpec}. We will see an alternative proof of this fact in Section \ref{section_mainresult} in the framework of this article.

Let $\calX$ be a semistable model of $X$ over a discrete valuation ring $R$ (or a finite extension thereof) extending $k$. We may assume that $\calX$ is regular; otherwise we blow up accordingly. By a moving lemma, such as \cite[Proposition 1.11]{Liu_AG}, we may find a (not necessarily effective) canonical divisor $K_\calX$ on $\calX$ that does not meet the singularities in the special fiber. It is well-known that the multidegree of $K_\calX$ in the special fiber is equal to $K_\Gamma$ (see e.g.\cite[Remark~4.18]{bakerSpec}). Any effective canonical divisor $K_X$ on $X$ is equivalent to the generic fiber of $K_\calX$ and therefore the specialization of $K_X$ to $\Gamma$ is equivalent to $K_\Gamma$.  

The discretely valued points in $\PP\omoduli[g]^{an}$ are dense and, since $\trop_{g,2g-2}$ is continuous and $\PP\omoduli[g]^{trop}$ is closed by Proposition 
\ref{prop_Hodgebundle} above, we obtain that $\trop_{g,2g-2}(x)$ is in $\PP\omoduli[g]^{trop}$ for every (not necessarily discretely valued) point $x\in \PP\omoduli[g]^{an}$. 
Since $\PP\omoduli[g]^{trop}$ is naturally a closed subset of $\Div_{g,2g-2}^{trop}$ the properness and closedness of $\trop_{\Omega}$ follow from the corresponding properties of $\trop_{g,2g-2}$.
\end{proof}
\par
\begin{definition} 
The \emph{realizability locus} $\PP\calR_\Omega$ in $\PP\omoduli[g]^{trop}$ is the image
\begin{equation*}
\PP\calR_\Omega=\trop_\Omega(\PP\omoduli[g]^\an)
\end{equation*}
of the tropicalization map.
\end{definition}
\par
The realizability locus $\PP\calR_\Omega$ is the locus of tuples $(\Gamma,D)$ consisting of a stable tropical curve $\Gamma$ and a canonical divisor $D$ for which there is a stable family $\calX$ of curve over a valuation ring $R$ together with an effective canonical divisor $K_X$ on the generic fiber $X$ of $\calX$ such that $\Gamma$ is the dual tropical curve of $\calX$ and $D$ is the specialization of $K_X$.

\section{Twisted differentials and the global residue condition} \label{sec:twd}

From now on we work over the field $\CC$ of complex numbers. 
Let~$X/\CC$ be a smooth and proper algebraic curve, i.e.\ a compact
Riemann surface.
We let $\omoduli[g] \to \moduli[g]$ be the space of pairs
$(X,\omega)$ consisting of an algebraic curve together with a non-zero holomorphic 
one-form~$\omega$ on~$X$. This is the Hodge bundle over the
complex-analytic moduli space of curves, deprived of the zero section. 
The multiplicities of the zeros of~$\omega$ define a partition~$\mu$
of $2g-2$ and the subspaces  $\omoduli[g](\mu)$ with fixed partition~$\mu$
form a stratification of $\omoduli[g]$. We will focus most of the
time on the {\em principal stratum} corresponding to the partition
$\mu = (1,\ldots,1)$ and we usually put $n=|\mu|$.
\par
In this section we recall from~\cite{bcggm} the description of a 
compactification
of the strata of $\omoduli[g]$. More concretely, observe that there is a natural
map $\varphi: \omoduli[g](\mu) \to \moduli[g,{[\mu]}]$ sending $(X,\omega)$ to the
curve marked by the zeros of~$\omega$. Here $\moduli[g,{[\mu]}]$ is
the quotient of $\moduli[g,n]$ by the symmetric group that permutes the
entries of~$\mu$.
The main theorem of~\cite{bcggm} is a characterization
of the closure of $\varphi$. The version given here highlights the 
possible scaling parameters of one-parameter families approaching
the boundary. These scaling parameters will reflect the location of 
the support of the corresponding tropical divisors. We need to set up
some notation to recall the theorem for the case of holomorphic abelian
differentials.
\par
\begin{definition} \label{def:type}
We call any 
tuple $\mu=(m_1,\ldots,m_n)\in\ZZ^n$ such that $\sum m_i=2g-2$ and that 
$m_1\ge \ldots\ge m_r>m_{r+1}=0=\ldots=m_{r+s}>m_{r+s+1}\ge\ldots m_{r+s+p}$
a type. We denote by $p_1$ the number of $-1$ occuring in this tuple.
\end{definition}
\par
We assign a type to any meromorphic differential via the multiplicities
of its associated divisor.

The moduli space $\omoduli[g](\mu)$ parametrizes meromorphic one-forms
whose divisor is of type~$\mu$. We may view these spaces as strata
of a twisted Hodge bundle (see~\cite{bcggm}, but we do not need
this viewpoint here). 
\par
\subsection{Level graphs}
Let $\aG = (V,E)$ be an (unmetrized) graph. A {\em full order}~$\oG$ 
on~$\aG$ is an order $\succcurlyeq$
on the vertices~$V$  that is reflexive, transitive,
and such that for any $v_1, v_2 \in V$ at least one of the
statements $v_1 \succcurlyeq v_2$ or $v_2 \succcurlyeq v_1$ holds.
We call any function $\ell:V(\aG)\to\ZZ_{\le 0}$ such
that $\ell^{-1}(0)\ne\emptyset$ a {\em level function} on $\aG$.
Note that a level function induces a {\em full order} on $\aG$
by setting $v \preccurlyeq w$, if $\ell(v) \leq \ell(w)$.
A {\em level graph} is a graph together with a choice
of a level function. Abusing notation, we use the symbol~$\oG$ also
for level graphs.
\par
For a given level~$L$ we call the subgraph of $\lG$ that consists of all 
vertices $v$ with $\ell(v) > L$ along with edges between them the 
{\em graph above level $L$} of $\lG$, and denote it by
$\lG_{>L}$. We similarly define the graph $\lG_{\geq L}$
{\em above or at level $L$}, and the graph $\lG_{=L}$
{\em at level $L$}. An edge $e\in E(\lG)$ of a level graph $\lG$ is called 
{\em horizontal} if it connects two vertices of the same level, and it is 
called {\em vertical} otherwise. Given a vertical edge $e$, we denote 
by $v^+(e)$ (resp.~$v^-(e)$) the vertex that 
is its endpoint of higher (resp.~lower) level.

\subsection{Twisted differentials} \label{sec:twdif}
Let $C$ be a nodal, 
in general non-smooth curve over the complex numbers.
Let $\mu = (m_1,\ldots,m_n)$ be a type.
A {\em \twd of type $\mu$} on a stable $n$-pointed curve $(C,\bfs)$ is 
a collection of (possibly meromorphic)
differentials $\eta_v$ on the irreducible components~$C_v$ of $C$ such that 
no $\eta_v$ is identically zero with the following properties.
\begin{itemize}
\item[(0)] {\bf (Vanishing as prescribed)} Each differential $\eta_v$ is 
holomorphic and nonzero outside of the nodes and marked points of~$C_v$. 
Moreover, if a marked point $s_i$ lies on~$C_v$, then $\ord_{s_i} \eta_v=m_i$.
\item[(1)] {\bf (Matching orders)} For any node of $C$ that identifies 
$q_1 \in C_{v_1}$ with $q_2 \in C_{v_2}$, the vanishing orders satisfy
$\ord_{q_1} \eta_{v_1}+\ord_{q_2} \eta_{v_2}\=-2.$
\item[(2)] {\bf (Matching residues at simple poles, MRC)}  If at a node of $C$
  that identifies $q_1 \in C_{v_1}$ with $q_2 \in C_{v_2}$ the condition
  $\ord_{q_1}\eta_{v_1}=
\ord_{q_2} \eta_{v_2}=-1$ holds, then $\Res_{q_1}\eta_{v_1}+\Res_{q_2}\eta_{v_2}=0$.
\end{itemize}
Let $\aG$ be the dual graph of $C$. Recall that the vertices $v$ in $\Gamma$
correspond to the irreducible components $C_v$ of $C$. If~$\ell$ is a level
function on $\Gamma$, 
we write $C_{>L}$ for the subcurve of $C$ containing only the components $C_v$
with $v$ of level strictly bigger than $L$.
Similarly, we define $C_{=L}$. If two components $C_v$ and $C_w$ with
$\ell(v) <  \ell(w)$ intersect in the point $q$,
we denote  by $q^-$ the corresponding point on $C_v$, and we write
$v = v^-(e)$ for the edge~$e$ in~$\aG$ connecting $v$ and $w$.
\par
Denote by $\oG$ the full order on the dual graph $\aG$ given by a
level function. We say that a \twd $\eta$ of type $\mu$ on $C$ 
is called {\em compatible with~$\lG$}  if in addition it also 
satisfies the following two conditions.
\begin{itemize}
\item[(3)]{\bf (Partial order)} If a node of $C$  identifies 
$q_1 \in C_{v_1}$ with $q_2 \in C_{v_2}$, then $v_1\succcurlyeq  v_2$ if and 
only if $\ord_{q_1} \eta_{v_1}\ge -1$. Moreover,  $v_1\asymp v_2$ if and only if
$\ord_{q_1} \eta_{v_1} = -1$.
\item[(4)] {\bf (Global residue condition, GRC)} For every level $L$
and every connected component~$Y$ of $C_{>L}$ 
the following condition holds: Let
$q_1,\ldots,q_b$ denote the set of all nodes where~$Y$ intersects $C_{=L}$. Then
$$ \sum_{j=1}^b\Res_{q_j^-}\eta_{v^-(q_j)}=0,$$
where we recall that $q_j^-\in C_{=L}$ and $v^-(q_j)\in \lG_{=L}$.
\end{itemize}
\par

\subsection{The characterization of limit points} \label{sec:charlimit}

Suppose that $S$ is the spectrum of a discrete valuation ring~$R$ with residue 
field~$\CC$, whose maximal ideal is generated by~$t$. Let  $\cXX/S$ be a
family of semi-stable curves with smooth generic fiber~$X$ and special
fiber $C$. Let  $\omega$ be a section of $\omega_{\cXX/S}$ of type~$\mu$
whose divisor is given by the sections $\bfs = (s_1,\ldots,s_n)$
with multiplicity~$m_i$. The triple $(\cXX/S, \bfs, \omega)$
is called a \emph{pointed family of stable differentials}, if
moreover $(\cXX/S,\bfs)$ is stable. 
Then we define the {\em scaling factor $\ell(v)$} for the node~$v$ as
the non-positive integer such that the restriction of the
meromorphic differential $t^{-\ell(v)}\cdot \omega$ to
the component $C_v$  of the special fiber corresponding to~$v$
is a well-defined and generically non-zero differential~$\eta_v$ on~$C_v$
(see \cite[Lemma~4.1]{bcggm}).
The $\eta_v$ are called the \emph{scaling limits} of~$\omega$.
\par
\begin{Thm}[\cite{bcggm}] \label{thm:bcggm}
If $(\cXX/S, \bfs, \omega)$ is as above, 
then the function $\ell(v)$ defines a full order on the
dual graph $\Gamma$ of the special fiber of~$\cXX$ and the collection
$\eta_v|_{X_v}$ is a twisted differential of type~$\mu$ compatible with 
the level function~$\ell$.
\par
Conversely, suppose that $C$ is a stable $n$-pointed curve with dual
graph~$\Gamma$ and $\eta = \{\eta_v\}_{v \in V}$ is a twisted differential
of type~$\mu$ compatible with a full order~$\oG$ on~$\Gamma$. Then for
every level function $\ell:\Gamma \to \ZZ$ defining the full order~$\oG$ and
for every assignment of integers~$n_e$ to horizontal edges
there is a stable
family $\cXX/S$ over~$S = \Spec(\CC[[t]])$ with smooth generic fiber and
special fiber~$C$ that satisfies the following properties:
\begin{itemize}
\item[i)] There exists a global section~$\omega$ of $\omega_{\cXX/S}$ whose
horizontal divisor $\div_{\rm hor}(\omega) = \sum_{i=1}^n m_i \Sigma_i$
is of type~$\mu$ and whose scaling limits are the collection
$\{\eta_v\}_{v \in V}$.
\item[ii)] The intersections $\Sigma_i \cap C = \{s_i\}$ are smooth points of
the special fiber and $\eta$ has a zero of order~$m_i$ in $s_i$. 
\item[iii)] There exists a positive integer~$N$ such that a local equation
  near every node corresponding to  a
    horizontal edge~$e$ is $xy=t^{N n_e}$, and it is
$xy = t^{N(\ell(q^+(e)) - \ell(q^-(e)))}$ for every vertical edge~$e$.
\end{itemize}
\end{Thm}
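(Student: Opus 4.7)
The plan is to handle the two directions by very different techniques. The forward direction is essentially a local calculation at each node combined with a global residue argument, while the converse is a formal plumbing and gluing construction carried out order by order in~$t$.

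For the forward direction, I would first observe that the scaling factor $\ell(v)$ exists and is unique: $\omega$ is a nonzero section of $\omega_{\cXX/S}$, so its stalk at the generic point of $C_v$ has a well-defined $t$-adic valuation, and multiplying by the appropriate power of~$t$ yields a meromorphic form $\eta_v$ on $C_v$ that is generically nonzero. Condition~(1) (matching orders) is automatic from the description of the relative dualizing sheaf at a node, and condition~(2) (matching residues at simple poles) is the standard fact that the two residues of a section of $\omega_{\cXX/S}$ at an ordinary double point sum to zero. Condition~(3) is bookkeeping, comparing the pole orders across a node and using that a larger scaling factor on one component forces a pole on the opposing component. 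The substantive point is condition~(4): for each connected component $Y$ of $C_{>L}$, the next-to-leading term of~$\omega$ in the $t$-adic expansion restricts to a meromorphic differential on a smoothing of~$Y$ whose poles at the vanishing cycles bounding $Y$ have residues that one computes to be exactly the $\Res_{q_j^-}\eta_{v^-(q_j)}$. The residue theorem on that smooth surface then forces their sum to vanish.

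For the converse direction, I would build $\cXX/S$ by iterative plumbing. At each horizontal edge~$e$, condition~(2) guarantees that the standard plumbing $xy=t^{Nn_e}$ admits an extension of the given differentials across the resulting cylinder once $N$ is sufficiently divisible. At each vertical edge~$e$, the parameter $xy = t^{N(\ell(v^+(e))-\ell(v^-(e)))}$ is chosen so that after rescaling $\eta_v \mapsto t^{\ell(v)}\eta_v$ on each component the leading Laurent expansions at the node match by condition~(1). To smooth the plumbing to all orders in~$t$, one works level by level, adding holomorphic correction terms of order $O(t)$ on each component of $C_{=L}$ so that the higher order Laurent coefficients at the adjacent upper nodes also line up. At each stage the existence of such corrections is controlled by a linear system whose solvability is equivalent to the GRC~(4) at the next level up; given the GRC, suitable corrections exist and can be patched together. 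The integer~$N$ is chosen large enough to clear all denominators introduced by the rescaling, and the horizontal plumbing parameters~$n_e$ are free.

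The main obstacle is organizing this inductive plumbing so that it produces a genuine formal family over $\Spec \CC[[t]]$ together with a global section of $\omega_{\cXX/S}$ of horizontal divisor $\sum m_i \Sigma_i$. The conceptual core is the identification, at each order in~$t$ and each level~$L$, of the obstruction to extending the correction with a sum of residues along the vanishing cycles at~$L$, which vanishes precisely by the GRC. Once the formal construction terminates at every level, algebraization is immediate since everything is carried out over the formal disc, the sections $\Sigma_i$ are preserved from the marked points of~$C$, and properties~i)--iii) hold by construction.
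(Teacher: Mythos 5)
Your proposal does not match what the paper actually does: the theorem is quoted from \cite{bcggm}, and the paper's proof is a citation argument — the necessity direction is taken verbatim from Section~4.1 of loc.\ cit.\ (with the remark that those arguments work over any discrete valuation ring), and for the sufficiency direction the paper merely traces the plumbing proof of \cite{bcggm} to extract the one refinement it needs for item~iii): the plumbing fixtures at horizontal nodes are unconstrained (so the $n_e$ may be prescribed freely), while at a vertical node~$e$ the level function $\ell_0$ used for plumbing must satisfy the divisibility constraint $(\ord_{q^+(e)}\eta+1)\mid(\ell_0(q^+(e))-\ell_0(q^-(e)))$, which is arranged by replacing $\ell$ with $N\cdot\ell$ for a sufficiently divisible~$N$. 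Your remark that ``$N$ is chosen large enough to clear all denominators'' gestures at this, but the precise divisibility condition — which is the entire content of the paper's argument and the reason a factor $N$ appears in the statement at all — is never identified.

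More seriously, your attempt to reprove the converse direction has a genuine gap at its core. The assertion that one can add correction terms ``order by order in $t$'' and that ``the existence of such corrections is controlled by a linear system whose solvability is equivalent to the GRC at the next level up'' is exactly the main theorem of \cite{bcggm}, stated rather than proved. Smoothing a twisted differential is genuinely obstructed (that is the point of the GRC), and the known proof is not a formal $t$-adic induction: it is a complex-analytic plumbing construction in which the GRC is used once, to produce \emph{meromorphic} modification differentials on lower-level components that absorb the mismatched residues and periods, after which annuli are glued and periods are matched; matching Laurent coefficients at the nodes formally is not sufficient to glue a differential across a plumbing annulus. A purely formal/algebraic order-by-order argument of the kind you sketch is not available in the literature — indeed this paper explicitly notes that an algebraic-geometric proof of the results of \cite{bcggm} would be highly interesting precisely because none is known — so the crux of your converse direction is an unsubstantiated claim rather than a proof. (In the forward direction your GRC argument is also imprecise: the residues are obtained as limits of periods of the rescaled form $t^{-L}\omega$ along the vanishing cycles bounding the subsurface degenerating to~$Y$, and their sum vanishes by Stokes' theorem applied to that subsurface, not by the residue theorem applied to a ``next-to-leading term'' on a smoothing of~$Y$; this is the argument recalled later in the paper in the proof of Theorem~\ref{thm:realizecrit}.)
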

\par
\begin{proof}
The first statement is the necessity of the Theorem~1.3
of~\cite{bcggm}, proven in Section~4.1. Note that the arguments
given in loc.\ cit.\ for this direction hold over any discrete valuation
ring.
\par
For the second statement
one has to trace the proof of sufficiency of this theorem, given
in Section~4.4 of loc.\ cit. As stated there (see equation~(4.8) and the last two paragraphs of
the proof of Addendum~4.8), there are no
constraints for the plumbing fixtures to be used for plumbing
horizontal nodes, whereas for the plumbing fixtures used for
every vertical nodes, given by an edge~$e$, the level function
$\ell_0$ used for plumbing has to satisfy the divisibility
constraint 
$$(\ord_{q^+(e)} \eta + 1) \,\,|\,\,(\ell_0(q^+(e)) - \ell_0(q^-(e)))\,.$$
Multiplying the prescribed function~$\ell$ by a sufficiently
divisible~$N$, the resulting level function~$\ell_0=N\cdot \ell$ 
satisfies this divisibility property.
\end{proof}

\subsection{Dimension and period coordinates} \label{sec:percoord}

In preparation for the dimension statements in Section~\ref{sec:dimresults}
we recall here two results about the geometry of strata
of meromorphic differentials. Consider the neighborhood of a point
$(X,\omega) \in \omoduli(\mu)$. We denote by $Z$ the $r+s$ zeros
and marked points of~$\omega$ and let $P$ be the $p$ poles of~$\omega$. 
On such a neighborhood, integration of the meromorphic
one-form against a basis of the relative cohomology group
$H^1(X \setminus P, Z; \ZZ)$ gives local coordinates,
called \emph{period coordinates}.
See \cite{kdiff} for a proof of this statement  (including the case
of $k$-differentials) and for references to the history of this result.
The fact that these functions are local coordinates 
also proves the following dimension statement.
\par
\begin{Thm} \label{thm:dim}
The stratum $\omoduli(\mu)$ has dimension $2g-1 + n$ if
the type  $\mu$ is holomorphic (i.e.\ if $p=0$),  and it has dimension $2g-2 + n$
if the type $\mu$  is strictly meromorphic (i.e.\ if $p>0$)
\end{Thm}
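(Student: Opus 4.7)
The plan is to reduce the dimension count to a purely topological computation, invoking the key input already cited in Section~\ref{sec:percoord}: integration of $\omega$ against a basis of the relative homology $H_1(X \setminus P, Z; \ZZ)$ (equivalently, pairing with a basis of $H^1(X\setminus P, Z; \ZZ)$) defines a local biholomorphism from a neighborhood of $(X,\omega)$ in $\omoduli[g](\mu)$ onto an open subset of $\CC^N$, where $N = \operatorname{rank}_{\ZZ} H^1(X \setminus P, Z; \ZZ)$. Granting this fact from \cite{kdiff}, it suffices to compute this rank in the two cases.

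First I would write down the long exact sequence of the pair $(X \setminus P, Z)$:
\begin{equation*}
0 \to H^0(X\setminus P, Z) \to H^0(X\setminus P) \to H^0(Z) \to H^1(X\setminus P, Z) \to H^1(X\setminus P) \to H^1(Z) \to 0.
\end{equation*}
Since $X\setminus P$ is connected and $Z$ is a non-empty finite set of points (one has $r \geq 1$ because $\sum m_i = 2g-2 > 0$ forces some $m_i > 0$), we have $H^0(X\setminus P, Z) = 0$ and $H^1(Z) = 0$. Substituting $H^0(X\setminus P) \cong \ZZ$ and $H^0(Z) \cong \ZZ^{r+s}$, the sequence yields
\begin{equation*}
N \= (r+s) - 1 + \operatorname{rank} H^1(X \setminus P).
\end{equation*}

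It then remains to evaluate $\operatorname{rank} H^1(X\setminus P)$ in the two regimes. In the holomorphic case $p = 0$, we have $X \setminus P = X$, a closed Riemann surface of genus~$g$, so $\operatorname{rank} H^1(X) = 2g$ and therefore $N = 2g + (r+s) - 1 = 2g - 1 + n$. In the strictly meromorphic case $p > 0$, the punctured surface $X\setminus P$ is connected and non-compact of Euler characteristic $2 - 2g - p$ with $b_0 = 1$ and $b_2 = 0$, hence $\operatorname{rank} H^1(X \setminus P) = 2g + p - 1$, giving $N = 2g + (r+s+p) - 2 = 2g - 2 + n$. The only substantive ingredient is the period-coordinate statement itself, which is delegated to \cite{kdiff}; once that is in hand the remaining argument is a routine bookkeeping, so I do not foresee a genuine obstacle.
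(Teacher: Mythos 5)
Your proposal is correct and is essentially the paper's own argument: the paper likewise deduces Theorem~\ref{thm:dim} directly from the period-coordinate statement imported from \cite{kdiff}, leaving the rank of $H^1(X\setminus P,Z;\ZZ)$ implicit, which you have simply made explicit via the long exact sequence of the pair. One minor caveat: your justification that $Z\neq\emptyset$ (namely ``$\sum m_i=2g-2>0$'') only covers $g\geq 2$, whereas the theorem is later applied to low-genus vertices; there one should instead observe that stability forces $r+s\geq 1$, and this hypothesis is genuinely needed, since for degenerate types such as $g=0$, $\mu=(-1,-1)$ (where $Z=\emptyset$) the stated dimension formula fails while the period-coordinate count $\operatorname{rank} H^1(X\setminus P,Z;\ZZ)$ remains correct.
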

\par
The following result is the special case for one-forms
of a main result of \cite{chencomplete}.
\par
\begin{Thm}[\cite{chencomplete}] \label{thm:nocomplete}
The projectivisation of a stratum $\PP\omoduli(\mu)$ of strictly
meromorphic type (i.e.\ with $p>0$) does not contain a complete curve.
\end{Thm}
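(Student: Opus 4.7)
The plan is to exploit the period coordinates described in Section~\ref{sec:percoord}. Locally on $\omoduli(\mu)$, integration of $\omega$ against a basis of $H^1(X \setminus P, Z; \ZZ)$ yields holomorphic coordinates on which the $\CC^\times$-scaling of $\omega$ acts by simultaneous scalar multiplication, so $\omoduli(\mu)$ is locally modeled on an open subset of $\CC^N$ with the standard $\CC^\times$-action, and $\PP\omoduli(\mu)$ is locally embedded in $\PP^{N-1}$. The strategy is to construct enough global $\CC^\times$-invariant regular or rational functions on $\omoduli(\mu)$ to preclude a complete curve $C$ in the quotient $\PP\omoduli(\mu)$.

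When $p \geq 2$, the residues $r_i = \Res_{p_i}(\omega)$ at the poles are period coordinates that scale linearly with $\omega$ and satisfy only the single residue-theorem relation $\sum_i r_i = 0$. Hence the ratios $r_i/r_j$ descend to rational functions on $\PP\omoduli(\mu)$ and are manifestly non-constant, since in local period coordinates one can vary $r_i$ keeping $r_j$ fixed. If a complete curve $C$ existed, each such ratio would restrict to a constant on $C$ away from its indeterminacy locus, and varying the index pairs (together with ratios of non-residue periods to $r_j$) would force $C$ to be cut out by enough linear relations in local period charts to be zero-dimensional. For the case $p = 1$, where the unique residue vanishes automatically, I would pass to the finite \'etale cover of $\omoduli(\mu)$ parametrizing a choice of jet of a local coordinate at the pole; there, the higher Laurent coefficients become honest period-like functions, their ratios provide non-constant $\CC^\times$-invariant rational functions, and a complete curve in $\PP\omoduli(\mu)$ lifts to a complete curve in this cover, giving the same contradiction.

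The hard part will be passing from \emph{sufficiently many} $\CC^\times$-invariant rational functions to \emph{no complete curve}, since a rational function on $\PP\omoduli(\mu)$ may well restrict to a non-constant meromorphic function on a complete $C$, and pure dimension counts in period coordinates must be reconciled with the global monodromy. To overcome this, I would follow~\cite{chencomplete} and show that the combined residue, relative-period, and jet data realize (a finite cover of) $\PP\omoduli(\mu)$ as a closed subvariety of an affine variety: the scale-invariant period ratios against a fixed nonvanishing one give regular functions, their simultaneous vanishing loci are controlled by the explicit linear relations among periods, and affineness is inherited by closed subvarieties. Since an affine variety contains no complete positive-dimensional subvariety, $C$ must be a point, which is the desired contradiction.
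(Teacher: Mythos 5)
The paper gives no proof of this statement at all: it is imported as a special case of the main result of \cite{chencomplete}, so your proposal has to stand on its own as a proof of Chen's theorem, and it has a genuine gap at exactly the decisive point. Everything rests on your final claim that (a finite cover of) $\PP\omoduli(\mu)$ embeds as a closed subvariety of an affine variety. You cannot cite \cite{chencomplete} for this: affineness of the (projectivized) strata is precisely the stronger open question motivating that paper, and the no-complete-curve statement is proved there by a different, intersection-theoretic mechanism (the leading Laurent coefficient at each marked zero or pole of order $m_i$ identifies the tautological line bundle $\calO(-1)$ with the $(m_i+1)$-st power of the cotangent class $\psi_i$, and comparing degrees of these classes on a hypothetical complete curve gives the contradiction), not by exhibiting the stratum inside an affine variety. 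Since affineness is strictly stronger than the statement you are proving, invoking it is circular. Nor can the intermediate steps you actually carry out substitute for it: the only single-valued $\CC^\times$-equivariant holomorphic functions you produce are the residues; relative and absolute periods have nontrivial monodromy on the stratum, so ``ratios of non-residue periods to $r_j$'' are not rational functions on $\PP\omoduli(\mu)$. Even granting constancy of the residue ratios along a complete curve (which, as you note, does not follow for merely rational functions), this only confines the curve to a projectivized isoresidual locus, a subvariety of codimension at most $p-1$ and hence positive-dimensional in general -- it does not cut the curve down to a point. Note also that for $p=p_1=2$ the ratio $r_1/r_2\equiv-1$ is identically constant and carries no information, so the ``manifestly non-constant'' claim already fails there.

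The $p=1$ branch is also unjustified as written. There is no finite \'etale cover parametrizing ``a choice of jet of a local coordinate at the pole'': jets of coordinates form a positive-dimensional torsor under the group of jets of coordinate changes, and the higher Laurent coefficients are not well-defined functions on the stratum -- only the residue is coordinate-independent. One can normalize a coordinate at a pole of order $m\ge 2$ up to a finite group only after fixing the leading coefficient, and that normalization destroys the $\CC^\times$-equivariance you need in order to descend anything to the projectivization. In short: the steps you can actually perform (period coordinates, residues) are insufficient to exclude a complete curve, and the step that would finish the argument (affineness, or some positivity statement replacing it) is exactly the unproven content; the workable route is the degree computation with tautological classes as in \cite{chencomplete}, which is also all that the present paper relies on when it quotes the result without proof.
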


\subsection{The image of the residue map}

Since the global residue condition imposes strong constraints
on the residues, we need a criterion on which residues can actually
be realized. Since a twisted differential is a collection of
meromorphic differentials, rather than just holomorphic differentials,
we have to deal more generally with types of meromorphic differentials.
Recall from the beginning of Section~\ref{sec:twd} the conventions used to
denote types of meromorphic differentials. In particular, $p_1 \leq p$
denotes the number of simple poles. For every type~$\mu$ with $p \neq 0$ we let 
$$ \Res\,:\, \omoduli(\mu) \to H$$
be the residue map, whose range is contained by the residue theorem in
$$H \= \Bigl\{\bfx = (x_1,\ldots,x_p) \in \CC^p\,:\, \sum_{i=1}^p x_i= 0 \Bigr\}\,.$$ 
Moreover we define the 'non-zero set' $N \subset H$ to consist
of those~$\bfx$ with $x_i \neq 0$ whenever $m_i = -1$. By definition
of a stratum, the image of $\Res$ is obviously contained in~$H \cap N$.
\par
To illustrate the problem of determining the image of~$\Res$, consider
differentials~$\eta$ of type  $\mu=(a,-b,b-2-a)$ with $a \geq 0$, $b \geq 2$
and $b-a-2 \leq -2$ on a projective line with coordinate~$z$.
We may assume that the zero of~$\omega$ is at $z=1$, while the poles are
at $z=0$ and $z=\infty$. Consequently, $\eta = C (z-1)^a dz/z^b$ with
$C \neq 0$. This implies that the residue is non-zero, in fact
${\rm Im}(\Res) = H \setminus \{(0,0)\} \subset H$.
The image of the map $\Res$ in the general case was determined by Gendron
and Tahar in~\cite{GenTah}. We restate a simplified version of their main
result that is sufficient for our purposes.
\par
\begin{Prop} \label{prop:ressurj} 
\begin{enumerate}[(i)]
\item If $g \geq 1$ then $\Res$ is surjective
onto $N \cap H$, which is a non-empty set unless $p=p_1=1$.
\item If $g=0$ and $p>p_1>0$ or $p_1=0$ and there does not exist an index
$1 \leq i \leq n$ with
\be \label{eq:micond}
m_i > \Bigl(\sum_{j=r+s+1}^{r+s+p}  -m_j \Bigr) - p -1
\ee
then $\Res$ is surjective onto $N \cap H$.
\item If $g=0$ and $p_1=0$ and there exists an index $1 \leq i \leq n$
with~\eqref{eq:micond}, then $\Res$ is surjective onto
$N \cap H \setminus \{(0,\ldots,0)\}$.
\item
If $g=0$ and $p=p_1=2$ then $\Res$ is surjective onto
$N \cap H$. 
\item If $g=0$ and $p=p_1>2$, then the
image of $\Res$ contains all tuples in $N \cap H$ consisting
of~$p$ $\RR$-linearly independent vectors.
\end{enumerate}\end{Prop}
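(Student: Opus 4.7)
The plan is to derive this proposition from the main theorem of Gendron and Tahar \cite{GenTah}, which gives a complete classification of the image of the residue map on strata of meromorphic differentials. Our task is to verify that the conditions listed in (i)--(v) match the (unavoidable) constraints in their classification under our hypotheses.

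First I would dispatch case (i). For $g \geq 1$ the space of global holomorphic differentials is nontrivial, so starting with any single meromorphic differential of type~$\mu$ (whose existence is classical) one can modify it by adding holomorphic forms to alter residues without changing the pole orders. The only intrinsic constraints are the residue theorem (carving out $H$) and the non-vanishing at simple poles (carving out $N$), and these two together are consistent unless $p = p_1 = 1$, in which case a single simple pole would be forced to have residue~$0$, contradicting the $N$-condition and leaving $N \cap H = \emptyset$.

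For cases (ii)--(v) one reduces to explicit computation on $\mathbb{P}^1$. A differential of type~$\mu$ on $\mathbb{P}^1$ has the form $\eta = f(z) \, dz$ with $f$ a rational function whose denominator is determined by the pole positions and multiplicities and whose numerator has prescribed zeros and multiplicities. Partial-fraction expansion turns $\Res$ into an algebraic map from the space of numerator coefficients and zero/pole positions to $H \subset \mathbb{C}^p$. In case (ii), either the presence of a non-simple pole ($p > p_1 > 0$) or the failure of \eqref{eq:micond} provides enough free parameters for every admissible residue vector to be realized. In case (iii) the estimate \eqref{eq:micond} is exactly what forces a nonzero leading coefficient on the numerator, excluding only the origin. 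Case (iv) is the two-pole toy computation already sketched in the text for $\mu = (a,-b,b-a-2)$, extended to general pole orders with $p = p_1 = 2$.

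The main obstacle will be case (v): when $p = p_1 > 2$ on $\mathbb{P}^1$, the residues become values at distinct prescribed points of a single polynomial of bounded degree, and the image of $\Res$ turns out to be a real-algebraic (not Zariski-open) subset of $N \cap H$. Here one genuinely needs the Gendron--Tahar description: after matching their parameters to our $(g, \mu, p, p_1)$, the statement reduces to verifying that any $\mathbb{R}$-linearly independent tuple in $N \cap H$ lies in their image. This independence hypothesis is strictly weaker than their full characterization, which is a feature rather than a bug, since it is all we need later; checking that independence implies realizability is the only real content of (v) and is where most of the work lies.
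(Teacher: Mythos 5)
Your overall plan---reading the proposition off the Gendron--Tahar classification---is the same route the paper takes: items (i)--(iii) are restatements of \cite[Theorems~1.1--1.5]{GenTah}, item (iv) follows from the explicit two-pole computation given just before the statement, and item (v) comes from \cite[Proposition~1.6]{GenTah}. However, the places where you try to argue independently contain a genuine error. Your argument for (i) would fail: adding a global holomorphic one-form to a meromorphic differential does not change its polar parts, hence leaves every residue unchanged, so this operation cannot ``alter residues''; moreover it destroys the prescribed zero orders $m_1,\dots,m_r$, so the type $\mu$ is not preserved either. Surjectivity for $g\geq 1$ genuinely requires the constructions of \cite{GenTah}; only the non-emptiness claim (that $N\cap H=\emptyset$ exactly when $p=p_1=1$, by the residue theorem) is as easy as you say. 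Likewise, in (ii) and (iii) the inequality \eqref{eq:micond} is a genuine obstruction whose necessity and sufficiency is precisely the content of Gendron--Tahar; a partial-fraction ``enough free parameters'' count does not establish surjectivity, and the asymmetry between (ii) and (iii) already shows the image is not governed by a dimension count.

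For (v) you correctly identify that one must quote \cite{GenTah}, but you miss a concrete step the paper needs: their Proposition~1.6 treats only the case of a single zero ($r=1$), so to cover general types one must either apply the procedure of splitting a zero or invoke the more involved \cite[Proposition~1.7]{GenTah}. Also, your description of (iv) as the text's example ``extended to general pole orders'' is slightly off: when $g=0$ and $p=p_1=2$ both poles are simple and the zero orders are forced to vanish, so the differential is just $C\,dz/z$ and the case is immediate, which is what the paper means by ``follows from the preceding discussion''. In short: where you defer to \cite{GenTah} you match the paper; where you replace the citation by your own sketch, the argument for (i) is wrong and those for (ii)--(iii) are not proofs.
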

\par
\begin{proof} All the conditions except for the case $g=0$ and $p=p_1$ (only
simply poles) are restatements of the case of abelian differentials
in \cite[Theorem~1.1-1.5]{GenTah}. The case $p=p_1=2$
follows directly from the preceding discussion. The case $p=p_1>2$
is stated for $r=1$ in \cite[Proposition~1.6]{GenTah}. For $r>1$ it
can be easily deduced from this by the procedure of splitting a zero,
or it can be derived from the more involved formulation in \cite[Proposition~1.7]{GenTah}.
\end{proof}

\section{The realizability locus} \label{sec:descR}

In this section we prove our main Theorem~\ref{thm:realizecrit} describing
the realizability locus of tropical canonical divisors over an algebraically
closed field of characteristic zero.4
For tropical curves with rational sides lengths and canonical
divisors with rational coordinates we use the complex-analytic
techniques of the previous section to characterize the image
of the tropicalization map. For general points the results
follow from the general properties of $\trop_\Omega$.

\subsection{From rational functions to enhanced level graphs}\label{section_mainresult}

To formalize the correspondence between rational functions and
level graphs, we introduce the following enhancement of the
notion of a level graph. 
We consider vertex weighted graphs of the form $\Gamma = (V,E,L,h)$ \emph{without edge lengths},
where $(V,E)$ is a classical graph, $L$ is a finite set of legs, i.e. infinite half-edges starting at a vertex, and $h$ is a marking of the vertices with integer numbers. 
After choosing a level function on the underlying graph $(V,E)$, we call $\oG$ a level graph. 
We divide every edge $e$ in two half-edges and write $\Lambda $ for the set containing all half-edges and all legs in $\Gamma$.
Note that every $\lambda \in \Lambda$ is adjacent to a unique vertex.
An  {\em enhanced level graph ~$\eG$} is a level graph ~$\oG$  
as above together with an assignment $k: \Lambda \to \ZZ$ such
that the following compatibility conditions hold.
\par
\begin{itemize}
\item[i)] If $e^+$ and $e^-$ are two half edges  forming an edge~$e$, 
then $k(e^+) + k(e^-) = -2$. An edge is horizontal if 
and only if $k(e^\pm) = -1$ for both its half-edges. Moreover,
if $e$ is a vertical edge consisting of the half-edges~$e^+$ 
leading downwards and $e^-$ leading upwards, then $k(e^+) > k(e^-)$.
\item[ii)] For each vertex $v \in V$ 
\be \label{eq:genussum}
\sum k(\lambda) \= 2h(v) -2\,,
\ee  
where the sum is over all $\lambda \in \Lambda$ which are adjacent to~$v$.
\end{itemize}
Let $v$ be a vertex in an enhanced level graph~$\eG$. Then we define the
type~$\mu(v)$ as the ordered tuple consisting of all~$k(\lambda)$,
where~$\lambda$ is a half-edge adjacent to~$v$. Note that~$\mu(v)$ is a
type in the sense of Definition~\ref{def:type}, if we replace $g$ by $h(v)$.
\par
This definition is motivated from the notion of twisted differentials.
In fact, given a stable curve~$X$, let  $\aG$ be the associated graph consisting of the dual graph of $X$ with legs attached for each marked point and $h$ given by the genera of the irreducible components $X_v$ for $v \in V$.
Note that up to the choice of a metric this is the same construction as in Section~\ref{section_tropdivisor}. Every $\lambda$ in $\Lambda$ gives rise to a point $z_\lambda$ in $X_v$, where $v$ is the vertex adjacent to $\lambda$: If $\lambda$ is a half-edge, we let $z_\lambda$ be the node corresponding to the edge containing $\lambda$, and if $\lambda$ is a leg, we let $z_\lambda$ be the associated smooth point. For any \twd~$\eta$ on $X$, we  decorate all $\lambda \in \Lambda$  with the order of  $\eta$ at $z_\lambda$.
This defines an enhanced level graph structure on $\Gamma$.
\par
\begin{Lemma} \label{le:fgivesLev}
Let $\Gamma$ be a tropical curve. To every element
$D = K_\Gamma + \div(f) \in |K_\Gamma|$ we can associate a natural structure
of an enhanced level graph~$\Gamma^+ = \Gamma^+(f)$ on some
realization of~$\Gamma$.
\end{Lemma}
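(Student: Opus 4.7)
The plan is straightforward: choose a sufficiently fine realization of~$\Gamma$, read off the level structure and the half-edge decorations directly from~$f$, and verify the axioms by a short slope computation.

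Since $f$ is piecewise affine with integer slopes, I can insert genus-zero two-valent vertices into any model of~$\Gamma$ to obtain a model $G=(V,E)$ on which $f$ is affine along every edge and such that the support of~$D$ lies in~$V$. On this refined model $K_\Gamma$ is still supported on~$V$, since each new vertex contributes $2\cdot 0-2+2=0$. Now assemble $\Gamma^+(f)$ as follows: for every $v\in V$ with $D(v)>0$ attach $D(v)$ legs at~$v$, each decorated by $k(\lambda)=1$; use $f$ to define a full order on $V$ by $v\preccurlyeq w\iff f(v)\le f(w)$, realized by any integer-valued level function $\ell\colon V\to\ZZ_{\le 0}$; and for a half-edge $\lambda$ of an edge at a vertex~$v$ set
\[
k(\lambda) \= -\,1 \m s_\lambda(f),
\]
where $s_\lambda(f)\in\ZZ$ is the outgoing slope of $f$ at~$v$ along that edge. (Attaching one leg of enhancement $D(v)$ would work equally well; the choice above realizes $\Gamma^+$ as a point of the principal stratum, which is what is used later in the paper.)

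The verification is then a short calculation. For condition~(i), if $e$ has half-edges $e^\pm$ with outgoing slopes $\pm s$, then $k(e^+)+k(e^-)=-2$; moreover $k(e^\pm)=-1$ iff $s=0$ iff $f$ is constant on~$e$ iff the endpoints share a level. For a vertical edge the upper endpoint has outgoing slope $-m<0$ and the lower has outgoing slope $+m>0$, so $k(e^+)=m-1>-m-1=k(e^-)$ with $e^+$ at the upper vertex, as required. For condition~(ii), the refined model gives $D(v)=K_\Gamma(v)+\ord_v(f)=2h(v)-2+|v|_E+\ord_v(f)$ (where $|v|_E$ counts only edge-half-edges), hence
\[
\sum_{\lambda\text{ at }v}k(\lambda)\=\sum_{\text{edges at }v}(-1-s_\lambda)\+D(v)\=-|v|_E\m\ord_v(f)\+D(v)\=2h(v)\m 2,
\]
since the leg contributions total $D(v)$ and the edge contributions $-|v|_E-\ord_v(f)$.

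No substantial obstacle arises; the proof is essentially bookkeeping, once the sign convention relating outgoing slopes to orders of a twisted differential is pinned down. The only mild point is naturality: two different admissible refinements yield enhanced level graphs that differ only by the two-valent genus-zero vertices introduced above, and contracting such vertices preserves $\ell$, $k$ and the leg data, so $\Gamma^+(f)$ is well defined up to this harmless stabilization.
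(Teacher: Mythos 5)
Your proposal is correct and follows essentially the same route as the paper: subdivide at the kinks of $f$, order the vertices by the values of $f$, attach $D(v)$ legs with decoration $1$ (the paper writes this count as $2h(v)-2+\sum_e(1+s(e))$, which equals $D(v)$), and decorate each non-leg half-edge by $-s(e)-1$ with the outgoing-slope convention. Your explicit verification of the two enhanced-level-graph axioms and the remark on independence of the refinement are just spelled-out versions of what the paper leaves as "follows immediately."
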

\par
\begin{proof} Let $\Gamma_0$ be the minimal realization of~$\Gamma$ subdivided 
with vertices at the places where~$f$ is not differentiable.
We use the function~$f$ itself to give~$\Gamma_0$ a full order
i.e.\ for nodes $v,w$ of $G_0$ we declare $v \succcurlyeq w$ if and only if
$f(v) \geq f(w)$. 
\par
We provide each vertex~$v$ of~$\Gamma_0$ with 
$2h(v) - 2 + \sum_e (1 + s(e))$ 
legs, each given the  decoration $k=1$.
Here the sum runs over all non-leg half-edges adjacent to~$v$ and $s(e)$ 
denotes the slope of~$f$ along~$e$, oriented to be pointing away
from~$v$. The fact that  $D \in |K_\Gamma|$ is equivalent to this number 
of legs being indeed non-negative for all vertices. We provide
each half-edge~$e$ which is not a leg, with $k = -s(e)-1$,
using the same orientation convention. The conditions for an
enhanced level graph now follow immediately.
\end{proof}
\par
We need two more notions to state our main theorem.
\par
\begin{Defi}
A vertex~$v$ of an enhanced level graph is called~\emph{inconvenient}
if $h(v)=0$ and if its type $\mu(v) = (m_1,\ldots, m_n)$ has the property
that $p_1 = 0$  and there exists an index~$i$ such that the
inequality~\eqref{eq:micond} holds.
\end{Defi}
\par
A cycle is called \emph{simple}, if it does not visit any vertex more
than once. Recall the tropicalization map
$\trop_\Omega: \PP\omoduli[g]^\an \to \proj\Lambda_g^\trop $
from Proposition~\ref{prop:tropomega}. 
\par
\begin{Thm}\label{thm:realizecrit}
Suppose that $k$ is an algebraically closed field of characteristic zero. A pair $(\Gamma, D)$ with $D = K_\Gamma + \div(f)$ in the tropical canonical
linear series lies in the image of $\trop_\Omega$ if and only if 
the following two conditions hold: 
\begin{itemize}
\item[i)] For every inconvenient vertex~$v$ of $\Gamma^+(f)$
there is a simple cycle $\gamma \subset \Gamma$ based at~$v$ 
that does not pass through any node at a level smaller than~ $f(v)$.
\item[ii)] For every horizontal edge~$e$ there is a simple cycle 
$\gamma \subset \Gamma$ passing though~$e$ which
does not pass through any node at a level smaller than $f(e)$.
\end{itemize}
\end{Thm}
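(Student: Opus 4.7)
The plan is to apply the compactification result of BCGGM (Theorem~\ref{thm:bcggm}) to translate realizability of $(\Gamma, D)$ into the existence of a compatible twisted differential satisfying the global residue condition on some stable nodal curve. More precisely, after rescaling the level function $f$ by a sufficiently divisible integer (which corresponds to rescaling $\Gamma$ by a constant and so does not affect the projective class in $\PP\omoduli[g]^{trop}$), the plumbing construction of Theorem~\ref{thm:bcggm}(iii) takes a twisted differential on a stable curve compatible with an integer level function and produces a one-parameter family whose generic fiber realizes the prescribed enhanced level graph. Using the closedness of $\trop_\Omega$ (Proposition~\ref{prop:tropomega}) together with the fact that conditions (i) and (ii) cut out a closed subcomplex of $\PP\omoduli[g]^{trop}$, it suffices to prove the theorem over $\CC$ and for $(\Gamma, D)$ with rational edge lengths, where this plumbing is available.

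For the necessity direction, suppose $(\Gamma, D) = \trop_\Omega(X, K_X)$. Pick a stable model of $X$ over a DVR; by Theorem~\ref{thm:bcggm} the scaling limits assemble into a twisted differential $\eta = \{\eta_v\}$ compatible with $\Gamma^+(f)$ and satisfying GRC. If condition (i) fails at an inconvenient vertex $v$ at level $\ell$, then since $v$ is inconvenient it carries no simple poles (in particular no horizontal edges at level $\ell$ or loops at $v$), and its poles come entirely from up-edges. The GRC applied at level $\ell$ to the components of $C_{>\ell}$ adjacent to $v$, combined with the residue theorem on $C_v\cong \PP^1$, forms a linear system that, precisely when no cycle through $v$ exists in $\Gamma_{\geq \ell}$, forces every residue of $\eta_v$ to vanish, contradicting Proposition~\ref{prop:ressurj}(iii). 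If condition (ii) fails at a horizontal edge $e$ at level $\ell$, one argues analogously: iterated GRC on the two sides of the bridge $e$ in $\Gamma_{\geq \ell}$, together with residue theorems on the components and MRC at loops, forces the residue at $e$ to be zero, contradicting the simple pole at the half-edges of $e$.

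For sufficiency, assume (i) and (ii). I would produce the compatible twisted differential on a suitable stable curve $C$ with dual graph $\Gamma$ in two stages. First, regard the required residues as a $\CC$-valued assignment to the half-edges with $k \leq -1$, subject to the linear equations of MRC, GRC, and the residue theorem on each component. Proposition~\ref{prop:ressurj} then allows us to realize any such solution as differentials $\eta_v$ on the normalizations, provided that at every horizontal edge the assigned residue is nonzero and at every inconvenient vertex the residues are not all zero. Second, working level by level from the top down, the cycles guaranteed by (i) and (ii) produce the circulations of residues that support precisely these nonvanishing requirements without disturbing GRC at higher levels. With $(C,\eta)$ in hand, Theorem~\ref{thm:bcggm}(iii) plumbs it into a family whose generic fiber realizes $(\Gamma, D)$, completing the proof.

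The main obstacle is the combinatorial construction of the residue assignment in the sufficiency direction: one must verify that (i) and (ii) are not merely necessary but also together strong enough to make the (highly underdetermined) GRC linear system admit a solution satisfying all nonvanishing constraints simultaneously, and that no subtler cycle condition at intermediate levels is needed. I would handle this via an inductive analysis on the poset of level subgraphs $\Gamma_{\geq L}$, controlling the corank of the local GRC equations at each level in terms of the first Betti number of $\Gamma_{\geq L}$ and using the basic cycles supplied by (i), (ii) as generators of the required residue circulations.
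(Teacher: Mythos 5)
Your overall skeleton (reduce to $k=\CC$ and rational edge lengths via Proposition~\ref{prop:tropomega}, build a compatible twisted differential, then plumb via Theorem~\ref{thm:bcggm}) is the same as the paper's, but the heart of the sufficiency direction---actually producing a residue assignment satisfying MRC, GRC, the residue theorems \emph{and} the realizability constraints of Proposition~\ref{prop:ressurj}---is deferred to a proposed induction rather than carried out, and the one concrete claim you make about Proposition~\ref{prop:ressurj} is too strong. At a genus-zero vertex all of whose poles are simple and number more than two, the proposition only guarantees realizability of residue tuples that are $\RR$-linearly independent, so ``nonzero at horizontal edges and not all zero at inconvenient vertices'' is not a sufficient list of nonvanishing requirements. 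This is precisely why the paper's construction adds circulations $\pm\alpha_i$, $\beta_j$ along the cycles furnished by i) and ii), with coefficients chosen so that no subset sum of $\pm\alpha_i,\pm\beta_j$ is real: this single generic choice prevents unintended cancellations, gives the required nonvanishing, yields $\RR$-independence at simple-pole-only vertices, keeps the residue theorem (each cycle enters and leaves every vertex equally often), and---since each cycle stays at levels $\geq f(v_i)$ resp.\ $f(e_j)$---preserves the GRC at every level. Without such an explicit construction and verification the sufficiency direction is not proved; note also that when invoking Theorem~\ref{thm:bcggm}(iii) you must take $\ell=f$ and $n_e=|e|$ so that the plumbed family has the prescribed edge lengths, not merely the prescribed enhanced level graph.

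Your necessity argument is genuinely different from the paper's: you use the GRC as a black box and try to deduce i) and ii) combinatorially, whereas the paper returns to the degenerating family and reruns the Stokes/vanishing-cycle argument underlying the GRC. The combinatorial route can be made to work, but not with the linear system you name: for an inconvenient vertex $v$ at level $L$, the GRC for the components of $C_{>L}$ adjacent to $v$ together with the residue theorem on $C_v$ does \emph{not} force the residues of $\eta_v$ to vanish, because those components may also meet $C_{=L}$ at other vertices whose residues enter the same equations. One must instead cut along the relevant up-edge $e$ (a bridge of $\Gamma_{\geq L}$ when i) fails), sum the residue theorems over all level-$L$ vertices of the branch not containing $v$, cancel horizontal contributions by the MRC, and apply the GRC to \emph{every} component of $\Gamma_{>L}$ inside that branch; only then does $\Res_{q^-(e)}\eta_v=0$ follow, contradicting Proposition~\ref{prop:ressurj} (the horizontal-edge case is analogous, as you indicate). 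Finally, you implicitly identify the level and enhancement data of the scaling limits with $\Gamma^+(f)$; this ``slope claim'' (that the level difference across an edge equals $|e|\,(k(e^+)+1)$, so that the specialization of $K_X$ is indeed $K_\Gamma+\div(f)$ with $f$ the level function) requires the local plumbing computation and is proved separately in the paper---it does not come for free from Theorem~\ref{thm:bcggm}.
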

\par
\begin{figure}[htb]
	\centering
	\begin{minipage}[t]{0.4\textwidth}
		\centering
		\begin{tikzpicture} [scale=2]
		\tikzstyle{every node}=[font=\normalsize]
		
		\draw[rotate around={20:(.64,-.6)}] (.3,-.5) ellipse (.35cm and .05cm); 
		\begin{scope}[xshift=1.7cm]
		\draw[rotate around={20:(.64,-.6)}] (.3,-.5) ellipse (.35cm and .05cm); 
		\end{scope}
		\begin{scope}[xshift=1.7cm,yshift=1cm]
		\draw[rotate around={20:(.64,-.6)}] (.3,-.5) ellipse (.35cm and .05cm); 
		\end{scope}
		\begin{scope}[yshift=1.2cm]
		\draw[rotate around={20:(.64,-.6)}] (.3,-.5) ellipse (.35cm and .05cm); 
		\end{scope}
		
		\path[draw,thick] (.599,-.5) node(A)[inner sep=0]{} [dotted] -- ++(0,1.2) node(B)[inner sep=0]{};
		\path[draw] (.599,.7) -- ++(-45:1.356) node(C)[inner sep=0]{};
		\path[draw] (C) [inner sep=0]{}-- ++(45:1.05) node(F)[inner sep=0]{}; 
		
		\draw [dotted,thick] (-.04,-.75) node(H)[inner sep=0]{} -- (-.04,.46) node(J)[inner sep=0]{};
		
		\draw (-.04,.46) -- ++(-45:.8) node(K)[inner sep=0]{} 
		-- ++(0:.78) node(L)[inner sep=0]{}
		-- ++(45:.5) node(D)[inner sep=0]{};		
		
		\draw [dotted,thick] (1.66,.25) -- (1.66,-.75) node(I)[inner sep=0]{};
		
		\draw [dotted,thick] (2.3,.5) node(F)[inner sep=0]{} -- (2.3,-.5) node(G)[inner sep=0]{}; 		
		
		\path[draw, name path=lo] (.58,-.495) -- (2.3,-.495); 
		\path[draw, name path=lu] (-.04,-.75) -- (1.67,-.75); 
		
		\draw[] (C) -- ++(225:.2)
		(C) -- ++(-45:.2)
		(K) -- ++(225:.2)
		(L) -- ++(-45:.2); 
		
		\path [draw,dotted,thick, name path=p3] (C) --++ (-90:.25) node(N)[inner sep=0]{};
		\fill[name intersections={of=lo and p3}] (intersection-1) circle (1pt)
		node[below,yshift=-2pt]{2};
		\path [draw,dotted,thick, name path=p1] (K) --++ (-90:.86) node(O)[inner sep=0]{};
		\fill[name intersections={of=lu and p1}] (intersection-1) circle (1pt);                	
		\path [draw,dotted,thick, name path=p2] (L) --++ (-90:.86) node(P)[inner sep=0]{};	
		\fill[name intersections={of=lu and p2}] (intersection-1) circle (1pt);    					
		
		\fill 
		(B) circle (1pt) (C) circle (1pt) 
		(D) circle (1pt)  (F) circle (1pt) 
		(J) circle (1pt) (K) circle (1pt) (L) circle (1pt);
		\draw (-.3,.65) node {$\Gamma^+$};
		\draw (-.3,-.85) node {$\Gamma^{\phantom{+}}$};
		
		\end{tikzpicture}
	\end{minipage}
	\caption{Illustration of the edge condition}
	\label{cap:IllEdge}
\end{figure}
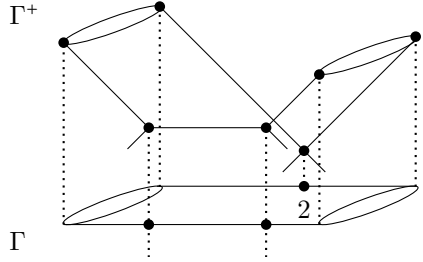
\par
Figure~\ref{cap:IllEdge} illustrates the conditions of the theorem. The
value of~$f$ is given by the height of the point in $\Gamma^+(f)$ over its
image point in~$\Gamma$. In this example there is
a simple cycle through the horizontal edge in the foreground. However
all the possible simple cycles through this edge pass through
the vertex with two markings, which is at a lower level. Consequently,
this graph is not realizable. Note also that realizability
depends on the edge lengths here: If the edge containing
the vertex with two markings were shorter (and all the other
lengths remained the same), the corresponding vertex could be at a level
above the horizontal edge and the corresponding divisor would be realizable.
\par
\begin{proof}[Proof of Theorem \ref{thm:realizecrit}]
Let us first assume that $k = \mathbb{C}$.
The conditions i) and ii) define a closed subset of $\PP\omoduli[g]^{trop}$. Tropical curves~$\Gamma$ with rational edge lengths and divisors $D = K_\Gamma + \div(f)$ associated to a function~$f$ are dense in this subset. Since $\trop_\Omega$ is continuous and closed by Proposition~\ref{prop:tropomega}, the image of $\trop_\Omega$ is the closure of this locus. We may therefore assume that $\Gamma$ has rational edge lengths. Moreover, if $(\Gamma, K_\Gamma+\div(f))$ is realizable, we may rescale the edge lengths by a global constant and still obtain a realizable object in $\PP\omoduli[g]^\trop$. Therefore we may assume that $\Gamma$ has integral edge lengths.
\par
Suppose that the enhanced level graph $\Gamma^+(f)$ associated with~$D$
satisfies i) and ii) and the integrality hypothesis made above on~$\Gamma$. We want to show that there is a twisted differential
of type $\mu = (1,1,\ldots,1)$ on a stable
pointed curve~$C$ with dual graph $\Gamma$, compatible
with the enhanced level structure~$\Gamma^+(f)$ and then apply
the 'converse' implication in Theorem~\ref{thm:bcggm}. 
For every vertex~$v$ this amounts to finding a differential of type $\mu(v)$ 
on some smooth curve~$C_v$.
This is indeed the type of a meromorphic differential on~$C_v$ by property~ii)
of an enhanced level graph. The matching order condition and the partial
order condition of a twisted differential are built into the 
condition~i) of an enhanced level graph. 
\par
Hence the main point is to choose the curves~$C_v$ and the
differential~$\eta_v$ such that the matching residue condition (MRC)
and the GRC can be satisfied. For this purpose, we want to apply 
Proposition~\ref{prop:ressurj}. By the following procedure
we specify residues which on the one hand lie in $H \cap N$ at every
node and satisfy both MRC and GRC, and which on the other hand 
are non-zero at inconvenient nodes, and match the conditions of the
last item in Proposition~\ref{prop:ressurj} at nodes with only simple poles.
\par
Take the cycles $\gamma_i, i \in I_t$  corresponding to inconvenient vertices
$v_i$ by condition~i) and the cycles $\delta_j, j \in J_h$  corresponding to
horizontal edges~$e_j$ by condition~ii) and provide them with some orientation.
Let $\{\alpha_i , i \in I_t\} \cup \{\beta_j, j \in J_h\}$ be a
collection of complex numbers such that no sum of a subset of the $\pm \alpha_i$
and $\pm \beta_j$ is real. (I.e.\ the $\alpha_i$ and $\beta_j$ 
lie in a complement of a finite union
of real codimension one hyperplanes in $\CC^{|I_t|+|J_h|}$.)
Starting from residue zero at each edge we increase, for every $i \in I_t$, 
the prescribed residue at all the half-edges~$e$ with
$f(e) = f(v_i)$ by $\alpha_i$ if the outward pointing orientation of~$e$
agrees with the orientation of the cycle and by $-\alpha_i$ otherwise. 
For every $j \in J_h$ 
we increase the prescribed residue at all the half-edges~$e$ with
$f(e) = f(e_j)$ by $\beta_j$. The collection of residues prescribed
in this way is non-zero for every horizontal edge (by the choice
of the cycles $\delta_j$ and since the choice of the $\alpha_i$ and $\beta_j$
avoids unintended cancellations), it is non-zero at every inconvenient
node (by the choice of the cycles $\gamma_i$) and satisfies the
residue theorem (since a cycle enters and exits any vertex the
same number of times), i.e.\ the prescribed residues lie in 
the image of the residue map at each inconvenient node
by Proposition~\ref{prop:ressurj}. At each of the vertices with
only simple poles, i.e.\ with $ p = p_1$,
the residues are non-zero and $\RR$-linearly independent
(if there is more than one pair of such poles, i.e.\ if $p_1>2$ at
such a vertex). Consequently, by Proposition~\ref{prop:ressurj}, the
residues lie in the image of the residue map at each vertex.
Finally, we check that the GRC continues to hold at each step of adding
the contributions along a cycle $\gamma_i$ or $\delta_j$. We give the
details for the first case, the second being the same, replacing $f(v_i)$
by $f(e_j)$ everywhere. In fact, the addition procedure prescribes a zero
total sum of residues to each of the component of $C_{>f(v_i)}$ the cycle
passes through, so the GRC holds for $C_{\geq f(v_i)}$. Since the cycle does
not pass through levels below $f(v_i)$, the GRC for those levels remains
valid. If~$w$ is a vertex with level $f(w) > f(v_i)$ then all the edges to
level~$f(v_i)$ are unseen in the GRC for $C_{\geq f(w)}$ and hence the GRC
for these levels continues to hold, too.
\par
Consequently, we can now use Theorem~\ref{thm:bcggm} with the level
function $\ell = f$ and with $n_e = |e|$, the length in $\Gamma$ for any
horizontal edge. The conclusion of the theorem is precisely that
the divisor $D + \div(f)$ is the specialization of an effective
canonical divisor on a graph equivalent to~$\Gamma$, with all the 
lengths rescaled by the integer~$N$ of Theorem~\ref{thm:bcggm}~iii).
Hence $(\Gamma,D)$ lies in the image of $\trop_\Omega$ by 
definition of this map in Equation~\eqref{eq:defTropOmega}.
\par 
\medskip
Let us now show the converse implication. Points of the form $(X,D)$, 
where $X$ is the smooth generic fiber of a stable curve $\cXX$ over the 
valuation ring $R$ of a finite extension of $\CC(t)$ are dense
in $\proj \omoduli[g]^\an$. Since $\trop_\Omega$ is continuous by 
Proposition~\ref{prop:tropomega},  it suffices to show that $\trop_\Omega(X,D)$
satisfies  conditions i) and ii) in our claim. Denote by~$S$ the spectrum 
of~$R$. Moreover, let $\omega$ be a stable differential
on $\cXX$ such that the divisor of its generic fiber is $D$. 
We may assume, by the density of the principal stratum,
that ${\rm div}(\omega)^{\rm hor} = \sum_{i=1}^{2g-2} s_i(S)$ consists of
$2g-2$ images of sections.
\par
Let $\ell$ be the level function on the dual graph~$\Gamma$
of the special fiber given by the scaling parameters of this family
(cf.\ Section~\ref{sec:charlimit}).
Let $\eta$ be the \twd on the special fiber~$C$ of $\cXX$, obtained as the
scaling limit of $\omega$, and let $\Gamma^+(\ell)$ be the enhanced
level graph given by~$\ell$ and the enhancement given by~$\eta$, 
as described before Lemma~\ref{le:fgivesLev}. We want to show that 
$\trop_\Omega(X,D)  = (\Gamma, K_\Gamma + {\rm div}(\ell))$
that the enhanced level graph~$\Gamma^+(\ell)$ satisfies i) and~ii). 
\par
Concerning the first claim, we note
that by definition of $\eta$ exactly $2h(v)-2 - \sum k(e)$ sections
of~ $\omega$ lie in the irreducible component $C_v$ of the special fiber
associated to the vertex~$v$, counted with multiplicity.
Here the sum runs over all half-edges adjacent to~$v$ which are not
legs at~$v$. 
By definition of $\trop_\Omega$ in~\eqref{eq:defTropOmega} the first claim thus
amounts to showing that the slope defined by~$\ell$ of non-leg half edges~$e$
is equal to~$- k(e) -1$. This in turn is a consequence of the way degenerating
families are built by plumbing (cf.\ \cite{bcggm} Theorem~4.5). The core
observation is that at a node $xy=t^a$ the differential on the two
ends of the node is $(x^k+t^{a(k+1)}\tfrac{r}x)dx$ and
$-t^{a(k+1)}\cdot (y^{-k-2} + \tfrac{r}y) dy$, where~$t^{a(k+1)}r$ is the period
of~$\omega$ along the vanishing cycle. Consequently, the difference of
scaling parameters (or, equivalently, values of the level function) 
is $a(k+1)$, proving the slope claim. 
\par
To show i), we work with the complex topology. Note that there exists a disc
$\Delta$ in $\CC$ such that $\cXX(\CC)$ can be extended to a complex analytic space
$\cXX_\Delta(\CC)$ over $\Delta$. At every inconvenient vertex~$v$ the restriction
of~$\eta$ to~$v$ has non-zero residue at $q^-(e)$ for some edge ~$e$
with $k(e)<0$  by Proposition~\ref{prop:ressurj}. To illustrate the idea
for constructing the necessary cycle, let $e_1,\ldots,e_m$ be the edges adjacent to~$v$
with $k(e)<0$. Choose a continuously varying family $\beta_j(s)$ for $s \in \Delta$ 
of simple closed curves in the fibers $\cXX_s $ belonging to  the homotopy class
 which is pinched to the node~$e_j$. If all the curves $\beta_j(s)$
are separating for one (hence every) $s \in \Delta$, the period
of~$\omega_s$ around~$\alpha_0$ is zero by Stokes' theorem. This gives
a contradiction to the non-zero residue in the limit. Consequently, 
there is some non-trivial cycle~$\gamma$ passing through~$v$.
\par
To deal with the general case and to derive the claimed property of~$\gamma$
we revisit the proof of the GRC, compare \cite{bcggm} Section~4.1. 
Let $A = \alpha_1(s) \cup \ldots \cup \alpha_m(s)$ be the union of simple
closed curves which are pinched (when $s \to 0)$ to the nodes
joining a level $\geq \ell(v)$ to a level $<\ell(v)$.
If $\beta_j(s)$ is separating the connected component
of $X_s \setminus A$ it lies in, we obtain the same contradiction
from Stokes' theorem as 
before. In fact, let $I \subset \{0,\ldots,s\}$ be the index set of
curves bounding a component of $X_s \setminus (A \cup \beta_j)$. Then
$$ \int_{\beta_j} t^{\ell(v)} \omega(t)  \,+\,
\sum_{i \in I} \int_{\alpha_i} t^{\ell(v)} \omega(t) \= 0 $$
by Stokes. The first term of this sum tends to the residue we are
interested in. The other terms tend to the residue of the
limiting twisted differential at level~$\geq \ell$ at a node
corresponding to an edge to level $<\ell$, which is zero since the
limiting differential is holomorphic there (see the condition
'partial order' in the definition of twisted differential
in Section~\ref{sec:twdif}).
Consequently, if some $\beta_j(s)$ is not
separating its connected component in $X_s \setminus A$, there exists a
cycle as claimed in~i). The argument for horizontal edges is the same
and gives~ii).
\medskip
\par
Hence we have proved the theorem in the case $k = \CC$. If  $K \subset L$ 
is a field extension of two trivially valued algebraically closed fields of characteristic zero, we have a natural surjective projection map $(\PP\omoduli[g,L])^\an \rightarrow  (\PP\omoduli[g,K])^\an$ which is compatible with the tropicalization map, i.e. the diagram 
\begin{equation*}
\begin{tikzcd}
 (\PP\omoduli[g,L])^\an \arrow[dr,"\trop_{\Omega}"]\arrow[d]  &    \\
  (\PP\omoduli[g,K])^\an \arrow{r}[swap]{\trop_{\Omega}}  & \proj\omoduli[g]^\trop \\ 
\end{tikzcd}
\end{equation*}
is commutative. Hence the realizability locus does not depend on the choice 
of the algebraically closed ground field of characteristic zero, which implies 
our claim.
\end{proof}
\par
\begin{figure}[htb]
	\centering
	\begin{minipage}[t]{.38\linewidth}
		\centering
		\begin{tikzpicture}[scale=2]
        \tikzstyle{every node}=[font=\normalsize]
        \draw (1.3,-.7) node[] {\phantom{}}; 
        \draw (2.3,1.3) node[] {\phantom{}}; 
        
        \draw (.3,-.5) ellipse (.3cm and .05cm); 
        \draw (2,-.5) ellipse (.3cm and .05cm); 
        \draw (2,1.26) ellipse (.3cm and .05cm); 
        \draw ($(.3,-.2) + (70:.3cm and .05cm)$(P) arc (70:250:.3cm and .05cm);
		\path[draw,thick] (.599,-.5) node(A)[inner sep=0]{} [dotted]
		-- ++(0,.66) node(B)[inner sep=0]{};
		\path[draw,thick] (.599,-.5) node(A)[inner sep=0]{} [dotted]
		-- ++(0,.66) node(B)[inner sep=0]{};
        
		\path[draw,thick] (1.7,1.26) node(D)[inner sep=0]{} [dotted]
		-- ++(0,-1.76) node(E)[inner sep=0]{}; 
        \path[draw] (E) [] -- ++(-1.1,0);
        %
		\path[draw,dotted,thick] (.2,-.54) node(P1)[inner sep=0]{}
		-- ++ (.2,.08) node(P2)[inner sep=0]{} 
        -- ++ (0,.3) node(P3)[inner sep=0]{} -- ++ (-.2,-.08) node(P4)[inner sep=0]{} -- ++ (0,-.3);
		\fill 
		(B) circle (1pt)
		(D) circle (1pt);
        \fill (P1) circle (1pt) (P2) circle (1pt) (P3) circle (1pt) (P4) circle (1pt);
        \path[draw] (P3) [] -- ++(-45:.15);
        \path[draw] (P4) [] -- ++(-45:.15);
        \path[draw] (P4) [] -- ++(45:2.1);
        \draw (P3) -- (B);
   		\draw (-.3,.65) node {$\Gamma^+$};
   		\draw (-.3,-.5) node {$\Gamma^{\phantom{+}}$};
		\end{tikzpicture}
	\end{minipage}%
	\hspace{15mm}
	\begin{minipage}[t]{.38\linewidth}
		\centering
		\begin{tikzpicture}[scale=2]
        \tikzstyle{every node}=[font=\normalsize]
        \draw (1.3,-.7) node[] {\phantom{}}; 
        \draw (2.3,1.3) node[] {\phantom{}}; 

        \draw (.3,-.5) ellipse (.3cm and .05cm); 
        \draw (2,-.5) ellipse (.3cm and .05cm); 
        \draw (2,.5) ellipse (.3cm and .05cm); 
        \draw (.3,.7) ellipse (.3cm and .05cm); 
        \path[draw,thick] (.599,-.5) node(A)[inner sep=0]{} [dotted] -- ++(0,1.2) node(B)[inner sep=0]{};
        
        \path[draw] (B) -- ++(-45:.92) node(C)[inner sep=0]{};
        \path[draw] (C) [dotted] -- ++(-90:.56) node(H) [inner sep=0]{};
		\path[draw] (C) -- ++(45:.64) node(D)[inner sep=0]{}; 
        \path[draw,thick] (D) [dotted] -- ++(0,-1) node(E)[inner sep=0]{}; 
        \path[draw] (E) [] -- ++(-1.1,0);
        
        \path[draw] (C) [] -- ++(-45:.15);        
        \path[draw] (C) [] -- ++(225:.15);      
      
		\fill 
		(B) circle (1pt) (C) circle (1pt) 
        (D) circle (1pt) (H) circle (1pt) node[below] {$2$};
   		\draw (-.3,.65) node {$\Gamma^+$};
   		\draw (-.3,-.5) node {$\Gamma^{\phantom{+}}$};
		\end{tikzpicture}
	\end{minipage}
\caption{Realizability locus over the dumbbell graph.}
\label{cap:DumbOK}
\end{figure}
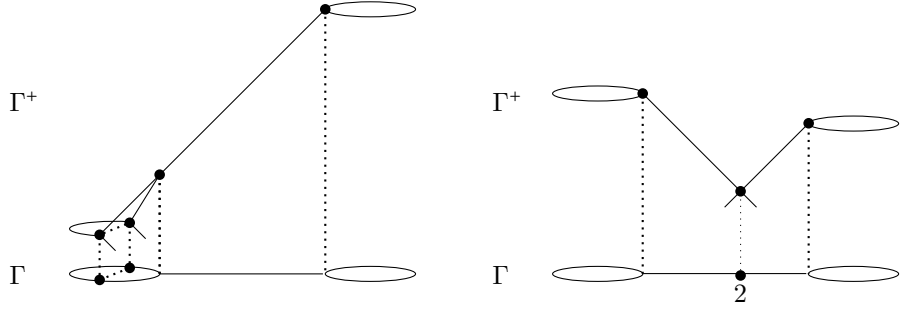
\par
\begin{Ex} \label{ex:dumbbell}
{\rm Figure~\ref{cap:DumbOK} shows points in the realizability locus
over the dumbbell graph in genus two, i.e.\ all vertex genera zero.
Those points in the realizability locus consist
of two symmetrically placed marked
points on either dumbbell end (left picture) or double point
anywhere on the central edge (including the ends) of the dumbbell
(picture on the right).
A canonical divisor whose support consists of two different points
on the central edge of the dumbbell (see Figure~\ref{cap:DumbBAD})
is not in the realizability locus, since the edge between those
two points is horizontal in the enhanced level graph and separating,
thus violating condition~ii) in Theorem~\ref{thm:realizecrit}.
These two figures are reinterpretation of the corresponding figures
in~\cite{GaKe} in our viewpoint of level graphs.
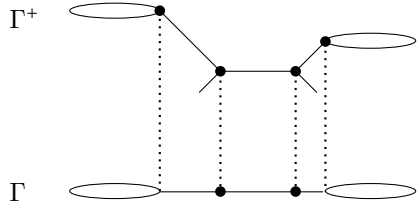
\begin{figure}[htb]
	\centering
	\begin{minipage}[t]{.40\linewidth}
		\centering
		\begin{tikzpicture}[scale=2]
        \tikzstyle{every node}=[font=\normalsize]
        \draw (.3,-.5) ellipse (.3cm and .05cm); 
        \draw (2,-.5) ellipse (.3cm and .05cm); 
        \draw (2,.5) ellipse (.3cm and .05cm); 
        \draw (.3,.7) ellipse (.3cm and .05cm); 
        \path[draw,thick] (.599,-.5) node(A)[inner sep=0]{} [dotted] -- ++(0,1.2) node(B)[inner sep=0]{};
        \path[draw] (B) -- ++(-45:.57) node(C)[inner sep=0]{};
        \path[draw] (C) -- ++(0:.5) node(D)[inner sep=0]{}; 
        \path[draw] (D) -- ++(45:.28) node(E)[inner sep=0]{}; 
        \path[draw,thick] (E) [dotted] -- ++(0,-1) node(F)[inner sep=0]{};
        \path[draw] (F) [] -- ++(-1.1,0);
        \path[draw] (C) [] -- ++(225:.2);
        \path[draw] (D) [] -- ++(-45:.2);        
        \path[draw,thick] (C) [dotted] -- ++(90:-.8) node(G)[inner sep=0]{};
        \path[draw,thick] (D) [dotted] -- ++(90:-.8) node(H)[inner sep=0]{};
		\fill 
		(B) circle (1pt) (C) circle (1pt) (D) circle (1pt) 
        (E) circle (1pt) (G) circle (1pt) (H) circle (1pt);;
   		\draw (-.3,.65) node {$\Gamma^+$};
   		\draw (-.3,-.5) node {$\Gamma^{\phantom{+}}$};
		\end{tikzpicture}
	\end{minipage}%
	\caption{A non-realizable configuration over the dumbbell graph.}
	\label{cap:DumbBAD}
\end{figure}
}\end{Ex} 
\par

\begin{proof}[Proof of Proposition~\ref{prop:tropomega}, alternative
proof that the image of $\trop_\Omega$ belongs to $|K_\Gamma|$]
We need to show that for any given graph~$\Gamma$ there exists
a rational function $f \in \Rat(\Gamma)$ with $K_\Gamma + \div(f) \geq 0$ 
such that $\Gamma^+(f)$ satisfies conditions i) and ii) of the 
preceding theorem. We prove this by induction on the genus. Since
adding marked points and increasing the vertex genus can only improve
the situation concerning inconvenient nodes, it suffices to treat
the case that all vertex genera $h(v) = 0$.
\par
For $g=2$, there are two cases. For the graph with three nodes
joining the two vertices (and in general: for any graph $\Gamma$
without separating edges), the canonical divisor $K_\Gamma$ is in the
image of $\trop_\Omega$. For the dumbbell graph, we take a function~$f$
that is constant on the edges and has a global minimum on the separating
edge, see the picture on the right hand side of Figure~\ref{cap:DumbOK}.
\par
In the induction step, we consider a graph~$\Gamma$ of genus~$g$
and remove a non-separating edge~$e$. Let $\Delta = \Gamma -e$ be the resulting
graph. There are two cases to consider. First, suppose that the two
ends of~$e$ are different nodes in~$\Delta$. Then $\Delta$ is
semistable and we start with the $f_0$ given by induction on
the stable graph equivalent to~$\Delta$. We complete this to a
function~$f$ on~$\Gamma$ having slope $\geq -1$ on each half-edge of~$e$.
(This is possible for all values of~$f_0$ at the ends of~$e$.)
Together with the induction hypothesis this condition implies 
$K_\Gamma + \div(f) \geq 0$.
Neither a horizontal separating edge nor a trivalent vertex with
negative decorations has been added, hence conditions i) and ii)
continue to hold.
\par
Second, suppose that~$e$ is a cycle, adjacent to some vertex~$v$. 

If~$\Delta=\Gamma-e$ is semi-stable, we simply declare $f$ to be constant on~$e$.
Otherwise, there is a separating edge~$e_s$ ending at~$v$ and
$\Delta \setminus e_s$ is semi-stable. In this case we take~$f_0$
from $\Delta \setminus e_s$ by induction and complete it to~$f$
constant on~$e$ and with slopes~$-1$ on the two half-edges of~$e_s$
(i.e.\ $\div(f)$ contains twice the midpoint of $e_s$). The
conditions i), and ii) and $K_\Gamma + \div(f) \geq 0$ follow from 
the construction.
\end{proof}
\par
\begin{figure}[htb]
	\centering
	\begin{minipage}[t]{0.38\textwidth}
		\centering
		\begin{tikzpicture} [scale=2]
		\tikzstyle{every node}=[font=\normalsize]
	    \draw (1,-1.1) node[] {\phantom{}}; 
		\path[draw] (0,-.9) node(A)[inner sep=0]{} 
		--++(70:.33) node(B)[inner sep=0]{}
		--++(-8:.9) node(C)[inner sep=0]{}
		--++(A);
		\path[draw] (0,.5) node(A')[inner sep=0]{} 
		--++(70:.33) node(B')[inner sep=0]{}
		--++(-8:.9) node(C')[inner sep=0]{}
		--++(A');
		
		\draw (A) -- ++(5.8:1.88) node(D)[inner sep=0]{} -- ++(B);
		\draw[] (C) -- (D) node [midway,xshift=-10pt,yshift=.-7pt]{\rotatebox{-50}{$\longleftarrow$}};
		\draw (C) -- (D) node [midway,below,xshift=0cm,yshift=-.3cm]{$e_6$};

		\draw (A') -- ++(-45:1) node(E)[inner sep=0]{}  -- ++(0:.8) node(F)[inner sep=0]{};
		\draw (B') -- ++(-45:1) node(G)[inner sep=0]{}   -- ++(0:.8) node(H)[inner sep=0]{};
		\draw (H) --++(45:.35) node(J)[inner sep=0]{};
		\draw (F) -- (J);
		\draw (C') -- (J);
		
		\draw[dotted,thick] (A) -- (A') (B) -- (B')	(C) -- (C'); 
		\draw[dotted,thick] (D) -- (J);
		\draw[dotted,thick] (E) --++(-90:.63) node(E')[inner sep=0]{};
		\draw[dotted,thick] (F) --++(-90:.545) node(F')[inner sep=0]{};
		\draw[dotted,thick] (G) --++(-90:.742) node(G')[inner sep=0]{};
		\draw[dotted,thick] (H) --++(-90:.79) node(H')[inner sep=0]{};
		
		\fill 
		(A') circle (1pt) (B') circle (1pt) (C') circle (1pt) 
		(E) circle (1pt) (F) circle (1pt) 
		(G) circle (1pt) (H) circle (1pt) 
		(J) circle (1pt)
		(E') circle (1pt) (F') circle (1pt) (G') circle (1pt) (H') circle (1pt);
		
		\path[draw] (E) -- ++(225:.2);
		\path[draw] (F) -- ++(-45:.2);
		\path[draw] (G) -- ++(225:.2);
		\path[draw] (H) -- ++(-45:.2);
		
		\draw (A) -- (B) node [midway,left]{$e_1$}
		(B) -- (C) node [midway,below,xshift=-.2cm,yshift=.1cm]{$e_2$}
		(C) -- (A) node [midway,above,xshift=-.5cm,yshift=-.15cm]{$e_3$}
		(B) -- (D) node [midway,above,xshift=-4pt]{$e_4$}
		(A) -- (D) node [midway,below]{$e_5$};
		
		\draw (-.3,.65) node {$\Gamma^+$};
		\draw (-.3,-.85) node {$\Gamma^{\phantom{+}}$};
		
		\end{tikzpicture}
	\end{minipage}
	\hspace{6mm}  
	\begin{minipage}[t]{0.34\textwidth}
		\centering
		\begin{tikzpicture} [scale=2]
		\tikzstyle{every node}=[font=\normalsize]
	    \draw (1,-1.1) node[] {\phantom{}}; 
		\path[draw] (0,-.9) node(A)[inner sep=0]{} 
		--++(70:.33) node(B)[inner sep=0]{}
		--++(-20:.4) node(C)[inner sep=0]{}
		--++(A);
		\path[draw] (0,.5) node(A')[inner sep=0]{} 
		--++(70:.33) node(B')[inner sep=0]{}
		--++(-20:.4) node(C')[inner sep=0]{}
		--++(A');
		
		\coordinate (D) at ($(A)+(5.8:1.88)$);		
		\draw (A) -- (D) node (lineA_D)[]{};
		\draw (D) -- (B) node (lineB_D)[]{} ;
		\draw (C) -- (D) node (lineC_D)[]{};
					
		\draw (B') -- ++(-45:.7) node(G)[inner sep=0]{} --++(0:1.25) node(J)[inner sep=0]{};
		\draw[] (C') --++(-45:.8) node(E_neu)[inner sep=0]{};
		\draw[] (A') --++(-45:.55) node(K)[inner sep=0]{} -- (J);
		\draw[] (J) --++(-135:.29) node(F_neu)[inner sep=0]{};
		\draw[] (E_neu) -- (F_neu);

		\draw[dotted,thick] (A) -- (A') (B) -- (B')	(C) -- (C'); 
		\draw[dotted,thick] (D) -- (J);
		\draw[dotted,thick] (G) --++(-90:.94) node(G')[inner sep=0]{};
		\draw[dotted,thick] (E_neu)-- (E_neu  |- lineC_D) node (E_neu')[]{} ;
		\draw[dotted,thick] (F_neu)-- (F_neu  |- lineC_D) node (F_neu')[]{} ;
	    \draw[dotted,thick] (K) --++(-90:.97) node(K')[inner sep=0]{};
		
		\fill 
		(A') circle (1pt) (B') circle (1pt) (C') circle (1pt) 
		(E_neu) circle (1pt) (F_neu) circle (1pt) 
		(G) circle (1pt) 
		(J) circle (1pt)
		(G') circle (1pt)
		(E_neu') circle (1pt)
		(F_neu') circle (1pt) (K) circle (1pt) (K') circle (1pt); 
		
		\path[draw] (E_neu) -- ++(225:.2);
		\path[draw] (F_neu) -- ++(-45:.2);
		\path[draw] (G) -- ++(225:.2);
		\path[draw] (K) -- ++(225:.2);

		\draw (-.3,.65) node {$\Gamma^+$};
		\draw (-.3,-.85) node {$\Gamma^{\phantom{+}}$};
		
		\end{tikzpicture}
	\end{minipage}

\caption{Realizable configurations of maximal dimension on~$K_4$:
  $3$-cycles on top level, edges with two points}
	\label{cap:K4realize1}
\end{figure}
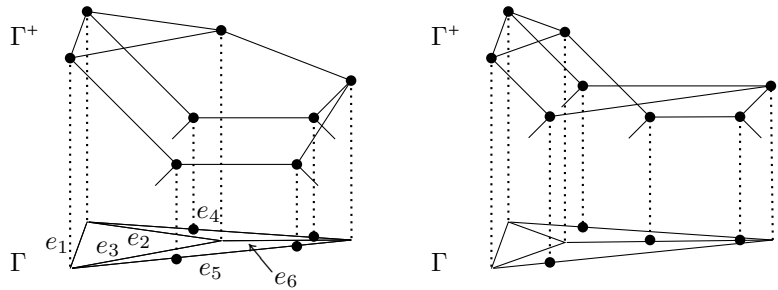

\begin{figure}[htb]
	\centering
	\begin{minipage}[t]{0.38\textwidth}
		\centering
		\begin{tikzpicture} [scale=2]
		\tikzstyle{every node}=[font=\normalsize]
		\draw (1,-1.1) node[] {\phantom{}}; 
		\path[draw] (0,-.9) node(A)[inner sep=0]{} 
		--++(70:.33) node(B)[inner sep=0]{}
		--++(-20:.4) node(C)[inner sep=0]{}
		--++(A);
		\path[draw] (0,.5) node(A')[inner sep=0]{} 
		--++(70:.33) node(B')[inner sep=0]{}
		--++(-20:.4) node(C')[inner sep=0]{}
		--++(A');
		
		\coordinate (D) at ($(A)+(5.8:1.88)$);		
		\draw (A) -- (D) node (lineA_D)[]{};
		\draw (D) -- (B) node (lineB_D)[]{} ;
		\draw (C) -- (D) node (lineC_D)[]{};
		
		\draw (B')  ++(-45:.6) node(G)[inner sep=0]{} ++(0:1.34) node(J)[inner sep=0]{};
		\draw[] (C') --++(-45:.4) node(E_neu)[inner sep=0]{};
		\draw[] (A') --++(-45:.55) node(K)[inner sep=0]{};
		\draw[blue] (K) --++(-45:.4) node(L)[inner sep=0]{};	
		\draw[] (A') -- (L);
		\draw[] (L) --++(0:.8) node(M)[inner sep=0]{};
		\draw[] (M) --++(45:.4) node(N)[inner sep=0]{};
		\draw[] (B') -- (J); 
		\draw (E_neu) -- (J);	
		\draw[] (J) -- (N); 			
		
		\draw[dotted,thick] (A) -- (A') (B) -- (B')	(C) -- (C'); 
		\draw[dotted,thick] (D) -- (J);
		\draw[dotted,thick] (E_neu)-- (E_neu  |- lineC_D) node (E_neu')[]{} ;
		\draw[dotted,thick] (N) --++(-90:.825) node(N')[inner sep=0]{};
		\draw[dotted,thick] (L)--++(-90:.66) node(L')[inner sep=0]{};		
		\draw[dotted,thick] (M)--++(-90:.58) node(M')[inner sep=0]{};		
		
		\fill
		(A') circle (1pt) (B') circle (1pt) (C') circle (1pt) 
		(E_neu) circle (1pt) 
		(J) circle (1pt)
		(E_neu') circle (1pt)
		(L) circle (1pt) (L') circle (1pt) (M) circle (1pt) (M') circle (1pt)
		(N) circle (1pt) (N') circle (1pt); 
		
		\path[draw] (E_neu) -- ++(225:.2);
		\path[draw] (M) -- ++(-45:.2);
		\path[draw] (L) -- ++(225:.2);
		\path[draw] (N) -- ++(-45:.2);
		
		\draw (-.3,.65) node {$\Gamma^+$};
		\draw (-.3,-.85) node {$\Gamma^{\phantom{+}}$};
		
		\end{tikzpicture}
	\end{minipage}
        \hspace{6mm}
        	\begin{minipage}[t]{0.38\textwidth}
		\centering
		\begin{tikzpicture} [scale=2]
		\tikzstyle{every node}=[font=\normalsize]
		\draw (1,-1.1) node[] {\phantom{}}; 
		\path[draw] (0,-.9) node(A)[inner sep=0]{} 
		--++(8:1.4) node(B)[inner sep=0]{}
		--++(-130:.4) node(C)[inner sep=0]{}
		--++(A);
		\path[draw] (B)	--++(-29:.4) node(D)[inner sep=0]{}
		--++(C);
		\path[draw] (0,.5) node(A')[inner sep=0]{} 
		--++(8:1.4) node(B')[inner sep=0]{}
		--++(-130:.4) node(C')[inner sep=0]{}
		--++(A');
		\path[draw] (B')	--++(-50:.55) node(D')[inner sep=0]{}
		--++(C');

		\draw (A) -- (D) node (lineA_D)[]{};
		\path[draw] (A') --++(-45:1.03) node(E)[inner sep=0]{} --++ (0:.76);
		\path[] (D') --++(-115:.55) node(F)[inner sep=0]{};
		
		\path[draw] (F) --++(45:.12) node(H)[inner sep=0]{}
		(H) --++(65:.2) node(G)[inner sep=0]{};
		\draw[] (G) -- (D');

		\fill (A') circle (1pt) (B') circle (1pt) (C') circle (1pt) 
		(D') circle (1pt) (E) circle (1pt) (F) circle (1pt)
		(G) circle (1pt) (H) circle (1pt);
		
		\draw[dotted,thick] (A) -- (A') (B) -- (B')	(C) -- (C') (D) -- (D'); 
		\draw[dotted,thick] (E)-- (E  |- lineA_D) node (E')[]{} ;
		\draw[dotted,thick] (F)-- (F  |- lineA_D) node (F')[]{} ;
		\draw[dotted,thick] (G)-- (G  |- lineA_D) node (G')[]{} ;
		\draw[dotted,thick] (H)-- (H  |- lineA_D) node (H')[]{} ;		
		
		\fill[] (E') circle (1pt) (F') circle (1pt) (G') circle (1pt) (H') circle (1pt); 
		\fill[] (G') circle (1pt) (H') circle (1pt);
		
		\path[draw] (E) -- ++(225:.2);
		\path[draw] (F) -- ++(-45:.2);
		\path[draw] (G) -- ++(-45:.2);
		\path[draw] (H) -- ++(-45:.2);
		
		\draw (-.3,.65) node {$\Gamma^+$};
		\draw (-.3,-.85) node {$\Gamma^{\phantom{+}}$};
		
		\end{tikzpicture}
	\end{minipage}
	        \caption{Realizable configurations of maximal
                  dimension on~$K_4$:
                 $3$-cycles on top level, more than two points on some edge}
	\label{cap:K4realize2}
\end{figure}
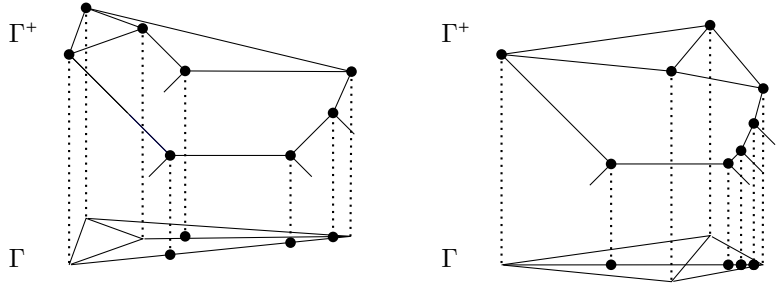

\begin{figure}[htb]
  \centering
\begin{minipage}[t]{0.36\textwidth}
	\centering
	\begin{tikzpicture} [scale=2]
	\tikzstyle{every node}=[font=\normalsize]
	\draw (1,-1.1) node[] {\phantom{}}; 
	
	\draw(0,.4) node(A)[inner sep=0]{} -- ++(-45:.8) node(B)[inner sep=0]{}  
	-- ++(0:.8) node(C)[inner sep=0]{} -- ++(45:.8) node(D)[inner sep=0]{} -- (A);
	
	\draw[](.4,.4) node(E)[inner sep=0]{} -- ++(-45:.55) node(F)[inner sep=0]{}  
	-- ++(0:.9) node(G)[inner sep=0]{} -- ++(45:.55) node(H)[inner sep=0]{} -- (E);
	
	\draw ($(1.04,.27) + (40:1.37cm and .2cm)$(I) arc (40:140:1.37cm and .4cm);
	
	\draw[dotted,thick] (B) --++(-90:.55) node(B')[inner sep=0]{} 
	(C) --++(-90:.55) node(C')[inner sep=0]{};
	
	\draw[draw=none] (B') -- (C');
	\path[draw, name path=1m] (0,-.665) -- (2.085,-.665);
	
	\path[draw,dotted,thick,name path=p9] (E) --++(-90:1.06) node(E')[inner sep=0]{}; 
	\path[draw,dotted,thick,name path=p10] (D) --++(-90:1.06) node(D')[inner sep=0]{};

	\begin{scope}[yshift=-1.09cm]
	\draw ($(1.03,.3) + (40:1.37cm and .2cm)$(I) arc (40:140:1.36cm and .4cm);
	\end{scope}
	\path[draw,name path=p1] ($(.73,-.54) + (-40:1.57cm and .2cm)$(J) arc (-40:-140:1.26cm and .4cm);
	
	\path[draw,name path=p6] ($(1.24,-.54) + (-40:1.1cm and .2cm)$(J) 
	arc (-40:-140:1.1cm and .44cm);
	
	\path[draw,dotted,thick,name path=p2] (B') --++(-90:.1) node(B'')[inner sep=0]{}; 
	\fill[name intersections={of=p1 and p2}] (intersection-1) circle (1pt);
	\path[draw,dotted,thick,name path=p3] (C') --++(-90:.08) node(C'')[inner sep=0]{};
	\fill[name intersections={of=p1 and p3}] (intersection-1) circle (1pt);
	\path[draw,dotted,thick,name path=p4] (F) --++(-90:.83) node(F')[inner sep=0]{}; 
	\fill[name intersections={of=p4 and p6}] (intersection-1) circle (1pt);
	\path[draw,dotted,thick,name path=p5] (G) --++(-90:.84) node(G')[inner sep=0]{}; 
	\fill[name intersections={of=p5 and p6}] (intersection-1) circle (1pt);
	
	\path[draw,dotted,thick,name path=p7] (A) --++(-90:1.08) node(A')[inner sep=0]{}; 
	\path[draw,dotted,thick,name path=p8] (H) --++(-90:1.08) node(H')[inner sep=0]{}; 
	
	\fill[] (B) circle (1pt) (C) circle (1pt)
	(F) circle (1pt) (G) circle (1pt);        
	\fill[] (A) circle (1pt) (D) circle (1pt)
	(E) circle (1pt) (H) circle (1pt);      
	
	\path[draw] (B) -- ++(225:.18);
	\path[draw] (C) -- ++(-45:.18);
	\path[draw] (F) -- ++(225:.18);
	\path[draw] (G) -- ++(-45:.18);	
	
	\draw (-.3,.65) node {$\Gamma^+$};
	\draw (-.3,-.85) node {$\Gamma^{\phantom{+}}$};
	
	\end{tikzpicture}	
\end{minipage}
\caption{Realizable configurations of maximal dimension on~$K_4$:
$4$-cycle on top level}
	\label{cap:K4realize3}
\end{figure}
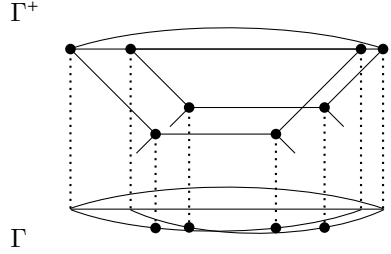
\begin{Ex} \label{example_K4}{\rm To give a more involved example we discuss the realizability
locus over the complete graph~$K_4$. We first claim that there are
five types of maximal dimensional cones, as given
in Figure~\ref{cap:K4realize1}, Figure~\ref{cap:K4realize2} and
Figure~\ref{cap:K4realize3}.
\par
To prove this claim, we establish some notation. Suppose that $\Gamma^+$
is the enhanced level graph corresponding to a canonical divisor 
in the realizability locus.
Let $v_1,\ldots,v_4$ be the four vertices of the original $K_4$ and
let $w_1,\ldots,w_n$ with $n \leq 4$ be the remaining vertices of~$\Gamma$,
each of them having at least one leg. Consider the vertices on the top
level of~$\Gamma^+$. A vertex $w_i$ cannot be on top level, since its
two non-leg half edges have $k=-1$, since it has at least one leg
and since the sum of decorations is equal to~$-2$. Suppose one of the
$v_i$ on top level is decorated with a leg. This requires $v_i$ to have
(at least) three horizontal edges, otherwise the sum of decorations
cannot be~$-2$. Together with the previous argument this implies that all~$v_i$
lie on top level, and that each of them  is decorated with a single leg.
By Lemma~\ref{le:dimcompute} below the cone with this configuration has
dimension~$6$, strictly less than the maximal dimension~$8$.
\par
Consequently the top level consists of vertices $v_i$ without legs,
hence each of them is adjacent to precisely two horizontal edges.
Thus the subgraph on top level is a simple cyle.
In the $K_4$ the length of the cycle might be three or four. In the case
of a $4$-cycle on top, the case of a single $w_i$
with just one leg on one of the edges is ruled out by the sum  of
decorations being equal to~$-2$. The configuration in
Figure~\ref{cap:K4realize3} remains and attains the maximal dimension.
\par
Suppose the top level is a $3$-cycle consisting of $v_1,v_2,v_3$ and
that $v_4$ is on lower level. Consider the edges $e_i$ for $i=4,5,6$
joining $v_{i-3}$ to $v_4$. For each partition $(4,0,0)$, $(3,1,0)$, $(2,2,0)$
and $(2,1,1)$ of the four vertices~$w_i$ on these three edges there is a
unique solution to the enhancement conditions, leading to the graphs in
Figure~\ref{cap:K4realize1} and Figure~\ref{cap:K4realize2}, all of them
of maximal dimension. Graphs with less than four~$w_i$ are degenerations
hereof, and hence of strictly smaller dimension.
\par
Note that for a given tropical curve~$\Gamma$ with underlying graph~$K_4$ the preimage~$\pi^{-1}\big([\Gamma]\big)$ does not meet all of these maximal cones. For example the graph in Figure~\ref{cap:K4realize3}
is possible for any edge lengths, with the canonical divisor supported on
an arbitrary pair of disjoint edges. However, the graph pictured
in Figure~\ref{cap:K4realize1} on the left, with the canonical
divisor supported on a pair of adjacent edges is possible if and
only if $|e_6| < |e_4|$, $|e_6| < |e_5|$.
\par
The realizability locus is connected in codimension one over the closure
of the $K_4$-cone. To see this, note that contracting one the of
horizontal edges on the top level $4$-cycle in Figure~\ref{cap:K4realize3}
and reopening it as a vertical edge, connects this cone to the
cone in Figure~\ref{cap:K4realize1}~(left). This cone is connected to
the cone in Figure~\ref{cap:K4realize2}~(left) by pushing one of the $w_i$
adjacent to~$v_4$ into~$v_4$. This cone is connected to the cone in
Figure~\ref{cap:K4realize2}~(right) by pushing the isolated $w_i$ on~$e_6$
through~$v_4$ onto the edge~$e_5$. Finally, the cone in
Figure~\ref{cap:K4realize2}~(left) is also connected to the cone
Figure~\ref{cap:K4realize1}~(right) by pushing the vertex $w_i$ on~$e_5$
adjacent to~$v_4$ through~$v_4$.
}\end{Ex}
\par
It is an interesting combinatorial question  whether in general the
maximal cones of the realizability locus are connected in codimension one.

\subsection{Dimensions} \label{sec:dimresults}

The fundamental Theorem of Bieri-Groves \cite[Theorem A]{BieriGroves} (also see \cite[Theorem~2.2.3]{EinsiedlerKapranovLind}) shows that, given a closed subvariety of a split algebraic torus, its tropicalization admits the structure of a polyhedral complex of the same dimension. In our situation, the tropical Hodge bundle does not admit a natural embedding into a toric variety, but rather  a toroidal embedding in the sense of \cite{KKMSD_toroidal}, the compactification $\calDivbar_{g,2g-2}$ of $\calDiv_{g,2g-2}$ over $\calMbar_g$. In this situation a weaker version of the Bieri-Groves Theorem holds (see \cite[Theorem~1.1]{Ulirsch_tropcomp}) and we only know that the realizability locus (i.e. the tropicalization of $\PP\omoduli[g]$) is a generalized cone complex of dimension $\leq 4g-4$. This result technically only applies in the case when the boundary has no self-intersection, but the arguments immediately generalize to our situation. Our methods allow us to prove the following much stronger statement.
\par
\begin{Thm} \label{thm:equi}
The realizability locus $\PP \cRR_\Omega$
is a generalized cone complex, all whose maximal cones have dimension~$4g-4$. 
\par
The fiber in $\PP \cRR_\Omega$ over a maximal-dimensional cone $\sigma_{G}$ 
in $M_{g}^\trop$ (i.e. for a trivalent graph $G$ with all vertex-weights 
$h(v)=0$) is a generalized 
cone complex, all whose maximal cones have relative dimension~$g-1$. 
\end{Thm}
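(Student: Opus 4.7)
The plan is to extract an explicit combinatorial dimension formula for each cone of $\PP\omoduli[g]^{trop}$ and combine it with the realizability criterion of Theorem~\ref{thm:realizecrit} and the toroidal Bieri--Groves upper bound to identify the maximal cones of $\PP\cRR_\Omega$. A cone $\sigma_{\Gamma^+}$ of $\PP\omoduli[g]^{trop}$ is indexed by an enhanced level graph $\Gamma^+$ with underlying semistable model $G'$; its interior parametrizes tropical curves with model $G'$ and edge lengths $(\ell_e)_{e \in E(G')}$ for which the piecewise linear function $f$ with prescribed slopes $s(e) = -k(e) - 1$ is single valued. The only constraints are the cycle equations $\sum_{e \in \gamma} \pm s(e)\,\ell_e = 0$, which are trivial precisely when $\gamma$ lies entirely in the horizontal subgraph $G'_h$. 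Using the Euler identity $|E(G')| - b_1(G') = |V(G')| - 1$ for connected $G'$, this yields
\begin{equation*}
\dim \sigma_{\Gamma^+} \= |E(G')| \m \bigl(b_1(G') \m b_1(G'_h)\bigr) \= |V(G')| \m 1 \+ b_1(G'_h)\,.
\end{equation*}

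By Theorem~\ref{thm:realizecrit}, the entire cone $\sigma_{\Gamma^+}$ is contained in $\PP\cRR_\Omega$ as soon as $\Gamma^+$ satisfies conditions i) and ii), since these conditions depend only on combinatorial data and not on the choice of edge lengths. On the other hand, $\PP\omoduli[g] \hookrightarrow \calDivbar_{g,2g-2}$ is a closed embedding into a toroidal compactification of algebraic dimension $4g-4$ by Theorem~\ref{thm_Divbar}, so the toroidal Bieri--Groves theorem \cite[Theorem~1.1]{Ulirsch_tropcomp} gives the upper bound $\dim \sigma_{\Gamma^+} \leq 4g-4$ whenever $\sigma_{\Gamma^+}$ is realizable.

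The main obstacle will be exhibiting enough maximal realizable cones to conclude purity. Here I would fix a trivalent graph $G$ of genus $g$ with $h(v) = 0$ for every $v$, so that $|V(G)| = 2g-2$, $|E(G)| = 3g-3$, and $b_1(G) = g$, and place the $2g-2$ simple zeros of the principal stratum in pairwise distinct edges of $G$ as new valence-two vertices, obtaining $|V(G')| = 4g-4$. Adapting the inductive construction at the end of the proof of Theorem~\ref{thm:realizecrit}, I would produce an enhancement whose horizontal subgraph has exactly one independent cycle (a single horizontal cycle at the top level, with every remaining node strictly below) and which satisfies conditions i) and ii); by the dimension formula this yields a maximal cone of dimension $4g-4$. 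Alternative extremal configurations appear from non-principal strata where fewer zeros of higher multiplicity are balanced by larger $b_1(G'_h)$, as illustrated by the right-hand picture of Figure~\ref{cap:DumbOK}. Purity then follows once one shows that every realizable cone of smaller dimension is a face of such a maximal cone, obtained either by opening a horizontal edge into a pair of vertical edges, or by transversally splitting a multiple zero of $f$.

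For the fiber assertion, the forgetful morphism $\calDivbar_{g,2g-2} \to \calMbar_g$ is toroidal by Theorem~\ref{thm_Divbar}, and the induced map $\PP\omoduli[g]^{trop} \to M_g^{trop}$ of generalized cone complexes sends $\sigma_{\Gamma^+}$ onto the cone $\sigma_G$ of the stable contraction $G$ of $G'$. If $G$ is trivalent with all $h(v) = 0$, then $\sigma_G$ is a maximal cone of $M_g^{trop}$ and no face of a maximal realizable cone $\sigma_{\Gamma^+}$ projecting onto $\sigma_G$ can have stable contraction strictly more degenerate than $G$; hence maximality of $\sigma_{\Gamma^+}$ in the fiber coincides with maximality in $\PP\cRR_\Omega$ itself. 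The relative dimension of such a maximal cone over $\sigma_G$ is therefore $(4g-4) - (3g-3) = g-1$, and purity of the fiber follows from purity of $\PP\cRR_\Omega$ by restricting to the preimage of $\sigma_G$.
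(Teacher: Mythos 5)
Your dimension bookkeeping rests on a claim that the paper explicitly refutes: you assert that conditions i) and ii) of Theorem~\ref{thm:realizecrit} ``depend only on combinatorial data and not on the choice of edge lengths,'' so that a whole cone of $\PP\omoduli[g]^{trop}$ cut out only by the cycle equations lies in $\PP\cRR_\Omega$ once the conditions hold. But the conditions refer to levels, i.e.\ to the actual values of $f$, and within a cone indexed by $(G',D,\text{slopes})$ the relative order of the $f$-values of different horizontal components varies with the edge lengths; Figure~\ref{cap:IllEdge} and the remark following Theorem~\ref{thm:realizecrit} show realizability genuinely changing inside such a cone, which is why the realizability locus is \emph{not} a subcomplex of $\calDiv_{g,2g-2}^{trop}$. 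Correspondingly, the cones of $\PP\cRR_\Omega$ are indexed by enhanced \emph{level} graphs, and their dimension is $|{\rm Lev}(\Gamma^+_0)|-1+|E_H(\Gamma^+_0)|$ (Lemma~\ref{le:dimcompute}); your formula $|V(G')|-1+b_1(G'_h)$ equals this only when every level is a single horizontally connected piece, and in general overcounts because it ignores the equalities $f(v)=f(w)$ forced by the level structure. (Your upper bound via the toroidal Bieri--Groves theorem of \cite{Ulirsch_tropcomp} is a legitimate alternative to the paper's intrinsic bound, which the paper itself mentions before the theorem; note though that $\calDivbar_{g,2g-2}$ has dimension $5g-5$ --- it is $\PP\omoduli[g]$, respectively its closure, that has dimension $4g-4$.)

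The more serious gap is purity, which is the actual content of the theorem. Exhibiting one $(4g-4)$-dimensional realizable cone over each trivalent graph does not show that \emph{every} maximal cone of $\PP\cRR_\Omega$ has dimension $4g-4$; for that you must show that any realizable cone of smaller dimension is a proper face of a larger realizable cone. You state this reduction but offer no argument beyond naming two moves (``opening a horizontal edge'' and ``splitting a multiple zero of $f$''), without showing these suffice or that they preserve conditions i) and ii). The paper's proof does the real work here: it bounds the cone dimension by the number of levels plus horizontal edges via Lemma~\ref{le:dimcompute}, compares with the per-level dimension of the space of twisted differentials using \cite[Theorem~6.1]{kdiff} and Theorem~\ref{thm:dim}, and then, in Proposition~\ref{prop:maxvert}, shows that whenever some level contributes more than one to that dimension the differentials at that level can be degenerated further --- the key geometric input being that projectivized strata of meromorphic differentials contain no complete curves (Theorem~\ref{thm:nocomplete}) --- while splitting positive-genus vertices and handling trees of marked points so that the realizability conditions persist. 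None of this machinery appears in your sketch, and the fiber statement ($g-1$ relative dimension over a trivalent cone $\sigma_G$) inherits the gap, since it presupposes the purity statement and, additionally, that every maximal realizable cone over $\sigma_G$ projects with full-dimensional image onto $\sigma_G$.
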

\par
Recall that in Figure \ref{cap:IllEdge} we have seen that the realizability locus is not a subcomplex of~$\calDiv_{g,2g-2}^{trop}$.
\par
\begin{Ex} \label{ex:dim}
{\rm We revisit Example~\ref{ex:dumbbell}. The dumbbell graph is one of
the two trivalent genus two graphs. For any edge lengths
assigned to the dumbbell, the fiber of $\PP\omoduli[g]^\trop$
over the corresponding tropical curve is the folded square with
two ends pictured in Figure~\ref{cap:dimdumb}. 
\begin{figure}[htb]
	\centering
	\begin{minipage}[t]{0.5\textwidth}
		\centering
		\begin{tikzpicture} [scale=.6]
		\tikzstyle{every node}=[font=\normalsize]
		\tikzset{arrow/.style={latex-latex}}
		\def\S{2.825cm} 
		
		
		\path coordinate (A) at (0,2) (A) -- +(45:\S) 
		coordinate (B) (B) -- +(-45:\S) 
		coordinate (C) (C) -- +(-135:\S) 
		coordinate (D);
		\coordinate (E) at (-2,2); 
		\coordinate (F) at (6,2);
		
		\draw (A) -- (B) -- (C) -- (D) -- (A);	
		\foreach \x in {A,B,C,D}
		\fill (\x) circle (3pt);
		\draw[line width=1.3pt] (E) -- (F); 
		\draw (B) edge [arrow,thick,shorten >=12pt,shorten <=12pt,bend left =25] (D);
		
		\end{tikzpicture}
	\end{minipage}
	\caption{The simplices over the dumbbell graph}
	\label{cap:dimdumb}
\end{figure}
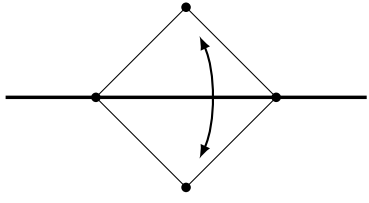 
The realizability locus corresponds to the thickened line segments,
drawn horizontally. Notice here that the canonical divisor $K_\Gamma$ (which
corresponds to the third corner in the triangle) is not in the realizability
locus.
}\end{Ex}
\par
The dimension estimates are based on the following lemma.
The contraction procedure in the lemma stems from the fact that
the length information encoded in those genus zero nodes is not
recorded when passing to the associated tropical curves with divisor. Note
that for all enhanced level graphs that appear in Theorem~\ref{thm:realizecrit},
i.e.\ those resulting from Lemma~\ref{le:fgivesLev}, we
have $\Gamma^+ = \Gamma^+_0$ in the following statement.
\par
\begin{Lemma} \label{le:dimcompute}
For every level graph $\Gamma^+$ let $\Gamma^+_0$ be the graph obtained by
successively contracting edges in $\Gamma^+$ that have a $(n+1)$-valent
genus zero node with $n \geq 1 $~marked points at one of its ends. 
The dimension  of a cone $\sigma(\Gamma^+)$ in the realizability locus with
associated level graph $\Gamma^+$ is equal to
one less than the number of levels ${\rm Lev}(\Gamma^+_0)$
plus the number of horizontal edges $E_H(\Gamma^+_0)$, i.e.\ 
\bes \label{eq:dimformula}
\dim(\sigma(\Gamma^+)) \= |{\rm Lev}(\Gamma^+_0)| - 1 + |E_H(\Gamma^+_0)|\,.
\ees
\end{Lemma}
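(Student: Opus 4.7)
The plan is to give a bijective parametrization of $\sigma(\Gamma^+)$ by level values together with horizontal edge lengths, which will make the dimension count immediate.

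First I would observe that the cycle conditions cutting $\sigma(\Gamma^+)$ out of $\RR_{>0}^{E(\Gamma^+)}$ --- i.e., that the edge lengths and slopes assemble into a globally well-defined continuous function $f$ on the realization of $\Gamma^+$ --- can be completely repackaged by specifying $f$ vertex-by-vertex. Concretely, any function $f\colon V(\Gamma^+_0) \to \RR$ that is constant on each level class and respects the full order, together with a choice of positive length $|e|$ for every $e \in E_H(\Gamma^+_0)$, recovers the length of every vertical edge via $|e| = (f(v^+) - f(v^-))/s(e)$, where $s(e) > 0$ is the upward slope. Positivity of these vertical lengths follows from $f(v^+) > f(v^-)$, and the cycle conditions are automatically satisfied since the accumulated $f$-difference around any loop vanishes identically. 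Conversely, any point of the cone determines such an $f$ uniquely up to an overall additive shift, so this parametrization is bijective; counting parameters then yields $|{\rm Lev}(\Gamma^+_0)| - 1 + |E_H(\Gamma^+_0)|$ real dimensions, with all inequalities cutting out open subsets.

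The main remaining point, which I expect to be the most subtle, is the passage from $\Gamma^+$ to $\Gamma^+_0$. An edge contracted in this procedure is adjacent to a genus-zero vertex $w$ with $n \geq 1$ legs and exactly one non-leg half-edge; since the legs at $w$ all carry decoration $k=1$, the sum rule \eqref{eq:genussum} forces the non-leg half-edge at $w$ to have decoration $-2-n$, so the edge has upward slope $n+1 \geq 2$ and is in particular vertical. Hence $|E_H|$ is unaffected by the contraction. For the moduli count inside $\PP\omoduli[g]^{\trop}$, the length of such a dangling edge is moreover not an independent parameter, since the dangling vertex $w$ can be absorbed into its neighbour $v$ --- its $n$ markings becoming additional divisor multiplicity at $v$ --- without altering the associated stable tropical curve with canonical divisor. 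This identifies $\sigma(\Gamma^+)$ with $\sigma(\Gamma^+_0)$ inside the realizability locus, and the dimension formula follows. Verifying that this absorption is compatible with the cone-complex structure inherited from $\Div_{g,2g-2}^{\trop}$ is the delicate part and ultimately reduces to the description of cones given in Section \ref{section_modulitropdivisors}.
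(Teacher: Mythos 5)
Your proof is correct and follows essentially the same route as the paper's: both parametrize $\sigma(\Gamma^+)$ by the values of $f$ on the levels of $\Gamma^+_0$ (the paper's depths $d_i$, normalized by $d_0=0$ instead of quotienting by a global additive shift) together with the horizontal edge lengths, recovering each vertical edge length as the level difference divided by the slope $k(e^+)+1$, and checking uniqueness in the converse direction. Your explicit verification that the contracted dangling edges are vertical and that their lengths are not recorded in the associated tropical curve with divisor is left implicit in the paper's proof, but it is the same observation the paper relies on.
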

\par
\begin{proof}
Assign a real number $d_i \leq 0$ ('depth') to each
level $i \in {\rm Lev}(G,\ell)$, such that $d_0 = 0$ and $d_i < d_j$
if $i <j$. Then endow any edge~$e$ joining the vertices $v_1$ and $v_2$ with
$\ell(v_1) > \ell(v_2)$ with length $(d_{\ell(v_1)} - d_{\ell(v_2)})/(k(e^+)+1)$
and endow horizontal edges with arbitrary lengths.  By the construction
this tropical curve admits a unique continous function~$f$ (up to addition
of a global constant) that is linear of slope zero on horizontal edges
and linear of slope $-k(e^+)-1$ on each edge (as viewed from the top end).
This implies that $\dim(\Gamma^+) \geq |{\rm Lev}(\Gamma^+_0)| - 1
+ |E_H(\Gamma^+_0)|$.
\par
On the other hand, every rational function~$f$ on a tropical curve with enhanced
level graph $\Gamma^+_0$ determines uniquely a collection of real numbers $d_i$
with $d_0=0$ and $d_{\ell(v_1)} - d_{\ell(v_2)} = |e|s(e)$ whenever
$\ell(v_1) > \ell(v_2)$. This implies the converse estimate.
\end{proof}
\par
\begin{proof}[Proof of Theorem~\ref{thm:equi}]
To prove the upper bound~$4g-4$, we compare with the complex dimension of the
moduli space of twisted differentials (denoted by $\frakM^{\rm ab}(\oG)$
in \cite{kdiff}) compatible with a level graph~$\oG$. (The dimension
does not depend on the enhancement.) 
Each level contributes at least one to the dimension of $\frakM^{\rm ab}(\oG)$,
namely by rescaling the differentials on that level by a scalar and
definition the dimension of $\frakM^{\rm ab}(\oG)$ is the sum 
of the dimensions of the spaces of twisted differentials at each level.
Consequently, the maximal dimension is bounded above by the number of horizontal
edges plus $\dim_\CC \frakM^{\rm ab}(\oG)-1$. This sum is computed
in \cite[Theorem~6.1]{kdiff} to be equal to $\dim_\CC \omoduli[g](\mu)-1$, 
where~$\mu$ is the type of the twisted differential. This quantity
is maximized for the principal stratum $\mu = (1, \ldots, 1)$ and gives
$\dim_\CC \omoduli[g](\mu)-1  = 2g-2+|\mu| = 4g-4$ by Theorem~\ref{thm:dim}
and thus the claimed upper bound.
\par
To show that this upper bound is always attained we have to split
vertices whose contribution to $\frakM^{\rm ab}(\oG)$
is greater than one. The claim follows from the more precise
statement in the subsequent proposition.
\end{proof}
\par
\begin{Prop} \label{prop:maxvert}
Maximal dimensional cones of the realizability locus
correspond precisely to
the enhanced level graphs~$\eG$ with the following properties.
\begin{itemize}
\item[(i)] All the vertices have vertex genus zero.
\item[(ii)] Each vertex is either
\begin{itemize}
\item[(ii.1)] $n$-valent ($n \geq 3$) with precisely two edges which are
legs or edges to lower level.
\item[(ii.2)] $n$-valent ($n \geq 3$) with precisely one edge which is
a leg or an edge to lower level.
\end{itemize}
\item[(iii)] Each level~$L$ contains either
\begin{itemize}
\item[(iii.1)] precisely one vertex as in (ii.1). All the edges of this
vertex to higher level disconnect the subgraph $\Gamma^+_{\geq L}$, or
\item[(iii.2)] only vertices as in (ii.2). At each of these nodes~$v$
moreover $|v|-2$ edges disconnect  the subgraph $\Gamma^+_{\geq L}$
while the remaining edges of the nodes at level~$\ell$
together with the connected components of the subgraph $\Gamma^+_{> L}$
form a simple cycle
\end{itemize}
\end{itemize}
\end{Prop}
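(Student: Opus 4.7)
The plan is to combine Lemma~\ref{le:dimcompute} with a refinement of the dimension bound used in the proof of Theorem~\ref{thm:equi}, and then to extract the combinatorial conditions (i)--(iii) by identifying when every inequality in the chain is an equality.

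First, I would revisit the chain
\[
\dim \sigma(\Gamma^+) = |\mathrm{Lev}(\Gamma^+_0)| - 1 + |E_H(\Gamma^+_0)| \leq |E_H(\Gamma^+_0)| + \dim_\CC \frakM^{\rm ab}(\oG) - 1 \leq 4g-4
\]
from the proof of Theorem~\ref{thm:equi}. The first inequality encodes that each level of $\Gamma^+_0$ contributes at least one scaling dimension to $\dim_\CC \frakM^{\rm ab}(\oG)$, so equality forces every level to contribute \emph{exactly} one. The second inequality is an equality if and only if $\mu = (1,\ldots,1)$; in the boundary of any non-principal stratum $\dim_\CC \omoduli[g](\mu)$ is strictly smaller. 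Since any vertex with $h(v) \geq 1$ contributes strictly more to $\dim_\CC \omoduli[h(v)](\mu(v))$ than the genus-zero case with the same legs/half-edges, both equalities together already force condition~(i) and pin down $\mu$ to be the principal type.

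Second, I would translate the ``each level contributes only its scaling'' condition into a local condition at each vertex using Proposition~\ref{prop:ressurj}. At a genus-zero vertex $v$ with $z_v$ zero-ends (legs or downward half-edges, all with decoration $\geq 0$) the fiber of the residue map has dimension $z_v - 1$. A vertex with $z_v \geq 3$ would contribute at least two free dimensions beyond the single level scaling, so $z_v \in \{1,2\}$, which is precisely the dichotomy (ii.1)/(ii.2). The global conditions (iii.1) and (iii.2) would then emerge from a rank analysis of the GRC and MRC constraints on each level $L$: if the level contains a (ii.1) vertex, its one-dimensional fiber absorbs the scaling, and the (ii.2) residues on that level must be uniquely determined by the GRC applied to each connected component of $\Gamma^+_{\geq L}$, forcing the disconnecting condition in (iii.1); if the level consists only of (ii.2) vertices, the residues form a single circulation on a cycle built from the remaining horizontal and upward edges together with the components of $\Gamma^+_{>L}$, forcing the simple-cycle structure in (iii.2).

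The converse direction then follows by verifying that any $\Gamma^+$ satisfying (i)--(iii) meets the cycle conditions~i) and~ii) of Theorem~\ref{thm:realizecrit}: the disconnecting/simple-cycle structure in (iii) directly provides cycles through the inconvenient vertices of type (ii.1) and through each horizontal edge that stay above the relevant level, so the graph is realizable and has $\dim \sigma(\Gamma^+) = 4g-4$ by Lemma~\ref{le:dimcompute}. The main obstacle will be the rank analysis of the GRC/MRC constraints, which must simultaneously track the residue freedoms at each vertex, the matching conditions across horizontal edges, and the GRC applied to each connected component of $\Gamma^+_{\geq L}$. The combinatorial structure in (iii) is precisely what makes all these contributions balance exactly, and verifying this balance will require carefully tracing how residues propagate through the level structure (essentially a homological computation on the subgraph $\Gamma^+_{\geq L}$).
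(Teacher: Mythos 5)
Your reduction of maximality to ``every level contributes exactly one dimension to $\frakM^{\rm ab}(\oG)$'' matches the skeleton of the paper's argument, and the residue--fiber count at a genus-zero vertex does rule out three or more zero-ends. But the step you yourself flag as the main obstacle --- deriving (iii.1)/(iii.2) from a ``rank analysis of the GRC and MRC'' --- is precisely the hard part, and it is not the routine linear algebra your sketch suggests: the space of twisted differentials at a level is not ``product of strata minus rank of linear residue conditions,'' because the residue map is not surjective in general (Proposition~\ref{prop:ressurj}); for instance, residues forced to zero at an inconvenient vertex give an \emph{empty} locus rather than a codimension cut, so the dimension count is entangled with realizability. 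The paper avoids this entirely by a different mechanism: if some level contributes more than one (projectivized) dimension, Theorem~\ref{thm:nocomplete} (no complete curves in projectivized strata of meromorphic type) guarantees that the differentials at that level can be degenerated, increasing the number of levels or horizontal edges and hence, by Lemma~\ref{le:dimcompute}, the cone dimension; this is combined with explicit splitting moves for positive-genus vertices and for trees of marked points which are checked to preserve conditions i) and ii) of Theorem~\ref{thm:realizecrit}. Without Theorem~\ref{thm:nocomplete} or a genuinely completed rank/realizability analysis, your ``only if'' direction does not close. Your derivation of (i) is also a non sequitur: by \cite[Theorem~6.1]{kdiff} the quantity $\dim_\CC\frakM^{\rm ab}(\oG)+|E_H|$ equals $\dim_\CC\omoduli[g](\mu)$ for \emph{every} level graph, independently of the vertex genera, so equality in your second inequality only pins down $\mu$ and says nothing about $h(v)$; what excludes $h(v)\geq 1$ is the same per-level fiber bound as in (ii) (a positive-genus vertex has residue-map fibers of dimension at least $2h(v)-1+z_v\geq 2$), not the comparison of strata dimensions you invoke.

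The converse direction also has gaps. Lemma~\ref{le:dimcompute} gives $\dim\sigma(\Gamma^+)=|{\rm Lev}(\Gamma^+_0)|-1+|E_H(\Gamma^+_0)|$, but by itself this does not yield $4g-4$: you still need the direct computation (the first half of the paper's proof, using Theorem~\ref{thm:dim}, the GRC and the residue theorem) that each level of a conforming graph contributes exactly one --- the single relative period in case (iii.1) once all residues are killed, the single common residue along the simple cycle in case (iii.2) --- and then \cite[Theorem~6.1]{kdiff} to count $|{\rm Lev}|+|E_H|$. Moreover, your claim that (iii) ``directly provides cycles through the inconvenient vertices of type (ii.1)'' is backwards: (iii.1) demands that the upward edges \emph{disconnect} $\Gamma^+_{\geq L}$, which precludes such cycles; the correct compatibility statement is that realizability forces those vertices to be non-inconvenient (their all-zero residues must lie in the image of the residue map). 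Since the proposition characterizes cones already lying in the realizability locus, re-proving realizability is not what the converse requires; the missing ingredient is the dimension count itself.
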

\par
In (iii.2) the valence $|v|$ refers to the valence of the subgraph
$\Gamma^+_{\geq L}$, i.e.\ an edge to lower level does not contribute. 
A \emph{simple cycle} is a graph isomorphic to some $n$-gon.
In the proof we will see that the
conditions in the theorem can 
be explained using period coordinates (see Section~\ref{sec:percoord}).
\par
\begin{proof}
In order to show that these cones are maximal we need to show
that the contribution of each level to the dimension of $\frakM^{\rm ab}(\oG)$
is at most one. Then we conclude using \cite[Theorem~6.1]{kdiff}, since
then the number of levels has to be at least $4g-4-|E_H(\ol{\Gamma})|$.
\par
We start by discussing the dimension contribution for the cones 
in the statement of the proposition. Recall from Theorem~\ref{thm:dim}
that the space of differentials corresponding to a $n$-valent vertex
of genus zero has dimension $n-2$.
\par
If there are two marked points (or edges to lower level)
as in $(ii.1)$ this space is parametrized by the relative
period between the two marked points and $n-1$
residues. Moreover the condition in $(iii.1)$ and the GRC implies that
all the residues are zero and the resulting contribution of that
level~$L$ is of dimension one.
\par
If there is only one marked point (or edges to lower level), the
space of differentials is parametrized by the $n-1$ residues with one
constraint given by the
residue theorem. The condition in~(iii.2) and the GRC imply again that
$n-3$ residues vanish. Hence each node contributes again individually
one to the complex dimension of the space of differentials at that level.
Moreover, the cycle constraint in~(iii.2) implies that this residue is the
same for each vertex at the given level. Consequently the total contribution
of that level to the space of twisted differentials is one, as claimed.
\par
To show that the cones listed in the proposition are the only
cones of maximal dimension, we show that we can 
split the vertices in an enhanced
level graph until the condition of the proposition are met, while
maintaining the conditions Theorem~\ref{thm:realizecrit}~(i) and~(ii)
of the realizability locus along the splitting procedure. E.g.\ 
while some vertex genus is positive, we apply the splitting
of Figure~\ref{cap:deg1} where $a_i \geq 0$  and where $b_i \leq -1$.
\begin{figure}[htb]
\begin{minipage}[t]{0.4\textwidth}
\centering
\begin{tikzpicture} [scale=2]
\tikzstyle{every node}=[font=\normalsize]

\coordinate[label=left:$g$] (A) at (.5,0);
\coordinate (dots_lo) at (.7,.25); 
\coordinate (dots_lu) at (.7,-.3); 
\fill (A) circle (1pt);
\path[draw] (A) --++(30:.6) node[above](bp){$b_p$}
(A) --++(-30:.6) node[below](ar){$a_r$}
(A) --++(90:.3) node[above](b2){$b_2$}
(A) --++(-90:.3) node[below](a2){$a_2$}
(A) --++(120:.35) node[above](b1){$b_1$}
(A) --++(-120:.35) node[below](a1){$a_1$};
\draw (dots_lo) node []{$\cdots$}
(dots_lu) node []{$\cdots$};

\draw[] (1.3,0) node [scale=2]{$\rightrsquigarrow$};

\begin{scope}[shift={(1.5,0)}]
\coordinate[label=left:$g-1$] (A) at (.5,.15);
\coordinate[label=left:{$g=0$}] (B) at (.5,-.15);
\coordinate (dots_ro) at (.7,.4); 
\coordinate (dots_ru) at (.7,-.45); 
\draw (.5,0) ellipse (.06cm and .15cm);
\fill (A) circle (1pt);
\fill (B) circle (1pt);
\path[draw] (A) --++(30:.6) node[above](bp){$b_p$}
(B) --++(-30:.6) node[below](ar){$a_r$}
(A) --++(90:.3) node[above](b2){$b_2$}
(B) --++(-90:.3) node[below](a2){$a_2$}
(A) --++(120:.35) node[above](b1){$b_1$}
(B) --++(-120:.35) node[below](a1){$a_1$};
\draw (dots_ro) node []{$\cdots$}
(dots_ru) node []{$\cdots$};
\end{scope}
\end{tikzpicture}
\end{minipage}
%
%
%
\caption{Splitting positive genus}
	\label{cap:deg1}
\end{figure}
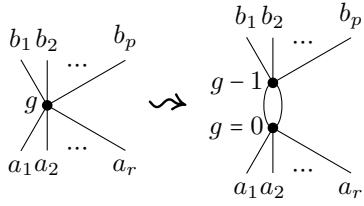
We may thus assume from now on that all vertex genera are zero.
In order to show that the nodes at a given level~$i$ can be split 
until their contribution to the dimension of $\frakM^{\rm ab}(\oG)$
is one (in the sense computed in the beginning of the proof)
we could argue combinatorially but arguing geometrically as
follows seems more enlightening. Consider the space of twisted
differentials~$\eta= \{\eta_v\}$ compatible with the level graph
currently under consideration. Suppose that the subspace of twisted
differentials with all $\eta_v$ fixed except for those with $\ell(v)=i$
has dimension greater than one. To put it differently, we assume that the
projectivisation of this subspace has positive dimension. This subspace
is cut out inside $\PP\omoduli(\mu)$ by a collection of residue conditions.
Since $\PP\omoduli(\mu)$ does not contain a projective curve
by Theorem~\ref{thm:nocomplete}, a subspace defined by residue conditions
does not contain such a curve either. Consequently, there 
is some way to degenerate the meromorphic differential, thus increasing
either the number of levels or the number of horizontal edges. This
also increases the dimension of the corresponding cone in the
realizability locus and we can repeat the process until each
cone has dimension one, with the caveat given in Lemma~\ref{le:dimcompute}
that trees of marked points are contracted.
If we replace each such tree as in Figure~\ref{cap:deg2}
\begin{figure}[h]
\hspace{.1mm} 
\begin{minipage}[t]{0.42\textwidth}
	\centering
	\begin{tikzpicture} [scale=1.5]
	\tikzstyle{every node}=[font=\normalsize]
	\coordinate[] (A) at (.5,0);
	\coordinate (dots_lo) at (.65,.25);
	\fill (A) circle (1pt);
	\path[draw] (A) --++(45:.42) node[above](br){$b_r$}
	(A) --++(90:.3) node[above](b2){$b_2$}
	(A) --++(120:.35) node[above](b1){$b_1$}
	(A) --++(-135:.35) node[below](a){$a$}
	(A) --++(-45:.42) node[inner sep=0](B){}
	(B) --++(-135:.35) node[below](0){$0$}
	(B) --++(-45:.35) node[inner sep=0](C){}
	(C) --++(-135:.35) node[below](D){$0$}
	(C) --++(-45:.35) node[below](E){$0$};
	\draw (dots_lo) node []{$\cdots$};
	\fill (B) circle (1pt);
	\fill (C) circle (1pt);
	\draw[] (1.45,0) node [scale=2]{$\rightrsquigarrow$};
	\coordinate[] (F) at (2,0);
	\coordinate (dots_lo) at (2.12,.25); 
	\fill (F) circle (1pt);
	\path[draw] (F) --++(45:.42) node[above](br){$b_r$}
	(F) --++(90:.3) node[above](b2){$b_2$}
	(F) --++(-90:.25) node[below](a2){$0$}
	(F) --++(120:.35) node[above](b1){$b_1$}
	(F) --++(-120:.29) node[below,yshift=-2pt](a){$a$}
	(F) --++(-60:.29) node[below](a){$0$}
	(F) --++(-30:.5) node[below](a){$0$};
	\draw (dots_lo) node []{$\cdots$};
	\draw[] (2.7,0) node [scale=2]{$\leftlsquigarrow$};
	\coordinate[] (G) at (3.2,0);
	\coordinate (dots_lo) at (3.31,.25);
	\fill (G) circle (1pt);
	\path[draw] (G) --++(45:.42) node[above](br){$b_r$}
	(G) --++(90:.3) node[above](b2){$b_2$}
	(G) --++(120:.35) node[above](b1){$b_1$}
	(G) --++(-135:.35) node[below](0){$0$}
	(G) --++(-45:.42) node[inner sep=0](H){}
	(H) --++(-135:.35) node[below](0){$0$}
	(H) --++(-45:.35) node[inner sep=0](I){}
	(I) --++(-135:.35) node[below](J){$0$}
	(I) --++(-45:.35) node[below,yshift=-2pt](K){$a$};
	\draw (dots_lo) node []{$\cdots$};
	\fill (H) circle (1pt);
	\fill (I) circle (1pt);
	\end{tikzpicture}
\end{minipage}

\caption{Rearranging trees of marked points}
	\label{cap:deg2}
\end{figure}
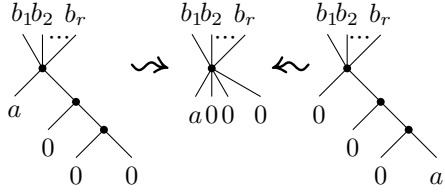
then this is also
a degeneration of the graph where all trees of marked points
are contracted, the number of levels and horizontal edges is
the same and this graph satisfies the conditions of
Theorem~\ref{thm:realizecrit}
if the graph prior to the replacement did. This concludes the
proof of the existence of a splitting procedure.
\end{proof}

\subsection{The realizability locus for strata of abelian differentials}
\label{sec:Rstrata}

Let $\mu$ be a partition of $d$. We say that an effective divisor $D$ of degree $d$ on a tropical curve $\Gamma$ has type $\mu$ if the multiplicities at its support define the partition $\mu$. Notice, in particular, that, in complete analogy with the situation for the projective algebraic Hodge bundle~$\PP\omoduli[g]$, the tropical Hodge bundle~$\PP\omoduli[g]^{trop}$  admits a stratification by strata $\PP\omoduli[g]^{trop}(\mu)$ that are indexed by partitions $\mu$ of $2g-2$. 
\par
Theorem~\ref{thm:realizecrit} contains also a characterization of the
\emph{realizability locus $\PP \cRR_\Omega(\mu)$ of the stratum
  of type~$\mu$}, defined as the image of $\trop_\Omega |_{\PP \omoduli[g](\mu)^{an}}$
of the restriction of the tropicalization map to the corresponding
stratum $\PP \omoduli[g](\mu)^{an}$ of abelian differentials. In fact,
the proof of our main theorem applies verbatim to give the
following criterion.
\par
\begin{Prop}
An element $D = K_\Gamma + \div(f)$ in the tropical canonical
linear series lies in $\PP \cRR_\Omega(\mu)$ if and only if
$D$ is a divisor of type~$\mu$ and for the enhanced level graph $\Gamma^+(f)$
the two conditions i) and ii) of Theorem~\ref{thm:realizecrit} hold. \end{Prop}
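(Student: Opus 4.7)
The plan is to parallel the proof of Theorem~\ref{thm:realizecrit} with only minimal bookkeeping changes to track the stratum, relying on the fact that both Theorem~\ref{thm:bcggm} and Proposition~\ref{prop:ressurj} are already formulated for arbitrary types~$\mu$.

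For necessity, I would trace the specialization recipe of Section~\ref{sec:specialization}. If $(\Gamma,D) = \trop_\Omega(X,K_X)$ with $K_X$ of type $\mu$, then choosing a semistable model $\cXX/S$ on which the horizontal closure $\calD$ of $K_X$ is disjoint from the nodes of the special fiber, each of the $n$ distinct zeros of $K_X$ extends to a section hitting a smooth point of $\calX_s$ with its original multiplicity. Consequently $\mdeg(\calD_s) = \sum_{i=1}^n m_i \cdot [v_i]$ exhibits $D$ as a divisor of type $\mu$. The necessity of conditions i) and ii) is the ``only if'' direction of Theorem~\ref{thm:realizecrit}.

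For sufficiency, I would re-run the ``if'' direction of Theorem~\ref{thm:realizecrit} with one alteration to the graph-theoretic input. Instead of the principal-stratum enhancement of Lemma~\ref{le:fgivesLev}, in which every leg carries decoration $k=1$, I would decorate each leg at a vertex $v$ by the multiplicity $D(p)$ of the corresponding support point $p$ of $D$ lying on $v$. This assembles the local types $\mu(v)$ into the prescribed global type $\mu$. The residue prescription along the cycles $\gamma_i$ at inconvenient vertices and $\delta_j$ at horizontal edges, the verification of MRC and GRC, and the invocation of Proposition~\ref{prop:ressurj} then go through verbatim. The plumbing input to Theorem~\ref{thm:bcggm} is now a twisted differential of type $\mu$ rather than of principal type, so the resulting family has horizontal divisor $\sum_{i=1}^n m_i \Sigma_i$ of type $\mu$ and its generic fiber lies in $\PP\omoduli[g](\mu)^{an}$. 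The irrational-edge-length case reduces to the rational one by the same density/continuity argument as in Theorem~\ref{thm:realizecrit}, where the approximating rational divisors can be chosen of the same type $\mu$ and satisfying i) and~ii).

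The main obstacle I anticipate is ensuring that the ``inconvenient vertex'' bookkeeping is internally consistent: the inequality~\eqref{eq:micond} at a vertex $v$ depends on the multiplicities appearing in $\mu(v)$, which change when one passes from unit-multiplicity legs to legs carrying the higher multiplicities prescribed by $\mu$. However, the hypothesis i) in the statement is imposed with respect to exactly the $\mu$-adapted enhancement $\Gamma^+(f)$ that feeds into the plumbing, so the conditions match precisely the requirements of Proposition~\ref{prop:ressurj} at each vertex, and no further compatibility check is needed.
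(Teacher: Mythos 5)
Your overall plan coincides with the paper's, which gives no separate argument at all but simply asserts that the proof of Theorem~\ref{thm:realizecrit} applies verbatim. Your sufficiency direction spells out correctly what ``verbatim'' must mean: replace the unit legs of Lemma~\ref{le:fgivesLev} by legs decorated with the multiplicities $D(p)$, so that the local types $\mu(v)$, the inconvenience test~\eqref{eq:micond}, Proposition~\ref{prop:ressurj} and the plumbing input to Theorem~\ref{thm:bcggm} are all taken with respect to the honest type~$\mu$; the residue prescription, MRC/GRC check, and the density/continuity reduction then go through unchanged. This half of your argument is exactly the paper's intended proof.

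The necessity half, however, has a genuine gap. Citing ``the only if direction of Theorem~\ref{thm:realizecrit}'' only yields conditions i) and ii) for the unit-leg enhancement of Lemma~\ref{le:fgivesLev}, whereas the Proposition --- and your own sufficiency argument --- requires them for the $\mu$-adapted enhancement, in which inconvenience is tested with the actual entries $m_i$ in~\eqref{eq:micond}; this is strictly stronger whenever some $m_i\geq 2$. Concretely, on the dumbbell graph the divisor $D=2m$ with $m$ an interior point of the bridge satisfies the unit-leg conditions (it is realizable from the principal stratum), yet its $\mu$-adapted graph for $\mu=(2)$ has an inconvenient vertex on a bridge and is excluded from $\PP\cRR_\Omega(2)$ in Example~\ref{ex:dumbbell2}; the only-if direction of Theorem~\ref{thm:realizecrit} cannot detect this. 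What is needed is to re-run the necessity argument of Theorem~\ref{thm:realizecrit} \emph{without} the reduction ``by density of the principal stratum'', keeping the horizontal divisor $\sum_i m_i\Sigma_i$ of type~$\mu$, so that the scaling limits form a twisted differential of type~$\mu$ and the Stokes/GRC argument produces i) and ii) for the $\mu$-adapted $\Gamma^+(f)$. A second, related problem is your sentence that $\mdeg(\calD_s)=\sum_{i=1}^n m_i\cdot[v_i]$ ``exhibits $D$ as a divisor of type $\mu$'': this tacitly assumes the points $v_i$ are pairwise distinct, but distinct zeros can retract to the same point of $\Gamma$ --- this is precisely how the paper realizes non-reduced tropical divisors from the principal stratum --- so that step fails as stated, and the Proposition has to be read as a criterion for those $D$ which are already of type~$\mu$ (for such $D$ the matching of zero orders to support multiplicities is forced, and the corrected necessity argument above applies).
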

\par
\begin{Ex} \label{ex:dumbbell2}
{\rm We give for example the realizability locus $\PP \cRR_\Omega(2)$
if the underlying graph is the dumbbell graph, i.e.\ as a subset
of Example~\ref{ex:dumbbell}. Restricted to this graph, the tropical
Hodge bundle is disconnected and consists of (isolated) double zeros
at the midpoint of either of the dumbbell cycles and of a one-dimensional
component with double zero on the central edge of the dumbbell.
The midpoints of the dumbbell cycles satisfy the criteria i) and ii).
A point of multiplicity two in the interior of the the central edge
(as Figure~\ref{cap:DumbOK} on the right) does not satisfy criterion~i), since
a vertex with one zero (of order two) and two poles (of order two)
is inconvenient). However, if the double is located on the vertices
of the dumbbell, the vertices are no longer inconvenient and the
criteria are satisfied.
\par
In conclusion, the realizability locus $\PP \cRR_\Omega(2)$ on the
dumbbell graph consists of four 'Weierstrass' points as
in Figure~\ref{cap:realOm2}.
\begin{figure}[htb]
	\centering
	\begin{minipage}[t]{1\linewidth}
		\centering
		\begin{tikzpicture}[scale=2]
		\tikzstyle{every node}=[font=\normalsize]
		\fill[] (0,0) circle (1pt);	
		\draw (.3,0) ellipse (.3cm and .05cm); 
		\draw (1.65,0) ellipse (.3cm and .05cm); 
		\path[draw] (.6,0) [] -- ++(.75,0);
	    \fill[] (3.2,0) circle (1pt);	
	    \draw (2.9,0) ellipse (.3cm and .05cm); 
	    \draw (4.25,0) ellipse (.3cm and .05cm); 
	    \path[draw] (3.2,0) [] -- ++(.75,0);
		\fill[] (2.25,-.2) circle (1pt);	
		\draw (1.2,-.2) ellipse (.3cm and .05cm); 
		\draw (2.55,-.2) ellipse (.3cm and .05cm); 
		\path[draw] (1.5,-.2) [] -- ++(.75,0);    
	    \fill[] (5.45,-.2) circle (1pt);	
	    \draw (3.8,-.2) ellipse (.3cm and .05cm); 
	    \draw (5.15,-.2) ellipse (.3cm and .05cm); 
	    \path[draw] (4.1,-.2) [] -- ++(.75,0);
		\end{tikzpicture}
	\end{minipage}%
	\caption{The realizability locus $\PP \cRR_\Omega(2)$}
	\label{cap:realOm2}
\end{figure}
}\end{Ex}

\subsection{Algorithmic aspects}\label{sec:algo}

Our main theorem can be turned into an algorithm
to compute the simplicial structure of the realizability locus.
\begin{enumerate}[i)]
\item For each genus~$g$ construct the finitely many abstract
graphs $G=(E,V,L)$ with a genus function $h: V \to \ZZ_{>0}$
of genus~$g$ (in the sense of~\eqref{eq:genus}) and with $|L| = 2g-2$
that are stable.
\item There is a finite number of partial orders on~$G$ such
that any two vertices joined by an edge are comparable (equality
permitted). For each of those partial orders there is a finite
number of enhancements $k$ with the properties that $k(e^\pm)=-1$
for both half edges of $e$ joining $v_1$ and $v_2$ with $v_1 \asymp v_2$
and such that whenever there is an edge~$e$ joining $v_1$ and $v_2$ with
$v_1 \succ v_2$ then $k(e_1) \geq 0$, where $e_1$ is the half-edge of~$e$
adjacent to $v_1$.
\par
To see this, we attribute $k(e^\pm)=-1$ to all edges with $v_1 \asymp v_2$
and argue inductively top-down: for each vertex~$v$ such that
all the upward pointing half-edges have already been assigned an
enhancement, there is a finite number of possibilities to assign
a non-negative enhancement to each of  the downward pointing half-edges~$e^+$
such that the genus formula~\eqref{eq:genussum} holds. We complete
this enhancement on each of the complementary half edges~$e^-$
using the condition $k(e^+) + k(e^-) = -2$ and proceed to another vertex.
\item For each of the partial orders there is a finite number of full
orders that refine the partial order.
We assume for notational convenience that the full order is given
by the level function~$\ell$.
\item For each of the horizontal edges~$e$ (i.e.\ both half edges are
decorated with $k(e^\pm)=-1$) check if~$e$ disconnects the
graph $G_{\geq \ell(e)}$ and discard the graph if this is the case.
Here $\ell(e) := \ell(v)$ for any of the two vertices adjacent to~$e$.
\item Using the enhancement we can determine the set of inconvenient
vertices $I \subset V$. For each vertex $v \in I$ check if~$v$ disconnects
the graph $G_{\geq \ell(v)}$ and discard the graph if this is the case. 
\item The realizability locus consists of a cone $\sigma=\sigma_{(G,h,k,\ell)}$
for each tuple $(G,h,k,\ell)$ not discarded. The cone $\sigma_{(G,h,k,\ell)}$
parametrizes the following tropical curves. Assign
as in the proof of Lemma~\ref{le:dimcompute} a real number
$d_i \leq 0$ ('depth') to each level $i \in {\rm Lev}(G,\ell)$,
such that $d_0 = 0$ and $d_i < d_j$ if $i <j$.
Then endow
any edge~$e$ joining the vertices $v_1$ and $v_2$ with
$\ell(v_1) > \ell(v_2)$ with length
$$|e| \= (d_{\ell(v_1)} - d_{\ell(v_2)})/(k(e^+)+1)\,.$$
\end{enumerate}
This algorithm is effective but not efficient, since most of
the enhanced level graphs that are built in the process will
be discarded in the end.

\printbibliography


\end{document}